\newcommand{\D}{\displaystyle}
\newcommand{\al}{\alpha}
\newcommand{\e}{\epsilon}
\newcommand{\ot}{\overline{\theta}}
\newcommand{\dt}{\Delta\theta}
\newcommand{\ol}[1]{\overline{#1}}
\newcommand{\ho}[1]{\widehat{\overline{#1}}}
\newcommand{\myref}[1]{(\ref{#1})}
\newtheorem{theorem}{\textbf{Theorem}}[section]
\newtheorem{lemma}{\textbf{Lemma}}[section]
\newtheorem{remark}{\textbf{Remark}}[section]
\newtheorem{proposition}{\textbf{Proposition}}[section]
\newtheorem{corollary}{\textbf{Corollary}}[section]
\begin{document}

\title{Convergence of a data-driven time-frequency analysis method}

\author{Thomas Y. Hou%
\thanks{Applied and Comput. Math, Caltech, Pasadena, CA 91125. \textit{Email:
hou@cms.caltech.edu.}%
} \and Zuoqiang Shi%
\thanks{Mathematical Sciences Center, Tsinghua University, Beijing, China,
100084. \textit{Email: zqshi@math.tsinghua.edu.cn.}%
} \and Peyman Tavallali %
\thanks{Applied and Comput. Math, Caltech, Pasadena, CA 91125. \textit{Email:
ptavalla@caltech.edu.}%
}}

\maketitle

\begin{abstract}
In a recent paper \cite{HS12}, 
Hou and Shi introduced a new adaptive data analysis 
method to analyze nonlinear and non-stationary data. The main
idea is to look for the sparsest representation of multiscale
data within the largest possible dictionary consisting of
intrinsic mode functions of the form $\{ a(t) \cos(\theta(t))\}$, 
where $a \in V(\theta)$, $V(\theta)$ consists of the functions smoother
than $\cos(\theta(t))$ and $\theta'\ge 0$. This problem was 
formulated as a nonlinear $L^0$ optimization problem and
an iterative nonlinear matching pursuit method was proposed to
solve this nonlinear optimization problem. In this paper, we
prove the convergence of this nonlinear matching pursuit
method under some sparsity assumption on the signal. We consider
both well-resolved and sparse sampled signals. In the case without
noise, we prove that our method gives exact recovery of the original
signal.
\end{abstract}
\section{Introduction}

Developing a truly adaptive data analysis method is important for
our understanding of many natural phenomena. Although a number of
effective data analysis methods such as the Fourier transform or
windowed Fourier transform have been developed, these methods use
pre-determined
basis and are mostly used to process linear and stationary data.
Applications of these methods to nonlinear and nonstationary
data tend to give many unphysical harmonic modes. To overcome these
limitations of the traditional techniques, time-frequency analysis has
been developed by representing a signal with a joint function of both
time and frequency \cite{Flandrin99}. The recent advances of
wavelet analysis have led to the development of several powerful
wavelet-based time-frequency analysis techniques
\cite{JP90,Daub92,OW04,Mallat09}. But they still cannot remove
the artificial harmonics completely and do not give satisfactory 
results for nonlinear signals.

Another important approach in the time-frequency analysis is to
study instantaneous frequency of a signal. Some of the
pioneering work in this area was due to Van der Pol
\cite{VdP46} and Gabor \cite{Gabor46}, who introduced
the so-called Analytic Signal (AS) method that uses the
Hilbert transform to determine instantaneous frequency of
a signal. However, this method works mostly for monocomponent
signals in which the number of zero-crossings is equal to the
number of local extrema \cite{Boashash92c}. There were other attempts
to define instantaneous frequency such as the zero-crossing method
\cite{Rice44,Shekel53,Meville83} and the Wigner-Ville distribution
method \cite{Boashash92c,LWB93,QC96,Flandrin99,LT96,Picinbono97}.
Most of these methods suffer from various limitations. For example,
the zero-crossing method cannot be applied to study a signal with
multiple components and is sensitive to noise. On the other hand,
the methods based on the Wigner-Ville distribution suffer from the
interference between different components.

More substantial progress has been made recently with the introduction 
of the Empirical Mode Decomposition (EMD) method \cite{Huang98}. The 
EMD method decomposes a signal into a collection of intrinsic mode 
functions (IMFs) sequentially through a sifting process. On the other 
hand, since the EMD method relies on the information of local extrema 
of a signal, it is unstable to noise perturbation. Recently, an ensemble 
EMD method (EEMD) was proposed to make it more stable to noise 
perturbation \cite{WH09}. But some fundamental issues remain unresolved.

Inspired by EMD/EEMD and the recently developed compressive
sensing theory \cite{CRT06a,Candes-Tao06,Donoho06,BDE09}, Hou and
Shi proposed a data-driven time-frequency analysis method in a 
recent paper \cite{HS12}. The main idea of this method is to 
look for the sparsest decomposition of a signal over the largest 
possible dictionary consisting of the intrinsic mode functions (IMFs). 
The dictionary is chosen to be: 
\begin{eqnarray}
\mathcal{D}=\left\{ a\cos\theta:\; a\in V(\theta),\;\theta'\in V(\theta),\mbox{and}\;\theta'(t)\ge0,\forall t\in\mathbb{R}\right\} ,\label{dic-D}
\end{eqnarray}
 where $V(\theta)$ is a collection of all the functions that are
smoother than $\cos\theta(t)$. In general, it is most effective to
construct $V(\theta)$ as an overcomplete Fourier basis in the 
$\theta$-space. For periodic signals, we can simply choose $V(\theta)$ as
the standard Fourier basis in the $\theta$-space.
Then the problem can be reformulated as a nonlinear version of the
$L^0$ minimization problem. 
\begin{eqnarray}
\begin{array}{rcc}
\vspace{-2mm}P: & \mbox{Minimize} & M\\
\vspace{2mm} & {\scriptstyle (a_{k})_{1\le k\le M},(\theta_{k})_{1\le k\le M}}\\
 & \mbox{Subject to:} & \left\{ \begin{array}{l}
f=\sum_{k=1}^{M}a_{k}\cos\theta_{k},\\
a_{k}\cos\theta_{k}\in\mathcal{D},\;\quad k=1,\cdots,M.
\end{array}\right.
\end{array}
\end{eqnarray}
The constraint $f=\sum_{k=1}^{M}a_{k}\cos\theta_{k}$ can be replaced by 
an inequality when the signal is polluted by noise. 
This kind of optimization problem is known to be very challenging to solve
since both $a_k$ and $\theta_k$ are unknown.
Inspired by matching pursuit \cite{MZ93,TG07}, Hou and Shi \cite{HS12} proposed a nonlinear matching pursuit method to solve this nonlinear 
optimization problem. The basic idea is to decompose the signal 
sequentially into two parts, the mean plus a modulated oscillatory part with zero mean:
\begin{eqnarray}
  f=a_0+a_1\cos\theta ,
\end{eqnarray}
where the mean $a_0$, the envelope $a_1$, and the phase function 
$\theta$ are all unknown. We call $a_1\cos\theta$ an Intrinsic Mode 
Function (IMF). After this decomposition is completed, we can
treat $a_0$ as a new signal and repeat this process 
until the residual is small enough.

The objective of this paper is to analyze the convergence of the 
data-driven time-frequency analysis method 
proposed by Hou and Shi in \cite{HS12} for periodic signals.
We assume that the signal $f$ has a sparse representation over the Fourier basis in the $\theta$-space for some unknown $\theta$. The main objective
of our data-driven time-frequency analysis is to design an iterative 
algorithm to find such $\theta$. With a
given approximate phase function $\theta^n$, we solve 
a $l^1$ minimization problem to obtain the Fourier coefficients of $f$ in 
the $\theta^n$-space:
\begin{eqnarray}
\label{l1-intro}
  \min_x \|x\|_1,\quad \mbox{subject to}\quad \Phi_{\theta^n}x=f ,
\end{eqnarray}
where each column of matrix $\Phi_{\theta^n}$ is a Fourier basis in the
 $\theta^n$-space. We then use this coefficient $x$ to update $\theta^n$, 
and repeat this process until it converges. 

When the signal has sufficiently well-resolved samples, 
the $l^1$ optimization problem \myref{l1-intro} can 
be solved very efficiently by interpolation and Fast Fourier Transform (FFT). 
In this case, the constraint $\Phi_{\theta^n}x=f$ becomes a well-posed 
deterministic linear system
provided that the coefficient matrix $\Phi_{\theta^n}$ is invertible.
The linear optimization problem is then reduced to solving this linear system.
Since the matrix $\Phi_{\theta^n}$ consists of the 
Fourier basis in the $\theta^n$-space, the corresponding linear system 
can be solved approximately by first interpolating $f$ to a uniform mesh 
in the $\theta^n$-space and then applying FFT. This gives rise to a very 
efficient algorithm with complexity of order $O(N\log(N))$, where $N$ is the
number of sample points of the signal. Details of this algorithm will be given in 
Section \ref{section-resolve}.

Our first result is for well-resolved periodic signals of the form
$f(t) = a_0(t) + a_1(t) \cos\theta(t)$. We ignore the interpolation
error and assume that $f(t)$ is given for all $t \in [0,T]$. 
We further assume that the instantaneous frequency 
$\theta'(t)$ has a sparse representation in the Fourier basis 
in the physical space given by
$\left\{ e^{i2k\pi t/T}, \;\; |k| \leq M_0 \right\}$,
$a_0$ and $a_1$ have a sparse representation in the Fourier basis 
in the normalized $\theta$-space given by
$\left\{ e^{i2k\pi \bar{\theta}}, \;\; |k| \leq M_1 \right\}$,
where $\bar{\theta} = \frac{\theta(t) - \theta(0)}{\theta(T) - \theta(0)}$ is the normalized phase function. Then we can prove that the 
iterative algorithm will converge to the exact solution under some
mild scale separation assumption on the signal. More precisely,
if the initial guess of $\theta$ satisfies 
\begin{eqnarray}
\|\mathcal{F}\left(\left(\theta^{0}-\theta\right)'\right)\|_{1}\le\pi M_{0}/2,\;\label{cond-initial-intro}
\end{eqnarray}
where $\mathcal{F}$ is the Fourier transform in the physical space,
 then there exists $\eta_0>0$ such that
\begin{eqnarray}
\|\mathcal{F}\left(\left(\theta^{m+1}-\theta\right)'\right)\|_{1}\le\frac{1}{2}\left\Vert \mathcal{F}\left(\left(\theta^{m}-\theta\right)'\right)\right\Vert _{1},
\end{eqnarray}
provided that $L\ge \eta_0$, where $\eta_0$ is a constant determined by $M_0$, $M_1$ and $L=\frac{\theta(T)-\theta(0)}{2\pi}$.
We remark that $1/L$ is a measure of the smallest scale
of the signal. The scales of $a_0$, $a_1$, and $\theta$ are measured by 
$1/M_1$ and $1/M_0$ respectively. The requirement $L\ge \eta_0$ is 
actually a mathematical formulation of the scale separation property. 

The key idea of the proof is to estimate the decay rate of the coefficients over the 
Fourier basis in the $\theta^n$-space, where $\theta^n$ is the approximate phase 
function in each step. We show that the Fourier coefficients of the signal in the 
$\theta^n$-space have a very fast decay as long as that $\theta^n$ is a smooth function. 
Using this estimate, we can show that the error of the phase function in each step is
a contraction and the iteration converges to the exact solution.

In many problems, a signal may not has an exact sparse representation. A more general 
setting is that the Fourier coefficients of $a_0$, $a_1$, and $\theta'$ 
decay according to some power law as the wave number increases.
We can prove that in 
this case, our method will converge to an approximate solution with an error 
determined by the truncated error of $a_0$, $a_1$ and $\theta'$. 
The detailed analysis will be presented in Section \ref{section-resolve-stable}.

For signals with sparse samples, we can also prove similar convergence results with 
an extra condition on the matrix $\Phi_{\theta^n}$. In this case, we need to use the $l^1$ minimization even with periodic signals. Suppose
$S$ is the largest number such that $\delta_{3S}(\Phi_{\theta^{n}})+3\delta_{4S}(\Phi_{\theta^{n}})<2$.
Under the same sparsity assumption on the instantaneous frequency, the mean
and the envelope as before, we can prove that
there exist $\eta_L>0,\; \eta_S>0$, such that
\begin{eqnarray}
\|\mathcal{F}\left(\left(\theta^{m+1}-\theta\right)'\right)\|_{1}\le \frac{1}{2}\left\Vert \mathcal{F}\left(\left(\theta^{m}-\theta\right)'\right)\right\Vert _{1},
\end{eqnarray}
provided that $L\ge \eta_L$ and $S\ge \eta_S$.
Here the columns of the matrix consist of the Fourier basis in the 
$\theta^n$-space, $\delta_S(A)$ is the $S$-restricted isometry constant of 
matrix $A$ given in \cite{CT06a}.
Further, we show that if the 
sample points $\{t_j\}_{j=1}^{N_s}$ are selected at random from a set of 
uniformly distributed points $\{t_l\}_{j=1}^{N_f}$, the condition 
$\delta_{3S}(\Phi_{\theta^{n}})+3\delta_{4S}(\Phi_{\theta^{n}})<2$
holds with an overwhelming probability provided that
$ S\le C N_s/(\max(\ol{\theta}'(\log N_b )^6) $
and $N_f\ge \max \{ C\|\widehat{\ol{\theta}'}\|_{1}N_b,2M_{0}\}$,
 where $N_s$ is the number of the samples, $N_b$ is the number of the basis.
If $M_0=0$, which implies that $\ol{\theta}'=1$, then the
above result is reduced to the well-known theorem for the standard Fourier 
basis in \cite{CRT06b}. 

The rest of the paper is organized as follows. In Section 2, we 
establish the convergence and stability of our method for 
well-resolved signals. In Section 3, we propose an algorithm for signals 
with sparse samples and prove its convergence and stability. In Section 4,
some numerical results are presented to demonstrate the performance of
the algorithm and confirm the theoretical results. Some concluding 
remarks are made in Section 5.

\section{Well resolved periodic signal}
\label{section-resolve}

In this section, we will analyze the convergence and stability of the 
algorithm proposed in \cite{HS12} for well-resolved signals. By 
well-resolved signals, we mean that that these signals are measured over 
a uniform grid and can be interpolated to any grid with very little loss 
of accuracy. In the analysis, we assume that the signal is periodic in 
the sample domain. Without loss of generality, we assume that the 
signal $f$ is periodic over $[0,1]$.

In order to make this paper self-contained, we state the 
algorithm proposed in \cite{HS12}. The signal $f$ is given over a
uniform grid $t_j = j/N$ for $j=0,...,N-1$.
\begin{itemize}
\item $\theta_{k}^{0}=\theta_{0},\; n=0$. 
\item Step 1: Interpolate $r_{k-1}$ from the uniform grid in the time domain to a uniform mesh in the $\theta_{k}^{n}$-coordinate
to get $r_{\theta_{k}^{n}}^{k-1}$ and compute the Fourier transform
$\widehat{r}_{\theta_{k}^{n}}^{k-1}$: 
\begin{eqnarray}
r_{\theta_{k}^{n},\, j}^{k-1}=\mbox{Interpolate}\;\left(t_{i},r^{k-1},\theta_{k,\, j}^{n}\right),
\end{eqnarray}
 where $\theta_{k,\, j}^{n},\; j=0,\cdots,N-1$ are uniformly distributed
in the $\theta_{k}^{n}$-coordinate,i.e. $\theta_{k,\, j}^{n}=2\pi L_{\theta_{k}^{n}}\; j/N$.
Apply the Fourier transform to $r_{\theta_{k}^{n}}^{k-1}$ as follows:
\begin{eqnarray}
\widehat{r}_{\theta_{k}^{n}}^{k-1}(\omega)=\frac{1}{N}\sum_{j=1}^{N}r_{\theta^{n},\, j}^{k-1}e^{-i2\pi\omega\ol{\theta}_{k,\, j}^{n}},\quad\omega=-N/2+1,\cdots,N/2,
\end{eqnarray}
 where $\ol{\theta}_{k,\, j}^{n}=\frac{\theta_{k,\, j}^{n}-\theta_{k,\,0}^{n}}{2\pi L_{\theta_{k}^{n}}}$.
\item Step 2: Apply a cutoff function to the Fourier Transform of $r_{\theta_{k}^{n}}^{k-1}$
to compute $a$ and $b$ on the mesh in the $\theta_{k}^{n}$-coordinate,
denoted by $a_{\theta_{k}^{n}}$ and $b_{\theta_{k}^{n}}$: 
\begin{eqnarray}
a_{\theta_{k}^{n}} & = & \mathcal{F}_{\theta_{k}^{n}}^{-1}\left[\left(\widehat{r}_{\theta_{k}^{n}}^{k-1}\left(\omega+L_{\theta_{k}^{n}}\right)+\widehat{r}_{\theta_{k}^{n}}^{k-1}\left(\omega-L_{\theta_{k}^{n}}\right)\right)\cdot\chi\left(\omega/L_{\theta_{k}^{n}}\right)\right],\\
b_{\theta_{k}^{n}} & = & \mathcal{F}_{\theta_{k}^{n}}^{-1}\left[-i\cdot\left(\widehat{r}_{\theta_{k}^{n}}^{k-1}\left(\omega+L_{\theta_{k}^{n}}\right)-\widehat{r}_{\theta_{k}^{n}}^{k-1}\left(\omega-L_{\theta_{k}^{n}}\right)\right)\cdot\chi\left(\omega/L_{\theta_{k}^{n}}\right)\right],
\end{eqnarray}
where  $\mathcal{F}^{-1}$ is the inverse Fourier transform defined in the
$\theta_{k}^{n}$ coordinate: 
\begin{eqnarray}
\mathcal{F}_{\theta_k^n}^{-1}\left(\widehat{r}_{\theta_{k}^{n}}^{k-1}\right)=\frac{1}{N}\sum_{\omega=-N/2+1}^{N/2}\widehat{r}_{\theta_{k}^{n}}^{k-1}e^{i2\pi\omega\ol{\theta}_{k,\, j}^{n}},\quad j=0,\cdots,N-1,
\end{eqnarray}
 and $\chi$ is the cutoff function, 
\begin{eqnarray}
\chi(\omega)=\left\{ \begin{array}{cl}
1, & -1/2<\omega<1/2,\\
0, & \mbox{otherwise.}
\end{array}\right.\label{cutoff-jump}
\end{eqnarray}
 
\item Step 3: Interpolate $a_{\theta_{k}^{n}}$ and $b_{\theta_{k}^{n}}$
back to the uniform mesh in the time domain: 
\begin{eqnarray}
a_{k}^{n+1} & = & \mbox{Interpolate}\;\left(\theta_{k,\, j}^{n},a_{\theta_{k}^{n}},t_{i}\right),\quad i=0,\cdots,N-1,\\
b_{k}^{n+1} & = & \mbox{Interpolate}\;\left(\theta_{k,\, j}^{n},b_{\theta_{k}^{n}},t_{i}\right),\quad i=0,\cdots,N-1,.
\end{eqnarray}

\item Step 4: Update $\theta^{n}$ in the $t$-coordinate:
\begin{eqnarray}
\dt'=P_{V_{M_{0}}}\left(\frac{d}{dt}\left(\arctan\left(\frac{b_{k}^{n+1}}{a_{k}^{n+1}}\right)\right)\right),\;\dt(t)=\int_{0}^{t}\dt'(s)ds,\quad\theta^{n+1}=\theta^{n}+\beta\dt,
\nonumber
\end{eqnarray}
 where $\beta\in[0,1]$ is chosen to make sure that $\theta_{k}^{n+1}$
is monotonically increasing: 
\begin{eqnarray}
\beta=\max\left\{ \alpha\in[0,1]:\frac{d}{dt}\left(\theta_{k}^{n}+\alpha\dt\right)\ge0\right\} ,
\end{eqnarray}
 and $P_{V_{M_{0}}}$ is the projection operator to the space 
$V_{M_{0}}=\mbox{span}\left\{ e^{i2k\pi t/T},k=-M_{0},\cdots,0,\cdots,M_{0}\right\}$ and $M_0$ is chosen 
{\it a priori}.

\item Step 5: If $\|\theta_{k}^{n+1}-\theta_{k}^{n}\|_{2}<\epsilon_{0}$,
stop. Otherwise, set $n=n+1$ and go to Step 1. 
\end{itemize}

In the previous paper \cite{HS12}, we demonstrated that this algorithm works very effectively for periodic signals and is stable to noise 
perturbation. In this paper, we will analyze its convergence and stability.
Our main results can be summarized as follows. For periodic signals that have an exact sparsity structure, we can prove that the above algorithm will converge to the exact decomposition. For periodic signals that have an approximate sparsity structure, the above algorithm will give an approximate result withe accuracy determined by the truncated error of the signal. In the following two subsections, we will present these two results separately.

\subsection{Exact recovery}
\label{section-resolve-exact}

In this section, we consider a periodic signal $f(t)$ that has the
 following decomposition:
\begin{eqnarray}
f(t)=f_{0}(t)+f_{1}(t)\cos\theta(t),\; f_1(t)>0,\; \theta'(t)>0,\; t\in[0,1],
\end{eqnarray}
where $f_0,f_1$ and $\theta$ are the exact local mean, the envelope and
the phase function that we want to recover from the signal.

First, we introduce some notations.
 Let  $L=\frac{\theta(1)-\theta(0)}{2\pi}$ be
the number of period of the signal which is a measurement of the scale
of the signal. $\;\ol{\theta}=\frac{\theta-\theta(0)}{2\pi L}$
is the normalized phase function, which is used as a coordinate
in our numerical method and analysis. $\widehat{f}_{0,\theta}(k),\widehat{f}_{1,\theta}(k)$ are the Fourier
coefficients of $f_{0},f_{1}$ in the $\ol{\theta}$-coordinate, i.e.
\begin{eqnarray}
\widehat{f}_{0,\theta}(k)=\int_{0}^{1}f_{0}\; e^{-i2\pi k\ol{\theta}}d\ol{\theta},\quad\widehat{f}_{1,\theta}(k)=\int_{0}^{1}f_{1}\; e^{-i2\pi k\ol{\theta}}d\ol{\theta},
\end{eqnarray}
We also use the notation $\mathcal{F}_{\theta}(\cdot)$ to represent the Fourier transform in the $\theta$-space and $\mathcal{F}(\cdot)$ to represent the Fourier transform in the original $t$-coordinate.

Now we can state the theorem as follows:
\begin{theorem} \label{theorem-sparse}
Assume that the instantaneous frequency $\theta'$ is $M_0$-sparse over the Fourier basis in the physical space,
i.e. 
\begin{eqnarray}
\theta'\in V_{M_{0}}=\mbox{span}\left\{ e^{i2k\pi t/T},k=-M_{0},\cdots,1,\cdots,M_{0}\right\} .
\end{eqnarray}
Further, we assume that the
local mean $f_{0}$ and the envelope $f_{1}$ are $M_1$-sparse over the Fourier basis in the $\ol{\theta}$-space, i.e.
\begin{eqnarray}
\widehat{f}_{0,\theta}(k)=\widehat{f}_{1,\theta}(k)=0,\quad\forall|k|>M_{1}.
\end{eqnarray}
If the initial guess of $\theta^0$ satisfies 
\begin{eqnarray}
\|\mathcal{F}\left(\left(\theta^{0}-\theta\right)'\right)\|_{1}\le\pi M_{0}/2,\;\label{cond-initial}
\end{eqnarray}
 then there exist $\eta_0>0$ such that
\begin{eqnarray}
\|\mathcal{F}\left(\left(\theta^{m+1}-\theta\right)'\right)\|_{1}\le\frac{1}{2}\left\Vert \mathcal{F}\left(\left(\theta^{m}-\theta\right)'\right)\right\Vert _{1},
\end{eqnarray}
provided that $L\ge \eta_0$.
\end{theorem}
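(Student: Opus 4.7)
The plan is to track the phase error $\Delta^n = \theta^n - \theta$ across one iteration and to show that, up to corrections that vanish as $L\to\infty$, the algorithm realizes an approximate Newton step toward $\Delta=0$. I begin by re-expressing the signal in the current coordinate as
\[ f = f_0 + (f_1\cos\Delta^n)\cos\theta^n + (f_1\sin\Delta^n)\sin\theta^n, \]
so in the $\ol{\theta^n}$-coordinate the three envelopes $f_0$, $f_1\cos\Delta^n$ and $f_1\sin\Delta^n$ play the role of low-frequency modulators attached to the carriers $1$, $\cos\theta^n$ and $\sin\theta^n$ (which sit at frequencies $0$ and $\pm L_{\theta^n}$ in the $\ol{\theta^n}$-space). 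The cartoon is that Steps 1--3 filter out exactly these envelopes, giving $\arctan(b_k^{n+1}/a_k^{n+1})=-\Delta^n$, and then Step 4 produces $\Delta\theta\approx-\Delta^n$; the theorem's contraction factor $1/2$ measures how far the actual filter departs from this cartoon.

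The core analytic step is a Fourier-decay estimate in the $\ol{\theta^n}$-coordinate. I would show that for each $g\in\{f_0,\;f_1\cos\Delta^n,\;f_1\sin\Delta^n\}$, the Fourier coefficients of $g$ with respect to $\{e^{i2\pi k\ol{\theta^n}}\}$ are concentrated, up to rapidly decaying tails, in a band $|k|\le M_1+C\|\mathcal{F}((\Delta^n)')\|_1$. Two ingredients feed into this. First, the change of variables $\ol{\theta}\mapsto\ol{\theta^n}$ is smooth because $\Delta^n$ is controlled through its $\ell^1$-Fourier norm and $(\Delta^n)'\in V_{M_0}$; an $M_1$-bandlimited function in $\ol{\theta}$ therefore has Fourier tails in $\ol{\theta^n}$ bounded by convolution powers of $\widehat{\Delta^n}$. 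Second, $\cos\Delta^n$ and $\sin\Delta^n$ admit convergent Taylor series in $\Delta^n$ which I expand and bound term-by-term using the same convolution estimate, summing a geometric series that converges thanks to the smallness hypothesis $\|\mathcal{F}((\Delta^n)')\|_1\le\pi M_0/2$.

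Given this decay estimate, the sharp cutoff of half-width $L_{\theta^n}/2$ centered at $\pm L_{\theta^n}$ cleanly separates the low-frequency envelopes from their carriers whenever $L_{\theta^n}>2(M_1+CM_0)$, i.e.\ whenever $L\ge\eta_0$. This yields $a_k^{n+1}=f_1\cos\Delta^n+E_a$ and $b_k^{n+1}=-f_1\sin\Delta^n+E_b$ with error Fourier-norms bounded by a small multiple of $\|\mathcal{F}((\Delta^n)')\|_1$. Because $f_1>0$ and $|\Delta^n|$ is small, $\arctan(b_k^{n+1}/a_k^{n+1})=-\Delta^n+R^n$ with $R^n$ inheriting the same bound. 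Differentiating and using that $\theta'\in V_{M_0}$, while an induction (seeded by choosing $\theta^0$ with $(\theta^0)'\in V_{M_0}$) keeps $(\theta^n)'\in V_{M_0}$ at each step so that $P_{V_{M_0}}((\Delta^n)')=(\Delta^n)'$, I obtain $(\Delta\theta)'=-(\Delta^n)'+P_{V_{M_0}}((R^n)')$. Since $\|(\Delta^n)'\|_\infty\le\|\mathcal{F}((\Delta^n)')\|_1\le\pi M_0/2$ is dominated by $\min\theta'\sim L$, the monotonicity step-length satisfies $\beta=1-O(M_0/L)$, so the residual after one iteration has $\ell^1$-Fourier norm bounded by $\tfrac12\|\mathcal{F}((\Delta^n)')\|_1$ once $L\ge\eta_0$. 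A short induction preserves the initial smallness hypothesis at every step.

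The main obstacle is the Fourier-tail bound for $f_1\cos\Delta^n$ and $f_1\sin\Delta^n$ in the $\ol{\theta^n}$-coordinate when $\Delta^n$ is controlled only through the single scalar $\|\mathcal{F}((\Delta^n)')\|_1$. Translating this scalar bound into genuine spectral decay requires combining the change-of-variables analysis with the Taylor expansion carefully, so that the several error sources (tails of $f_0$ and $f_1$ leaking into the cutoff window, tails of $\cos\Delta^n$ and $\sin\Delta^n$, and the $V_{M_0}$-projection error) can all be absorbed into the single contraction factor $1/2$ under the scale-separation condition $L\ge\eta_0$.
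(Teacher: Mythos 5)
Your overall architecture coincides with the paper's: the same rewriting $f=f_0+(f_1\cos\Delta)\cos\theta^n+(f_1\sin\Delta)\sin\theta^n$, the same role of the band-pass cutoff around $\pm L_{\theta^n}$, the same arctan linearization, projection to $V_{M_0}$, and induction preserving the smallness hypothesis. The genuine gap is that the one step carrying all the analytic weight --- the bound on the $\ol{\theta}^n$-space Fourier coefficients of $f_0$, $f_1\cos\Delta^n$, $f_1\sin\Delta^n$ at frequencies $|\omega|\gtrsim L/2$, with a bound \emph{proportional to} $\|\mathcal{F}((\Delta^n)')\|_1$ --- is asserted rather than proved, and you yourself flag it as ``the main obstacle.'' The paper supplies exactly this ingredient as Lemma \ref{lemma-fft}: an $n$-fold integration by parts in the $\ol{\theta}^n$ variable applied to $\int_0^1 e^{i\psi}e^{-i2\pi\omega\ol{\theta}^n}d\ol{\theta}^n$ with $\psi=k\Delta\theta^m/L$, where periodicity kills the boundary terms, the bound decays like $|\omega|^{-n}$, and crucially every term carries at least one power of $\|\widehat{\psi'}\|_1$, which is what makes the filtering error linear in the current phase error and hence yields a strict contraction (not merely convergence to a small neighborhood). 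Your sketched substitute does not deliver this: ``convolution powers of $\widehat{\Delta^n}$'' live in the $t$-Fourier domain (where $(\Delta^n)'\in V_{M_0}$), whereas the coefficients you need are with respect to $\{e^{i2\pi k\ol{\theta}^n}\}$, with non-integer carrier locations $\al k$ ($\al=L^m/L\neq 1$) and the measure $d\ol{\theta}^n=(\ol{\theta}^n)'dt$; translating band-limitation in $t$ into decay in the $\ol{\theta}^n$-spectrum is precisely the oscillatory-integral estimate you have not provided, and its constants ($\min(\ol{\theta}^n)'$, $\|\mathcal{F}[(\ol{\theta}^n)']\|_1$, the arctan denominators $D_1,D_2$, and $\al$) must additionally be shown uniformly bounded along the iteration, a part of the paper's proof your plan omits.

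A second, more local problem is the claimed convergence mechanism for the Taylor expansion of $\cos\Delta^n$ and $\sin\Delta^n$: the hypothesis \myref{cond-initial} bounds $\|\mathcal{F}((\Delta^n)')\|_1$ by $\pi M_0/2$, which is of size $M_0$, not small; the actual smallness in the paper is $\gamma=\|\mathcal{F}((\Delta^n)')\|_1/(2\pi M_0)\le 1/4$. Term-by-term bounds of your series therefore behave like $\sum_j(cM_0)^j/j!$, an exponential-in-$M_0$ quantity rather than a small-ratio geometric series; this is not fatal (the losses could be absorbed into $\eta_0$, which is allowed to depend on $M_0$ and $M_1$), but the convergence argument as you state it is incorrect, and even after repairing it you would still need the Lemma \ref{lemma-fft}-type estimate to obtain the $|\omega|^{-n}$ decay at the cutoff scale $L/2$ and the factor of $\gamma$ that drives the $1/2$ contraction.
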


Before giving the rigorous proof, we introduce some notations for the convenience of the representation. 
Let $\theta^m$ be the approximate phase function in the $m$th step, 
and $\Delta\theta^m=\theta-\theta^m$ be the error of the phase function 
in the current step. 
Let $\widetilde{a}^m$, $\widetilde{b}^m$ be the approximate envelope functions, which are obtained by using the algorithm in Step 3. Further, we define $a^m=f_1\cos\dt^m$, $b^m=f_1\sin\dt^m$, and
$\Delta a^m=a^m-\widetilde{a}^m$ , $\Delta b^m=b^m-\widetilde{b
}^m$. The quantities $a^m$ and $b^m$ can be considered as the ``exact'' 
envelope functions at the $m$th iteration since 
$\Delta\theta^m=\arctan\left(\frac{b^m}{a^m}\right)$. Thus,
we would obtain the exact phase starting from $\theta^m$ in one 
iteration. In our analysis, we need to establish a relation 
among $\Delta a^m$, $\Delta b^m$ and $\dt^m$.

One key ingredient of the proof is to estimate the integral $\int_{0}^{1}e^{i2\pi (\omega\ot-k\ot^m)}d\ot^m$. 
Fortunately, for this type of integral, we have the following lemma.
\begin{lemma} 
\label{lemma-fft}
Suppose $\phi'(t)>0,\; t\in[0,1]$, $\phi(0)=0,\;\phi(1)=1$,
and $\psi',\phi'\in V_{M_{0}}=\mbox{span}\left\{ e^{i2k\pi t},k=-M_{0},\cdots,1,\cdots,M_{0}\right\} $.
Then we have, 
\begin{eqnarray}
\left|\int_{0}^{1}e^{i\psi}e^{-i2\pi\omega\phi}d\phi\right|\le\frac{P\left(\frac{\|\widehat{\phi'}\|_{1}}{\min\phi'},n\right)
M_{0}^{n}}{|\omega|^{n}\left(\min\phi'\right)^{n}}\sum_{j=1}^{n}(2\pi M_{0})^{-j}\|\widehat{\psi'}\|_{1}^{j} ,
\end{eqnarray}
 provided that $e^{i\psi}e^{-i2\pi\omega\phi}$ is a periodic function. Here $P(x,n)$ is a $(n-1)$th order polynomial of $x$ and the coefficients 
also depend on $n$.
\end{lemma}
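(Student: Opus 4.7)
The plan is to prove the bound via $n$ iterations of integration by parts, exploiting the rapid oscillation carried by $e^{-i2\pi\omega\phi}$, and then to control the resulting integrand pointwise using Bernstein-type inequalities that follow from $\psi',\phi'\in V_{M_0}$.

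First I would rewrite the integral in $t$-coordinates as $\int_{0}^{1}e^{i\psi(t)}e^{-i2\pi\omega\phi(t)}\phi'(t)\,dt$ and use the identity $e^{-i2\pi\omega\phi(t)}\phi'(t)=-\tfrac{1}{2\pi i\omega}\tfrac{d}{dt}e^{-i2\pi\omega\phi(t)}$. A single integration by parts, whose boundary term vanishes by the assumed periodicity of $e^{i\psi}e^{-i2\pi\omega\phi}$, yields the factor $1/(2\pi i\omega)$ and replaces $e^{i\psi}$ by its derivative; inserting $\phi'/\phi'$ restores the measure $d\phi$ so that the step can be iterated. Setting $Tu:=(1/\phi')u'$, one obtains after $n$ iterations the identity $\int_{0}^{1}e^{i\psi}e^{-i2\pi\omega\phi}\,d\phi=(2\pi i\omega)^{-n}\int_{0}^{1}T^{n}(e^{i\psi})\,e^{-i2\pi\omega\phi}\,d\phi$.

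The second and main step is a pointwise bound on $T^{n}(e^{i\psi})$. By the product rule, induction shows $T^{n}(e^{i\psi})=R_{n}(t)\,e^{i\psi(t)}$ with $R_{0}=1$ and the recurrence $R_{n+1}=(R_{n}'+i\psi' R_{n})/\phi'$. Expanding this recurrence, every term of $R_{n}$ is a monomial of the form $c\,\prod_{s=1}^{p}\psi^{(i_{s}+1)}\prod_{r=1}^{q}\phi^{(j_{r}+1)}/(\phi')^{k}$, and an induction on $n$ establishes the invariants $1\le p\le n$, $0\le q\le n-p$, $k=n+q$, and $\sum_{s}i_{s}+\sum_{r}j_{r}=n-p$. (These are proved by tracking the three mutually exclusive contributions of the recurrence: multiplying by $i\psi'$, which bumps $p$ by one; differentiating an existing $\psi$- or $\phi$-factor, which bumps the derivative total by one; or differentiating the denominator $(\phi')^{-k}$, which injects a new $\phi''$-factor, raising $q$ by one and $k$ by two.) The Bernstein-type bounds $|\psi^{(i+1)}|\le(2\pi M_{0})^{i}\|\widehat{\psi'}\|_{1}$ and $|\phi^{(j+1)}|\le(2\pi M_{0})^{j}\|\widehat{\phi'}\|_{1}$, which are immediate from $\psi',\phi'\in V_{M_{0}}$, then control each monomial by $(2\pi M_{0})^{n-p}\|\widehat{\psi'}\|_{1}^{p}\|\widehat{\phi'}\|_{1}^{q}/(\min\phi')^{n+q}$. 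Grouping these contributions by $p\in\{1,\dots,n\}$ and absorbing the $q$-sum into an $(n-1)$-degree polynomial $P$ in $\|\widehat{\phi'}\|_{1}/\min\phi'$ produces $|T^{n}(e^{i\psi})|\le P(\|\widehat{\phi'}\|_{1}/\min\phi',n)(2\pi M_{0})^{n}(\min\phi')^{-n}\sum_{j=1}^{n}(2\pi M_{0})^{-j}\|\widehat{\psi'}\|_{1}^{j}$, and combining with the IBP identity (using $|e^{-i2\pi\omega\phi}|\equiv 1$, $\int_{0}^{1}d\phi=1$, and $(2\pi)^{-n}(2\pi M_{0})^{n}=M_{0}^{n}$) delivers the lemma.

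The main obstacle is the inductive book-keeping of the monomial invariants, most delicately the identity $k=n+q$ together with the derivative accounting $\sum_{s}i_{s}+\sum_{r}j_{r}=n-p$. The subtle case is when $R_{n}'$ hits the denominator $(\phi')^{-k}$: this single operation injects a fresh $\phi''$-factor, raises $q$ by one, and raises $k$ by one before the outer division by $\phi'$ raises $k$ once more, so all three effects must be tracked simultaneously to preserve the invariants. Once these are established, the Bernstein estimates and the final regrouping of sums are routine.
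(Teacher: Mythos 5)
Your proposal is correct and follows essentially the same route as the paper: $n$-fold integration by parts in the $\phi$-variable (boundary terms vanishing by periodicity), followed by a pointwise bound on $\frac{d^{n}}{d\phi^{n}}e^{i\psi}$ via Bernstein-type estimates for $\psi',\phi'\in V_{M_{0}}$. Your inductive bookkeeping of the monomials in $T^{n}(e^{i\psi})$ simply makes explicit what the paper dismisses as ``direct calculations,'' and it reproduces the paper's $n=2$ computation as a special case.
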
 
\begin{remark} Regarding the polynomial $P(x,n)$, we can get an explicit expression for small $n$. For example, when $n=2$, we have
\begin{eqnarray}
\left|\frac{d^{2}}{d\phi^{2}}e^{i\psi}\right| & = & \left|i\left(\frac{\psi''}{\phi'^{2}}-\frac{\psi'\phi''}{\phi'^{3}}+
i\frac{\psi'^{2}}{\phi'^{2}}\right)e^{i\psi}\right|\le\left|\frac{\psi''}{\phi'^{2}}\right|+\left|\frac{\psi'\phi''}{\phi'^{3}}\right|+\left|\frac{\psi'^{2}}{\phi'^{2}}\right|\nonumber \\
 & \le & \frac{\max|\psi''|}{\left(\min\phi'\right)^{2}}+\frac{\max|\psi'|\max |\phi''|}{\left(\min\phi'\right)^{3}}+\frac{\left(\max|\psi'|\right)^{2}}{\left(\min\phi'\right)^{2}}\nonumber \\
 & \le & \frac{1}{\left(\min\phi'\right)^{2}}\left[\left(1+\frac{\|\widehat{\phi'}\|_{1}}{\min\phi'}\right)2\pi M_{0}\|\widehat{\psi'}\|_{1}+\|\widehat{\psi'}\|_{1}^{2}\right],
\end{eqnarray}
where we have used $\dt,\ot\in V_{M_{0}}$ in deriving the last inequality.
Then, we have $P(x,2)=x+1$. Similarly, we can also get $P(x,3)=3x^2+4x+3$. 
\end{remark}
\begin{remark}
  Lemma \ref{lemma-fft} is valid for any $n\in \mathbb{N}$. The integral 
that we would like to estimate in Lemma \ref{lemma-fft}
is actually the Fourier
transform of $e^{i\psi}$. Since $\psi$ is a smooth function, we expect 
that the Fourier transform of $e^{i\psi}$ has a rapid decay 
for $|\omega|$ large. In Lemma \ref{lemma-fft}, we give a more 
delicate decay estimate of the Fourier transform of $e^{i\psi}$. 
Such estimate is required in our proof of Theorem \ref{theorem-sparse}.
\end{remark}
\begin{proof}
Using integration by parts, then we have 
\begin{eqnarray}
 \left|\int_{0}^{1}e^{i\psi}e^{-i2\pi\omega\phi}d\phi\right|
 = \frac{1}{|2\pi\omega|^{n}}\left|\int_{0}^{1}\frac{d^{n}(e^{i\psi})}{d\phi^{n}}e^{-i2\pi\omega\phi}d\phi\right|
 \le \frac{1}{|2\pi\omega|^{n}}\max_{t\in [0,1]}\left|\frac{d^{n}(e^{i\psi})}{d\phi^{n}}\right|\nonumber.
\end{eqnarray}
Since $e^{i\psi}e^{-i2\pi\omega\phi}$ is periodic, there is no 
contribution from the boundary terms when performing integration by parts. 
 Using the fact that, $\psi',\phi'\in V_{M_{0}}$ and $\forall g\in V_{M_{0}}$, we obtain
\begin{eqnarray}
\max_{t}|g^{(n)}(t)| & \le & \sum_{k}|(2\pi k)^{n-1}\widehat{g'}(k)|\le(2\pi M_{0})^{n-1}\sum_{k}|\widehat{g'}(k)|=(2\pi M_{0})^{n-1}\|\widehat{g'}\|_{1}.\label{control-g}
\end{eqnarray}

Direct calculations give
\begin{eqnarray}
\left|\frac{d^{n}(e^{i\psi})}{d\phi^{n}}\right|\le
\frac{P\left(\frac{\|\widehat{\phi'}\|_{1}}{\min\phi'},n\right)}
{\left(\min\phi'\right)^{n}}\sum_{j=1}^{n}(2\pi M_{0})^{n-j}\|\widehat{\psi'}\|_{1}^{j}.
\end{eqnarray}
Thus, we get 
\begin{eqnarray}
\left|\int_{0}^{1}e^{i\psi}e^{-i2\pi \omega\phi}d\phi\right| 
 & \le& \frac{P\left(\frac{\|\widehat{\phi'}\|_{1}}{\min\phi'},n\right)
M_{0}^{n}}{|\omega|^{n}\left(\min\phi'\right)^{n}}\sum_{j=1}^{n}(2\pi M_{0})^{-j}\|\widehat{\psi'}\|_{1}^{j}.
\end{eqnarray}
This proves Lemma \ref{lemma-fft}.
 \end{proof}

Now we are ready to prove Theorem \ref{theorem-sparse}.
\begin{proof} \textit{of Theorem \ref{theorem-sparse}}
 
First, we need to establish the relation between $\Delta\theta^{m+1}$ and $\Delta a^m$, $\Delta b^m$. 

Recall that $\Delta\theta^m=\arctan\left(\frac{b^m}{a^m}\right)$. Thus, 
we have 
$\widetilde{\dt}=\dt^m-\arctan\left(\frac{\widetilde{b}^m}{\widetilde{a}^m}\right)=
\arctan\left(\frac{b^m}{a^m}\right)-\arctan\left(\frac{\widetilde{b}^m}{\widetilde{a}^m}\right)$. 
Using the differential mean value theorem, we know that there exists $\xi\in[0,1]$, such that
\begin{eqnarray}
\label{est-dt}
\left|\widetilde{\dt}\right| & = & \left|\arctan\left(\frac{b^m}{a^m}\right)-
\arctan\left(\frac{\widetilde{b}^m}{\widetilde{a}^m}\right)\right|=
\left|\frac{(a^m+\xi\Delta a^m)\Delta b^m-(b^m+\xi\Delta b^m)\Delta a^m}{(a^m+\xi\Delta a^m)^{2}
+(b^m+\xi\Delta b^m)^{2}}\right|\nonumber \\
 & \le & \frac{(|a^m|+|\Delta a^m|)|\Delta b^m|+(|b^m|+|\Delta b^m|)|\Delta a^m|}
{((a^m)^{2}+(b^m)^{2})/2-((\Delta a^m)^{2}+(\Delta b^m)^{2})}\nonumber \\
 & \le & D_{1}|\Delta a^m|+D_{2}|\Delta b^m|,
\end{eqnarray}
 where 
\begin{eqnarray}
D_{1}=\max_{t}\left\{ \frac{f_1+|\Delta b^m|}{f_1^2/2-
((\Delta a^m)^{2}+(\Delta b^m)^{2})}\right\},\;
\D D_{2}=\max_{t}\left\{ \frac{f_1+|\Delta a^m|}{f_1^2/2
-((\Delta a^m)^{2}+(\Delta b^m)^{2})}\right\},\quad
\end{eqnarray}
and we have used the relations that $f_1^2=(a^m)^{2}+(b^m)^{2}$ and $|a^m|, |b^m|\le f_1$.

In the algorithm, there is another smooth process when updating $\theta$, which gives the following result for $\Delta \theta^{m+1}$,
\begin{eqnarray}
\dt^{m+1}=2\pi\Delta L^{m+1} t+\widetilde{\dt}_{p,M_{0}},
\label{delta-theta-decomp}
\end{eqnarray}
 where $\widetilde{\dt}_{p,M_{0}}=P_{V_{M_{0}}}\left(\widetilde{\dt}_{p}\right)$ is the projection of $\widetilde{\dt}_p$ 
over the space $V_{M_0}$, 
 $\widetilde{\dt}_p$ and $2\pi\Delta L^{m+1} t$ are the periodic part and 
the linear part of $\widetilde{\dt}$ respectively:
\begin{eqnarray}
\widetilde{\dt}=2\pi\Delta L^{m+1}t+\widetilde{\dt}_{p}.
\end{eqnarray}
Using \myref{delta-theta-decomp}, we can estimate $(\dt^{m+1})'$ as follows, 
\begin{eqnarray}
\label{est-dt-l1-ori}
 &  & \left\Vert \mathcal{F}\left((\Delta\theta^{m+1})'\right)\right\Vert _{1}\le2\pi\Delta L^{m+1}+\left\Vert \widehat{\widetilde{\dt}'}_{p,M_{0}}\right\Vert _{1}\le2\pi\Delta L+M_{0}\left\Vert \widehat{\widetilde{\dt}}_{p,M_{0}}\right\Vert _{1}\nonumber \\
 & \le & 2\|\widetilde{\dt}\|_{\infty}+M_{0}^{2}\left\Vert \widetilde{\dt}_{p}\right\Vert _{\infty}\le(3M_{0}^{2}+2)\|\widetilde{\dt}\|_{\infty}.
\end{eqnarray}
where  we have used the fact that
\begin{eqnarray}
2\pi|\Delta L^{m+1}| & = & |\widetilde{\dt}(1)-\widetilde{\dt}(0)|\le2\|\widetilde{\dt}\|_{\infty},\\
\left\Vert \widetilde{\dt}_{p}\right\Vert _{\infty} & = & \left\Vert \widetilde{\dt}\right\Vert _{\infty}+2\pi\Delta L\le3\left\Vert \widetilde{\dt}\right\Vert _{\infty} .
\end{eqnarray}
Combining \myref{est-dt-l1-ori} with \myref{est-dt}, we get
\begin{eqnarray}
\label{est-dt-l1-0}
\left\Vert \mathcal{F}\left((\Delta\theta^{m+1})'\right)\right\Vert _{1}\le (3M_{0}^{2}+2)\left(D_{1}\|\Delta a^m\|_\infty+D_{2}\|\Delta b^m\|_\infty\right) .
\end{eqnarray}

Next, we will establish the relation among $\Delta a^m$, $\Delta b^m$ and $\dt^m$. This can be done by estimating the Fourier coefficients of 
$\ol{a}^m$, $\ol{b}^m$ in the $\theta^m$-space.

In Appendix A, we derive the following estimates of $\Delta a^m$ and $\Delta b^m$  (see \myref{exp-error-a}, \myref{exp-error-b}),
\begin{eqnarray}
\label{est-da-0}
|\Delta a^m|
 \le 2\sum_{\frac{1}{2}L^m<k<\frac{3}{2}L^m}\left|\widehat{f}_{0,\theta^m}(k)\right|+\sum_{\frac{3}{2} L^m<k<\frac{5}{2} L^m}\left(\left|\widehat{a}_{\theta^m}(k)\right|+
\left|\widehat{b}_{\theta^m}(k)\right|\right)
+\sum_{|k|>\frac{L^m}{2}}\left|\widehat{a}_{\theta^m}(k)\right|,\quad\\
\label{est-db-0}
|\Delta b^m|\le 2\sum_{\frac{1}{2}L^m<k<\frac{3}{2}L^m}\left|\widehat{f}_{0,\theta^m}(k)\right|+\sum_{\frac{3}{2} L^m<k<\frac{5}{2} L^m}\left(\left|\widehat{a}_{\theta^m}(k)\right|
+\left|\widehat{b}_{\theta^m}(k)\right|\right)
+\sum_{|k|>\frac{L^m}{2}}\left|\widehat{b}_{\theta^m}(k)\right|,\quad
\end{eqnarray}
where $\widehat{f}_{0,\theta^m}$, $\widehat{a}^m_{\theta^m}$ and $\widehat{b}^m_{\theta^m}$ are the Fourier transform of $f_0$, $a^m$ and $b^m$ in
the $\theta^m$-space.

To obtain the desired estimates, we need to use Lemma \ref{lemma-fft} to 
estimate the Fourier coefficients of $f_0,\; a^m, b^m$ in the 
$\theta^m$-space. In an effort to make the proof concise and easy to 
follow, we defer the derivation of the estimates \myref{est-a0},
\myref{est-a} and \myref{est-b} to Appendix B.
The main results of Appendix B are summarized as follows.
As long as $\gamma=\frac{\|\mathcal{F}[(\Delta \theta^m)']\|_{1}}{2\pi M_{0}}\le 1/4$, we have
\begin{eqnarray}
\label{est-a0}
|\widehat{f}_{0,\theta^m}(\omega)| 
&\le &  C_{0}Q\left(\frac{|\omega|}{2}\right)^{-n}M_{0}^{n}M_{1}\gamma,\quad \forall |\omega|>L/2
\end{eqnarray}
\begin{eqnarray}
\label{est-a}
|\widehat{a}^m_{\theta^m}(\omega)| & \le & 4C_{0}Q\left(\frac{|\omega|}{2}\right)^{-n}M_{0}^{n}(2M_{1}+1)\gamma, \quad \forall |\omega|\ge L/2.
\end{eqnarray}
\begin{eqnarray}
\label{est-b}
|\widehat{b}^m_{\theta^m}(\omega)| & \le & 4C_{0}Q\left(\frac{|\omega|}{2}\right)^{-n}M_{0}^{n}(2M_{1}+1)\gamma, \quad \forall |\omega|\ge L/2.
\end{eqnarray}
 where 
\begin{eqnarray}
\label{def-Q}
Q=\frac{P\left(z,n\right)}{\left(\min(\ot^m)'\right)^{n}},\quad z=\frac{\|\mathcal{F}[(\ot^m)']\|_{1}}{\min(\ot^m)'},\quad  
\gamma=\frac{\|\mathcal{F}[(\Delta \theta^m)']\|_{1}}{2\pi M_{0}}.
\end{eqnarray}

Using \myref{est-da-0}-\myref{est-b} and the fact that 
$\sum_{k=1}^{\infty}k^{-n}$ converges as long as $n\ge 2$, we conclude that
\begin{eqnarray}
\label{est-da-inf}
|\Delta a^m|&\le& \Gamma_{0}Q(\al L)^{-n+1}\gamma,\\
\label{est-db-inf}
|\Delta b^m|&\le& \Gamma_{0}Q(\al L)^{-n+1}\gamma,
\end{eqnarray}
where $\Gamma_0$ is a constant that depends on $M_0,M_1$ and $n$.
It follows from \myref{est-dt-l1-0}, \myref{def-Q}, \myref{est-da-inf} and
\myref{est-db-inf} that 
\begin{eqnarray}
\label{est-dt-l1-1}
 \left\Vert \mathcal{F}\left((\Delta\theta^{m+1})'\right)\right\Vert _{1}&\le&  
\Gamma_{1}(D_1+D_2)Q(\al L)^{-n+1}\left\Vert \mathcal{F}\left((\Delta\theta^{m})'\right)\right\Vert _{1},
\end{eqnarray}
where $\Gamma_1$ is a constant that depends on $M_0,M_1$ and $n$.

To complete the proof, we need to show that there exists a constant 
$\eta_0>0$ which does not change in the iterative process, 
such that $\widetilde{\beta}=\Gamma_{1}(D_1+D_2)Q(\al L)^{-n+1}\le 1/2$ 
provided that $L\ge \eta_0$. 
This seems to be trivial, simply choosing $\eta_0=\frac{1}{\al}\left(2\Gamma_{1}(D_1+D_2)Q\right)^{1/(n-1)}$ would make $\widetilde{\beta}\le 1/2$ 
provided that $L\ge \eta_0$. The problem is that $D_1, D_2, Q, \al$ 
vary during the iteration. We need to show that they are uniformly 
bounded during the iteration.

It is relatively easy to show that $\al$ is bounded,
\begin{eqnarray}
|1-\al|=\left|1-\frac{\theta^m(1)-\theta^m(0)}{\theta(1)-\theta(0)}\right|=\left|\frac{\dt^m(1)-\dt^m(0)}{2\pi L}\right|\le\frac{\| (\dt^m)'\| _{\infty}}{2\pi L}\le\|\frac{\mathcal{F}[(\Delta\theta^{m})']\|_{1}}{2\pi L}\le\frac{M_{0}}{4L},\nonumber
\end{eqnarray}
 which implies that 
$7/8\le\al\le 9/8$,
provided that $L\ge 2M_0$ and $\gamma\le 1/4$.

It is more involved to show that $Q$ is bounded. We need to first estimate $|(\ot^m)'|$ and $\|\mathcal{F}[(\ot^m)']\|_1$,
\begin{eqnarray}
|(\ot^m)'| & = &|\ot'-(\dt^m)'/(2\pi L^m)|\ge\frac{1}{\al}\left(\ot'-\|\mathcal{F}[(\dt^m)']\|_{1}/(2\pi L)\right)\ge \frac{8}{9}\left(\ot'-\frac{M_{0}}{4L}\right)
,\label{bound-IF}
\end{eqnarray}
 and 
\begin{eqnarray}
\|\mathcal{F}[(\ot^m)']\|_{1} & = & \frac{1}{\al}\| \widehat{\ot'}-\mathcal{F}[(\dt^m)']/(2\pi L)\| _{1}\le \frac{1}{\al}\left(\|\widehat{\ot'}\|_{1}+
\|\mathcal{F}[(\dt^m)']\|_{1}/(2\pi L)\right)\nonumber \\
 & \le & \frac{8}{7}\left(\|\widehat{\ot'}\|_{1}+
M_{0}/(4L)\right),
\end{eqnarray}
where we have used the assumption that $\gamma\le \frac{1}{4}$.
If $L$ satisfies the following condition,
\begin{eqnarray}
  \label{cond-Q}
  \frac{M_0}{L}\le 2\min(\ot'),
\end{eqnarray}
then we can get
\begin{eqnarray}
|(\ot^m)'|\ge\frac{4}{9}\ot',\quad \|\mathcal{F}[(\ot^m)']\|_{1}\le \frac{12}{7}\|\widehat{\ot'}\|_{1},
\label{est-z}
\end{eqnarray}
where we have used the fact that $\min(\ot')\le \max(\ot')\le \| \widehat{\ot'}\|_{1}$.
It follows from \myref{est-z} that
the term $z$ defined in \myref{def-Q} is uniformly bounded,
\begin{eqnarray}
  z\le z_0,
\end{eqnarray}
where $z_0$ is a constant depending on $\ot'$.

Based on the above estimation of $z$, the term $Q$ in \myref{def-Q} can be bounded by a constant,
\begin{eqnarray}
  \label{est-Q}
  Q=\frac{P\left(z,n\right)}{\left(\min(\ol{\theta}^m)'\right)^{n}}\le 
\left(\frac{9}{4}\right)^n\frac{P\left(z_0,n\right)}{\left(\min\ot'\right)^{n}}=Q_0,
\end{eqnarray}
where $Q_0$ is a constant that depends on $\ot'$ and $n$.

We now proceed to bound $D_1$ and $D_2$.
Note that if $|\Delta a^m|,|\Delta b^m|\le \frac{\sqrt{2}}{4}\min f_1$, we can bound $D_1$ as follows:
\begin{eqnarray}
D_{1} & = & \max\left\{ \frac{|b^m|+|\Delta b^m|}{((a^m)^{2}+(b^m)^{2})/2-((\Delta a^m)^{2}+(\Delta b^m)^{2})}\right\} \nonumber \\
 & \le & \max\frac{|f_{1}|+|\Delta b^m|}{(f_{1})^{2}/2-((\Delta a^m)^{2}+(\Delta b^m)^{2})}\nonumber\\
 & \le & \frac{4+\sqrt{2}}{\min f_{1}}=E_0 .
\end{eqnarray}
Similarly, we can show that  $D_2\le E_0$.

It is not difficult to see that
the condition $|\Delta a^m|,|\Delta b^m|\le \frac{\sqrt{2}}{4}\min f_1$ 
is valid  if $L$ satisfies
\begin{eqnarray}
\label{cond-da}
\Gamma_{0}Q_0(7L/8)^{-n+1}\le \sqrt{2}\min f_1 ,
\end{eqnarray}
since we have
\begin{eqnarray}
\label{est-da}
|\Delta a|&\le& \Gamma_{0}Q(\al L)^{-n+1}\gamma\le \frac{1}{4}\Gamma_{0}Q_0(7L/8)^{-n+1},\\
\label{est-db}
|\Delta b|&\le& \Gamma_{0}Q(\al L)^{-n+1}\gamma\le \frac{1}{4}\Gamma_{0}Q_0(7L/8)^{-n+1},
\end{eqnarray}
where we have used 
$\al\ge 7/8$, $Q\le Q_0$, the assumption $\gamma\le \frac{1}{4}$ and the estimates \myref{est-da-0}, \myref{est-db-0}.

Finally, we derive the following estimate for the error of the instantaneous frequency,
\begin{eqnarray}
\label{est-dt-final}
 \left\Vert \mathcal{F}\left((\Delta\theta^{m+1})'\right)\right\Vert _{1}\le\beta\left\Vert \mathcal{F}\left((\Delta\theta^{m})'\right)\right\Vert _{1},
\end{eqnarray}
where $\beta=\Gamma_{1}E_0Q_0(7L/8)^{-n+1}$, $\Gamma_1$ is a constant depends on $M_0, M_1, n$, $E_0$ depends on $\min f_1$, and $Q_0$ depends on $\ot'$ and $n$.

Now, we prove that if $\gamma=\frac{\|\mathcal{F}[(\dt^m)']\|_{1}}{2\pi M_0}\le \frac{1}{4}$, then we have
\begin{eqnarray}
 \left\Vert \mathcal{F}\left((\Delta\theta^{m+1})'\right)\right\Vert _{1}\le\frac{1}{2}\left\Vert \mathcal{F}\left((\Delta\theta^{m})'\right)\right\Vert _{1},
\end{eqnarray}
as long as $L$ satisfies the following conditions
\begin{eqnarray}
  \label{cond-delta-1}
L\ge 4M_1, \quad \frac{M_0}{L} &\le& \min\left\{\frac{1}{2},2\min(\ot')\right\},\\ 
\label{cond-delta-2}
\Gamma_{0}Q_0(7L/8)^{-n+1}&\le& \sqrt{2}\min f_1,\\
\label{cond-delta-3}
\Gamma_{1}E_0Q_0(7L/8)^{n-1}&\le&\frac{1}{2}.
\end{eqnarray}
It is obvious that there exist $\eta_0>0$, such that conditions \myref{cond-delta-1}-\myref{cond-delta-3} are satisfied provided that $L\ge \eta_0$. 
Here $\eta_0$ is determined by $M_0, M_1,\ot', \min f_1$ and $n$ which does not change during the iteration process. 

By induction, it is easy to show that if initially 
\begin{eqnarray*}
\frac{\|\mathcal{F}[(\theta^0-\theta)']\|_{1}}{2\pi M_0}\le \frac{1}{4},
\end{eqnarray*}
then there exists $\eta_0>0$ which is determined by $M_0, M_1,\ot', \min f_1$ and $n$, such that 
\begin{eqnarray}
 \left\Vert \mathcal{F}\left((\Delta\theta^{m+1})'\right)\right\Vert _{1}\le\frac{1}{2}\left\Vert \mathcal{F}\left((\Delta\theta^{m})'\right)\right\Vert _{1},
\end{eqnarray}
as long as $L\ge \eta_0$. This completes the proof of Theorem
\ref{theorem-sparse}.
\end{proof}

\begin{remark}
The above proof is valid for any $n\ge 2$. Note that $\eta_0$ depends on 
$n$. Theoretically, there exists an optimal choice of $n$ to make 
$\eta_0$ the smallest. 
By carefully tracking the constants in the proof, we can show that as $n$ going to $+\infty$, $\eta_0$ tends to $\delta C(n)^{1/(n-1)} M_0$, where $\delta$ is a constant independent on $n$,
and $C(n)$ is the maximum of the coefficients of polynomial $P(x,n)$ appears in Lemma \ref{lemma-fft}. We conjecture that $C(n)^{1/(n-1)}$ is bounded for $n\ge 2$. If this is the case, then $\eta_0$ is proportional to
$M_0$.
\end{remark}

\begin{remark}
Classical time-frequency analysis methods, such as the windowed 
Fourier transform or wavelet transform, in general cannot extract the
instantaneous frequency exactly for any signal due to the uncertainty 
principle. For a single linear chirp signal without amplitude 
modulation, the Wigner-Ville distribution can extract the  exact 
instantaneous frequency, but it fails if the signal consists of 
several components. Theorem \ref{theorem-sparse} shows that our 
data-driven time-frequency analysis method has the capability to 
recover the exact instantaneous frequency for a much larger range of 
signals. 
\end{remark}

\subsection{Approximate recovery}
\label{section-resolve-stable}

If the signal does not have an exact sparsity structure in the 
$\theta$-space as required in Theorem \ref{theorem-sparse},
our method cannot reproduce the exact decomposition. But the analysis 
in this subsection shows that we can still get an approximate result and 
the accuracy is determined by the truncated error of the signal. The
main result is stated below.
\begin{theorem} 
\label{theorem-decay} Assume that the instantaneous
frequency $\theta'$, has a sparse representation,
i.e. there exists $M_{0}$, such that 
\begin{eqnarray}
\theta'(t)\in V_{M_{0}}=\mbox{span}\left\{ e^{i2k\pi t/T},k=-M_{0},\cdots,1,\cdots,M_{0}\right\} .
\end{eqnarray}
 and the Fourier coefficients of the local mean $f_{0}$ and the
envelope $f_{1}$ in the $\ol{\theta}$-space have a fast decay, i.e. 
there exists $C_{0}>0,\; p\ge 4$ such that 
\begin{eqnarray}
|\widehat{f}_{0,\theta}(k)|\le C_{0}|k|^{-p},\quad|\widehat{f}_{1,\theta}(k)|\le C_{0}|k|^{-p}.
\end{eqnarray}
Then, there exists $\eta_0>4$ such that
 if $L>\eta_0$ and the intial guess satisfies
\begin{eqnarray}
\|\mathcal{F}\left(\left(\theta^{0}-\theta\right)'\right)\|_{1}\le\pi M_{0}/2,\;\label{condition-dense}
\end{eqnarray}
then we have 
\begin{eqnarray}
\|\mathcal{F}\left(\left(\theta^{m+1}-\theta\right)'\right)\|_{1}\le \Gamma_{0}(L/4)^{-p+2}
+\frac{1}{2}\left\Vert \mathcal{F}\left(\left(\theta^{m}-\theta\right)'\right)\right\Vert _{1},
\end{eqnarray}
 where $\Gamma_0>0$ is a constant determined by $C_0$, $p$, $M_0$ and $f_1$. 
\end{theorem}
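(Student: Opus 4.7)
My plan is to mirror the architecture of the proof of Theorem \ref{theorem-sparse}, replacing only the sparsity-based decay estimates for the Fourier coefficients of $f_0$, $a^m$, $b^m$ in the $\theta^m$-space by estimates that account for the power-law tails. All the machinery for passing from such Fourier-coefficient bounds to estimates on $\Delta a^m$, $\Delta b^m$, $\widetilde{\dt}$, and finally on $\|\mathcal{F}((\dt^{m+1})')\|_1$ through \myref{est-dt-l1-0}, as well as the uniform control of $\alpha$, $z$, $Q$, $D_1$, $D_2$ during the iteration, will carry over essentially unchanged once the analogs of \myref{est-a0}--\myref{est-b} are established.

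The core technical step is to introduce a cutoff $K\sim L/4$ and split each of $f_0$ and $f_1$ into a low-frequency truncation and a tail in the $\ot$-space. For the truncation $f_0^{\le K}=\sum_{|k|\le K}\widehat{f}_{0,\theta}(k)\,e^{i2\pi k\ot}$ I repeat the argument of Appendix~B with $M_1$ replaced by $K$: each Fourier coefficient of $f_0^{\le K}$ in the $\ol{\theta}^m$-space becomes an integral of the form $\int_0^1 e^{i\psi}e^{-i2\pi\omega\ol{\theta}^m}d\ol{\theta}^m$ with $\psi=2\pi k(\ot-\ol{\theta}^m)$, to which Lemma \ref{lemma-fft} applies and produces the analog of \myref{est-a0} with $M_1$ replaced by $K$. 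For the tail $f_0^{>K}$ I use the trivial bound $|\int_0^1 e^{i\psi}e^{-i2\pi\omega\ol{\theta}^m}d\ol{\theta}^m|\le 1$ together with the hypothesis $|\widehat{f}_{0,\theta}(k)|\le C_0|k|^{-p}$, producing an $\omega$-uniform error of order $C_0 K^{-p+1}$. The same decomposition applied to $f_1$, combined with $a^m=f_1\cos\dt^m$ and $b^m=f_1\sin\dt^m$, yields analogous two-term bounds for $\widehat{a}^m_{\theta^m}$ and $\widehat{b}^m_{\theta^m}$, since the high-frequency tail of $f_1$ contributes a uniform $L^\infty$ error of the same order whether multiplied by $\cos\dt^m$ or $\sin\dt^m$.

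Summing these pointwise bounds over the ranges appearing in \myref{est-da-0}, \myref{est-db-0} (each of length $O(L)$) gives
\[
\|\Delta a^m\|_\infty,\ \|\Delta b^m\|_\infty \;\le\; \Gamma'_0\, Q(\alpha L)^{-n+1}\gamma \;+\; \Gamma''_0\, (L/4)^{-p+2},
\]
where the first summand is exactly the contraction term from Theorem \ref{theorem-sparse} (with $M_1$ replaced by $K$) and the second is the tail contribution, using $L\cdot K^{-p+1}\sim (L/4)^{-p+2}$ once $n\ge p$ is fixed. Propagating through \myref{est-dt} and \myref{est-dt-l1-0} yields the claimed inequality, with $\Gamma_0$ depending on $C_0$, $p$, $M_0$ and $\min f_1$. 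The main obstacle is that the fixed error $\Gamma_0(L/4)^{-p+2}$ no longer vanishes as $m$ grows, so the uniform control of $D_1$ and $D_2$, which in the exact-sparse case followed from $|\Delta a^m|,|\Delta b^m|\le\frac{\sqrt{2}}{4}\min f_1$, must now be closed as a standing invariant of the induction. Enforcing this requires choosing $\eta_0$ large enough both to make the contraction factor at most $1/2$ (as in Theorem \ref{theorem-sparse}) and to make the persistent fixed error small compared to $\min f_1$; the hypothesis $p\ge 4$ gives $(L/4)^{-p+2}=O(L^{-2})$, which provides the necessary margin.
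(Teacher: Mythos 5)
Your overall architecture is the same as the paper's: both proofs reuse the Theorem \ref{theorem-sparse} machinery wholesale and only redo the estimates of $\widehat{f}_{0,\theta^m}$, $\widehat{a}^m_{\theta^m}$, $\widehat{b}^m_{\theta^m}$. But your key technical step has a genuine gap. You split $f_0$ and $f_1$ at a \emph{fixed} cutoff $K\sim L/4$ and control the tail contribution to each coefficient in the $\ol{\theta}^m$-space only by the $\omega$-uniform quantity $\sum_{|k|>K}|\widehat{f}_{0,\theta}(k)|=O(C_0K^{-p+1})$. That suffices for the two sums in \myref{est-da-0}, \myref{est-db-0} that run over bands of width $O(L^m)$, but the third sum, $\sum_{|k|>L^m/2}\left|\widehat{a}^m_{\theta^m}(k)\right|$, runs over \emph{all} frequencies beyond $L^m/2$ --- it is not ``of length $O(L)$'' --- and with an $\omega$-independent bound it diverges. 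This is precisely why the paper splits at the $\omega$-\emph{dependent} threshold $|k|\le|\omega|/(2\al)$ in \myref{est-a0-th2}: the tail then contributes $C_0\left(\frac{|\omega|}{2\al}\right)^{-p+1}$, which still decays in $\omega$, so the infinite sum converges and yields the $(L/4)^{-p+2}$ term (this is also where the constraint $n\le p-2$, hence $p\ge 4$, enters, via summability of $|k|^{n-p}$ over the low band; your scheme avoids that constraint but pays extra powers of $L$ when $M_1$ is replaced by $K$ in \myref{est-a0}, which only forces a slightly larger $n$ and worse constants).

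To repair your argument you must give the tail contribution decay in $\omega$: either let the cutoff depend on $\omega$ (which reproduces the paper's proof), or integrate by parts twice in $\ol{\theta}^m$ for the tail piece, using $p\ge4$ to bound the second $\ot$-derivative of $f_0^{>K}$, $f_1^{>K}\cos\dt^m$ by $O(K^{-p+3})$, so the tail coefficients are $O(|\omega|^{-2}K^{-p+3})$ and the sum over $|k|>L^m/2$ is again $O(L^{-p+2})$; neither device appears in your proposal. Two minor points: since $\al$ can be as large as $9/8$, you need $K\lesssim L/(4\al)$, not exactly $L/4$, for the step $|\omega-\al k|\ge|\omega|/2$; and the clause ``once $n\ge p$ is fixed'' plays no role --- $L\cdot K^{-p+1}\sim(L/4)^{-p+2}$ is just arithmetic. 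Your closing observation, that the persistent $O(L^{-p+2})$ error must be kept small relative to $\min f_1$ as a standing invariant of the induction, is correct and corresponds to the paper's conditions \myref{cond-da-th2}--\myref{cond-ratio-th2}.
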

\begin{remark}
This theorem shows that our iterative method 
will converge to the exact solution up to the truncation
error determined by the scale separation property.
\end{remark}

\begin{proof}
The proof is very similar to the proof of Theorem \ref{theorem-sparse}.
The only difference is that the estimates of $\widehat{f}_{0,\theta^m}(k),\;\widehat{a}^m_{\theta^m}$
and $\widehat{b}^m_{\theta^m}$ are more complicated since they are not 
sparse in the $\ot$-space. Here we only give these key estimates.

For $\widehat{f}_{0,\theta^m}(\omega),\; \omega\ne 0$, we have 
\begin{eqnarray}
\label{est-a0-0-th2}
|\widehat{f}_{0,\theta^m}| & = & \left|\int_{0}^{1}f_{0}e^{-i2\pi\omega\ot^m}d\ot^m\right|\nonumber \\
 & = & \int_0^1 \left|\sum_{k\neq 0}\widehat{f}_{0,\theta}(k)e^{i2\pi k\ot}e^{-i2\pi\omega\ot^m}d\ot^m\right|\nonumber \\
 & = & \left|\sum_{k\neq0}\widehat{f}_{0,\theta}(k)\int_{0}^{1}e^{i2\pi(\al k-\omega)\ot^m}e^{ik\dt^m/L}d\ot^m\right|
\end{eqnarray}
where $\al =L^m/L$ and $\widehat{f}_{0,\theta}(k)$ are the Fourier coefficients of $f_0$ as a funntion of $\ot$. Note that the integral is 0
when $k=0$ and $\omega\ne 0$. Thus  we exclude the case $k=0$ 
in the above summation.
In the derivation of the last equality, we have used the relationship that 
$\ot=\theta/L=(\theta^m+\Delta \theta^m)/L=\theta^m/L+\dt^m/L=\al \ot^m+\dt^m/L$.

As in the proof of the previous theorem, we also need to use Lemma 
\ref{lemma-fft}. In the previous proof, we can choose $n$ to be any
positive integer that is greater than 2. In the current theorem, the
Fourier coefficients 
$|\widehat{f}_{0,\theta}|$ and $|\widehat{f}_{1,\theta}|$ decay 
according to some power law. To obtain the desired estimates, we 
need to take $2\le n\le p-2$. This is why we require $p\ge 4$. 

Applying Lemma \ref{lemma-fft} to 
the last equality of \myref{est-a0-0-th2}, we have
\begin{eqnarray}
\label{est-a0-th2}
|\widehat{f}_{0,\theta^m}(\omega)| & \le & \sum_{k\neq0}|\widehat{f}_{0,\theta}(k)|\left|\int_{0}^{1}e^{i2\pi(\al k-\omega)\ot^m}e^{ik\dt^m/L}d\ot^m\right|\nonumber \\
 & \le & \sum_{|k|>\frac{|\omega|}{2\al}}|\widehat{f}_{0,\theta}(k)|+
\sum_{0<|k|\le \frac{|\omega|}{2\al}}|\widehat{f}_{0,\theta}(k)|\left|\int_{0}^{1}e^{i2\pi(\al k-\omega)\ot^m}e^{ik\dt^m/L}d\ot^m\right|\nonumber \\
 & \le & C_{0}\sum_{|k|>\frac{|\omega|}{2\al}}|k|^{-p}+C_{0}\sum_{0<|k|\le|\frac{\omega|}{2\al}}\frac{QM_{0}^{n}|k|^{-p}}{|\omega-\al k|^{n}}\sum_{j=1}^{n}\left|\frac{k}{L}\right|^{j}
\left(\frac{\|\mathcal{F}[(\dt^m)']\|_{1}}{2\pi M_{0}}\right)^{j}\nonumber \\
 & \le & C_{0}\int_{|\omega|/(2\al)}^\infty x^{-p}dx + 
C_{0}Q\left(\frac{|\omega|}{2}\right)^{-n}M_{0}^{n}
\left ( \sum_{0<|k|\le|\frac{\omega|}{2\al}} |k|^{-p +n}\right)
\left (\sum_{j=1}^n\left(\gamma/L\right)^{j} \right ) \nonumber\\
 & \le & C_{0}\left(\frac{|\omega|}{2\al}\right)^{-p+1}+C_{0}Q\left(\frac{|\omega|}{2}\right)^{-n}M_{0}^{n}\gamma/L,
\end{eqnarray}
 where we have used the assumption $n \leq p-2$, $\gamma\le 1/4$, and
the fact that $L \ge 1$ is the number of the periods within the
time interval $[0,1]$. Here $C_0$ is a generic constant, 
$Q$, $z$ and $\gamma$ are defined below:
\begin{eqnarray}
\label{def-Q-decay}
Q=\frac{P\left(z,n\right)}{\left(\min(\ot^m)'\right)^{n}},\quad z=\frac{\|\mathcal{F}[(\ot^m)']\|_{1}}{\min(\ot^m)'},\quad  \gamma=\frac{\|\mathcal{F}[(\Delta \theta^m)']\|_{1}}{2\pi M_{0}}.
\end{eqnarray}

Using an argument similar to that as in the derivation of 
\myref{est-a0-th2}, we can get the desired estimates for 
$\widehat{a}^m_{\theta^m}$ and $\widehat{b}^m_{\theta^m}$ as follows:
\begin{eqnarray}
\label{est-a-th2}
|\widehat{a}^m_{\theta^m}(\omega)|\le C_{0}\left(\frac{|\omega|}{2\al}\right)^{-p+1}+Q\left|\widehat{f}_{1,\theta}(0)\right||\omega|^{-n}M_{0}^{n}\gamma+C_{0}Q\left(\frac{|\omega|}{2}\right)^{-n}M_{0}^{n}\gamma.
\end{eqnarray}
\begin{eqnarray}
\label{est-b-th2}
|\widehat{b}^m_{\theta^m}(\omega)|\le C_{0}\left(\frac{|\omega|}{2\al}\right)^{-p+1}+Q\left|\widehat{f}_{1,\theta}(0)\right||\omega|^{-n}M_{0}^{n}\gamma+C_{0}Q\left(\frac{|\omega|}{2}\right)^{-n}M_{0}^{n}\gamma.
\end{eqnarray}

The estimates \myref{est-da-0} and \myref{est-db-0} remain valid in this case. Thus we obtain upper bounds for $\Delta a^m$ and $\Delta b^m$ by 
substituting \myref{est-a-th2} and
\myref{est-b-th2} into \myref{est-da-0} and \myref{est-db-0},
\begin{eqnarray}
|\Delta a^m| & \le & \Gamma_{1}L^{-p+2}+\Gamma_{2}Q(\al L)^{-n+1}\gamma,\quad\quad\quad\label{l1-a}\\
|\Delta b^m| & \le & \Gamma_{1}L^{-p+2}+\Gamma_{2}Q(\al L)^{-n+1}\gamma,\quad\label{l1-b}
\end{eqnarray}
 where $\Gamma_{1}$ is a constant depending on $C_0$,  $\Gamma_{2}$ depends
on $p$ and $\max\left(C_{0},|\widehat{f}_{1,\theta}(0)|\right)$.

Moreover, by following the same argument we did in the proof of Theorem \ref{theorem-sparse}
, we can obtain an error estimate for the instantaneous frequency,
\begin{eqnarray}
\label{est-dt-final-th2}
 \left\Vert \mathcal{F}\left((\Delta\theta^{m+1})'\right)\right\Vert _{1}\le \Gamma_{3}E_0(L/4)^{-p+2}+\Gamma_4 E_0Q_0(7L/8)^{-n+1}\left\Vert \mathcal{F}\left((\Delta\theta^{m})'\right)\right\Vert _{1},
\end{eqnarray}
as long as $\gamma\le 1/4$ and the following conditions are satisfied
\begin{eqnarray}
  \label{cond-Q-th2}
  L\ge 2M_0,\quad \frac{M_0}{L}&\le& 2\min(\ot'),\\
\Gamma_{1}(L/4)^{-p+2}+\Gamma_{2}Q_0(7L/8)^{-n+1} & \le & \sqrt{2}\min f_1,\label{cond-da-th2}\\
\Gamma_{3}E_0(L/4)^{-p+2}+\frac{1}{4}\Gamma_{4}Q_0E_0(7L/8)^{-n+1} & \le & \frac{\pi M_{0}}{2},\quad\label{cond-ratio-th2}\\
  \label{cond-beta-th2}
  \Gamma_{4}Q_0E_0(7L/8)^{-n+1}&\le&\frac{1}{2},
\end{eqnarray}
where $\Gamma_3, \Gamma_4$ are constants that depend on $C_0, p, M_0, \min f_1$ and $\ot'$.
Using these four constraints, we can easily derive a constant $\eta_0 > 4$, such that all these conditions are satisfied provided that $L\ge \eta_0$.
On the other hand, since $L > 4$ and $n \ge 2$, \myref{cond-beta-th2}
 implies that $\Gamma_{4}Q_0E_0 \le 1/2$. This proves
\begin{eqnarray}
 \left\Vert \mathcal{F}\left((\Delta\theta^{m+1})'\right)\right\Vert _{1}\le \Gamma_{0}E_0(L/4)^{-p+2}+ \frac{1}{2}\left\Vert \mathcal{F}\left((\Delta\theta^{m})'\right)\right\Vert _{1}.
\end{eqnarray}
This completes the proof of Theorem \ref{theorem-decay}.
\end{proof}
\begin{remark}
The constraint $n\le p-2$ in the above proof can be relaxed to 
$p\ge 3$ by using a more delicate calculation. 
\end{remark}

If we further consider a more general case: the instantaneous frequency is also approximately sparse instead of exactly sparse as we assume in Theorem \ref{theorem-sparse} and \ref{theorem-decay}. 
In this case, we can prove that the iterative algorithm also converges to
an approximate result. However, we cannot apply Lemma \ref{lemma-fft}
here and need the following lemma instead.
\begin{lemma} \label{lemma-fft-decay} 
Suppose $\phi'(t)>0,\; t\in[0,1]$, $\phi(0)=0,\;\phi(1)=1$,
and
\begin{eqnarray}
  |\widehat{\phi'}(k)|, |\widehat{\psi'}(k)|\le C|k|^{-p},\;\forall|k|>M_{0}.\nonumber
\end{eqnarray}
Then for $n\le p-1$, we have
\begin{eqnarray}
\left|\int_{0}^{1}e^{i\psi}e^{-i2\pi\omega\phi}d\phi\right|\le\frac{P\left(\frac{\|\widehat{\phi'}\|_{1,M_0}+CM_{0}^{-p+1}}{\min\phi'},n\right)
}{|\omega|^{n}\left(\min\phi'\right)^{n}}M_{0}^{n}\sum_{j=1}^{n}(2\pi M_{0})^{-j}\left(\|\widehat{\psi'}\|_{1,M_0}+CM_{0}^{-p+1}\right)^{j}
\nonumber
\end{eqnarray}
 provided that $e^{i\psi}e^{-i2\pi\omega\phi}$ is a periodic function. 
Here 
$\|\widehat{\psi'}\|_{1,M_{0}}=\sum_{|k|\le M_{0}}|\widehat{\psi'}(k)|$ and $P(x,n)$ is
the same $(n-1)$th order polynomial as in Lemma \ref{lemma-fft}.
\end{lemma}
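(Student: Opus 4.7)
My plan is to mirror the integration-by-parts argument from Lemma \ref{lemma-fft}, with the only modification being the replacement of the band-limited bound \myref{control-g} by its approximately sparse analog. As in Lemma \ref{lemma-fft}, I would integrate by parts $n$ times in the variable $\phi$; periodicity kills the boundary contributions and gives
\begin{eqnarray}
\left|\int_{0}^{1}e^{i\psi}e^{-i2\pi\omega\phi}d\phi\right|
\le \frac{1}{|2\pi\omega|^{n}}\max_{t\in[0,1]}\left|\frac{d^{n}(e^{i\psi})}{d\phi^{n}}\right|.\nonumber
\end{eqnarray}
The chain-rule expansion of $d^{n}(e^{i\psi})/d\phi^{n}$ involves derivatives $\psi^{(m)}$ and $\phi^{(m)}$ of order up to $n$, divided by powers of $\min\phi'$; combinatorially this is exactly the expression that produced the polynomial $P(z,n)$ in the band-limited proof, so the algebraic form of the final bound will carry over unchanged.

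The key new ingredient I need is to bound $\max_t |g^{(n)}(t)|$ for a smooth periodic $g$ whose derivative only satisfies the power-law tail $|\widehat{g'}(k)|\le C|k|^{-p}$ for $|k|>M_{0}$, in place of being strictly supported in $V_{M_0}$. Splitting into low and high frequencies I would write
\begin{eqnarray}
\max_t|g^{(n)}(t)| \le \sum_{|k|\le M_{0}}|2\pi k|^{n-1}|\widehat{g'}(k)| + \sum_{|k|>M_{0}}|2\pi k|^{n-1}|\widehat{g'}(k)|.\nonumber
\end{eqnarray}
The first sum is bounded by $(2\pi M_{0})^{n-1}\|\widehat{g'}\|_{1,M_{0}}$ exactly as in \myref{control-g}. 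For the tail, the hypothesis $n\le p-1$ makes $n-1-p\le -2$, so comparison with an integral gives $\sum_{|k|>M_{0}}|k|^{n-1-p}\le c_{n,p}\,M_{0}^{n-p}$. Factoring out $(2\pi M_{0})^{n-1}$ then yields the effective bound
\begin{eqnarray}
\max_t|g^{(n)}(t)|\le (2\pi M_{0})^{n-1}\left(\|\widehat{g'}\|_{1,M_{0}}+c'_{n,p}\,CM_{0}^{-p+1}\right),\nonumber
\end{eqnarray}
which is the substitute for \myref{control-g} used throughout Lemma \ref{lemma-fft}.

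I would then apply this estimate with $g=\psi$ and $g=\phi$, and insert it into the same combinatorial bound on $|d^{n}(e^{i\psi})/d\phi^{n}|$ that appeared in the proof of Lemma \ref{lemma-fft}. This produces the stated inequality, with the dimensionless constants $c_{n,p}, c'_{n,p}$ absorbed into the coefficients of $P(x,n)$ (which already depend on $n$). The main obstacle, such as it is, is bookkeeping: I must check that the quantity $\|\widehat{\phi'}\|_{1,M_0}+CM_0^{-p+1}$ really does enter polynomially in $P$ in the same pattern that $\|\widehat{\phi'}\|_1$ did, which requires nothing beyond retracing the chain-rule expansion. The hypothesis $n\le p-1$ is sharp for the tail to be summable, and hence is precisely where the strategy would fail for larger $n$.
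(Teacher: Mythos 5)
Your proposal is correct and follows essentially the same route as the paper: the paper's proof also reduces everything to replacing the band-limited bound \myref{control-g} by the low/high frequency split $\max_t|\psi^{(n)}|\le(2\pi M_0)^{n-1}\left(\|\widehat{\psi'}\|_{1,M_0}+CM_0^{-p+1}\right)$, with the tail summable precisely because $n\le p-1$, and then reuses the integration-by-parts and chain-rule machinery of Lemma \ref{lemma-fft} verbatim. The only cosmetic difference is that you absorb the tail-summation constant into $P(x,n)$, whereas the paper (somewhat loosely) keeps $P$ literally the same polynomial.
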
 
\begin{proof} The proof is similar to the proof of Lemma
\ref{lemma-fft}. The only difference is that 
we need the following estimate instead of \myref{control-g}, 
\begin{eqnarray}
\max_{t}|\psi^{(n)}(t)| & \le & \sum_{k}|(2\pi k)^{n-1}\widehat{\psi'}(k)|\le(2\pi M_{0})^{n-1}\sum_{|k|\le M_{0}}|\widehat{\psi'}(k)|+(2\pi)^{n-1}C\sum_{|k|>M_{0}}|k|^{-p+n-1}\nonumber \\
 & \le & (2\pi M_{0})^{n-1}\left(\|\widehat{\psi'}\|_{1,M_{0}}+CM_{0}^{-p+1}\right).
\end{eqnarray}
 \end{proof}
Using this lemma and following an argument similar to that as
in the previous two theorems, we can prove the following theorem:
\begin{theorem} 
\label{theorem-decay-theta} Assume that the Fourier coefficients of the instantaneous
frequency $\theta'$,  the local mean $f_{0}$ and the envelope $f_{1}$
all have fast decay, i.e. there exists $C_{0}>0,\; p\ge 4$
such that 
\begin{eqnarray}
|\mathcal{F}(\theta')(k)|\le C_0 |k|^{-p},\quad |\mathcal{F}_{\theta}(f_{0})(k)|\le C_{0}|k|^{-p},\quad|\mathcal{F}_\theta(f_{1})(k)|\le C_{0}|k|^{-p} .
\end{eqnarray}
 If $L$ is large enough and the intial guess satisfies
\begin{eqnarray}
\|\mathcal{F}\left(\left(\theta^{0}-\theta\right)'\right)\|_{1}\le\pi M_0/2,
\end{eqnarray}
then, we have
\begin{eqnarray}
\|\mathcal{F}\left(\left(\theta^{m+1}-\theta\right)'\right)\|_{1}\le \Gamma_{0}(L/4)^{-p+2}+\frac{1}{2}C_0 M_0^{-p+1}
+\frac{1}{2}\left\Vert \mathcal{F}\left(\left(\theta^{m}-\theta\right)'\right)\right\Vert _{1},
\end{eqnarray}
 where $\Gamma_0>0$ is a constant determined by $C_0$, $M_0$ and $f_1$. 
\end{theorem}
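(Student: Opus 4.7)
The plan is to adapt the argument of Theorem \ref{theorem-decay}, accounting for the fact that $\theta'$ is now only approximately sparse rather than exactly in $V_{M_0}$. Two modifications are required relative to the previous proof: the oscillatory integral estimates \myref{est-a0-th2}--\myref{est-b-th2} must be re-derived using Lemma \ref{lemma-fft-decay} in place of Lemma \ref{lemma-fft}, and Step 4 of the algorithm must be re-analyzed because the projection onto $V_{M_0}$ no longer annihilates $\theta'$ and therefore introduces an irreducible floor in the iteration error.

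First, I would re-derive the bounds on $\widehat{f}_{0,\theta^m}$, $\widehat{a}^m_{\theta^m}$, $\widehat{b}^m_{\theta^m}$. The structural setup is identical to \myref{est-a0-th2}: expand $f_0,f_1$ in Fourier series in $\ol{\theta}$ and use the change of variable $\ol{\theta} = \al \ol{\theta}^m + \dt^m/L$. For the inner oscillatory integral $\int_0^1 e^{i2\pi(\al k-\omega)\ol{\theta}^m}e^{ik\dt^m/L}\,d\ol{\theta}^m$, Lemma \ref{lemma-fft} no longer applies directly because $(\dt^m)'$ is no longer in $V_{M_0}$ (its high-frequency tail contains $(I-P_{V_{M_0}})\theta'$), so I would invoke Lemma \ref{lemma-fft-decay} instead. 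Its effect is to replace each $\|\widehat{\psi'}\|_1$ by $\|\widehat{\psi'}\|_{1,M_0}+CM_0^{-p+1}$, and similarly for the $\phi$ weights; under the running invariant $\gamma\le 1/4$ these enlarged norms stay uniformly bounded, so the analogues of \myref{est-a-th2}, \myref{est-b-th2} survive with only mildly degraded constants, and the pointwise bounds \myref{l1-a}, \myref{l1-b} on $|\Delta a^m|,|\Delta b^m|$ follow as before. These yield the $\Gamma_0(L/4)^{-p+2}$ contribution (from the $f_0,f_1$ truncation) and the contraction factor acting on $\|\mathcal{F}((\dt^m)')\|_1$.

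Second, I would revisit the update in Step 4, which is the genuinely new source producing the $\tfrac{1}{2}C_0 M_0^{-p+1}$ term. Since every iterate of the correction lies in $V_{M_0}$ by construction, the accumulated difference $(\theta^{m+1})'-(\theta^0)'$ stays in $V_{M_0}$ for every $m$. Taking (or reducing to) an initial guess with $(\theta^0)'\in V_{M_0}$, the residual $(\dt^{m+1})'$ splits into a part in $V_{M_0}$, on which the contraction argument operates verbatim, plus the high-frequency tail $(I-P_{V_{M_0}})\theta'$. By the decay hypothesis $|\mathcal{F}(\theta')(k)|\le C_0|k|^{-p}$, the $\ell^1$ norm of this tail is bounded by $2C_0\sum_{k>M_0}k^{-p}\le CM_0^{-p+1}$; modifying \myref{est-dt-l1-ori} by carrying this tail as an extra additive error term, and absorbing the universal constants into the prefactor, produces the additive floor in the stated form.

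Combining the two modifications and following the end game of Theorem \ref{theorem-decay} verbatim, one arrives, for $L$ large enough and with $2\le n\le p-1$, at a recursion of the form
\[
\|\mathcal{F}((\dt^{m+1})')\|_1 \le \Gamma_0(L/4)^{-p+2} + \tfrac{1}{2}C_0 M_0^{-p+1} + \Gamma' Q_0 E_0(7L/8)^{-n+1}\|\mathcal{F}((\dt^m)')\|_1,
\]
and the contraction factor can be made $\le 1/2$ by choosing $L$ sufficiently large, exactly as in \myref{cond-beta-th2}. The main obstacle is the inductive bookkeeping: one must verify that the smallness invariant $\gamma\le 1/4$ is preserved by this recursion in the presence of the new residual floor, which reduces to requiring $M_0$ (and hence $L$) large enough that $C_0 M_0^{-p+1}$ remains well below $\pi M_0/2$ while $\Gamma_0(L/4)^{-p+2}$ is also controlled. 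A secondary, more routine obstacle is re-establishing the uniform bounds on $z$ and $Q$ from \myref{def-Q-decay} with the modified Lemma \ref{lemma-fft-decay} weights, paralleling \myref{est-z}--\myref{est-Q}.
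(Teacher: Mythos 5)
Your proposal is correct and follows essentially the same route as the paper, which proves this theorem only by remarking that one replaces Lemma \ref{lemma-fft} with Lemma \ref{lemma-fft-decay} and repeats the arguments of Theorems \ref{theorem-sparse} and \ref{theorem-decay}; your identification of the additive $\tfrac{1}{2}C_0M_0^{-p+1}$ floor as the tail of $\theta'$ that the projection $P_{V_{M_0}}$ in Step 4 cannot recover is exactly the intended source of that term. One small caveat: to keep the sum over $|k|^{-p+n}$ bounded as in \myref{est-a0-th2} you should restrict to $n\le p-2$ (e.g.\ $n=2$), not merely the $n\le p-1$ allowed by Lemma \ref{lemma-fft-decay}.
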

\begin{remark}
 In the analysis presented in this section, we have assumed that the Fourier transform in the $\theta^m$-space, $\mathcal{F}_{\theta^m}(\cdot)$, is exact. In real computations, we need to first interpolate the signal from
a uniform grid in the physical space to a uniform grid in the
 $\theta^m$-space, then apply the Fast Fourier transform. This 
interpolation process would introduce some error. However, the 
interpolation error should be very small since we assume that the 
signal is well resolved by the sample points.
\end{remark}

\section{Periodic signal with sparse samples}
\label{section-sparse}

In this section, we will consider a more challenging case, the sample points $t_{j},\; j=1,\cdots,N$ are too few to resolve the signal. In this case, the algorithm presented in the last section does not 
apply directly. The reason is that the Fourier transform in the
$\theta^m$-space, $\mathcal{F}_{\theta^m}(\cdot)$, cannot be computed accurately by the interpolation-FFT method. One way to obtain the the Fourier transform in the $\theta^m$-space is to solve a linear system. Such method
is very expensive. Moreover, the resulting linear system is under-determined since we do not have sufficient number of sample points.

Thanks to the recent development of compressive sensing, we know that if the Fourier coefficients are sparse, then $l^1$ minimization would give an approximate solution from very few sample points.
Hence, we can use a $l^1$ minimization problem to generate the 
Fourier coefficients in the $\theta^m$-space in each step: 
\begin{itemize}
\item $\theta^{0}=\theta_{0},\; m=0$. 
\item Step 1: Solve the $l_{1}$ minimization problem to get the Fourier
transform of the signal $r^m$ in the $\theta^{m}$-coordinate: 
\begin{eqnarray}
\widehat{f}_{\theta^m}=\arg\min_{x\in \mathbb{R}^{N_b}} \|x\|_{1},\quad\mbox{subject to}\quad A_{\theta^{m}}\cdot x=f \label{opt-weight}
\end{eqnarray}
 where $A_{\theta^{m}}\in\mathbb{R}^{N_s\times N_b},\; N_s<N_b$, $N_s$ is
the number of samples and $N_b$ is the number of Fourier modes. 
$A_{\theta^{m}}(j,k)=e^{i2\pi k\ol{\theta}^{m}(t_{j})},\quad j=1,\cdots,N_s,\; k=-N_b/2+1,\cdots, N_b/2$
and $\ol{\theta}^{m}=\frac{\theta^{m}-\theta^{m}(0)}{\theta^{m}(T)-\theta^{m}(0)}$.
\item Step 2: Apply a cutoff function to the Fourier Transform of $r_{\theta_{k}^{n}}^{k-1}$
to compute $a^{m+1}$ and $b^{m+1}$: 
\begin{eqnarray}
a^{m+1} & = & \mathcal{F}_{\theta^{m}}^{-1}\left[\left(\widehat{f}_{\theta^{m}}\left(\omega+L_{\theta^{m}}\right)+\widehat{f}_{\theta^{m}}\left(\omega-L_{\theta^{m}}\right)\right)\cdot\chi\left(\omega/L_{\theta^{m}}\right)\right],\\
b^{m+1} & = & \mathcal{F}_{\theta^{m}}^{-1}\left[-i\cdot\left(\widehat{f}_{\theta^{m}}\left(\omega+L_{\theta^{m}}\right)-\widehat{f}_{\theta^{m}}\left(\omega-L_{\theta^{m}}\right)\right)\cdot\chi\left(\omega/L_{\theta^{m}}\right)\right],
\end{eqnarray}
where $\mathcal{F}_{\theta^m}^{-1}$ is the inverse Fourier transform defined in the $\theta^{m}$-coordinate: 
\begin{eqnarray}
\mathcal{F}_{\theta^m}^{-1}\left(\widehat{f}_{\theta^{m}}\right)(t_j)=\sum_{\omega=-N_b/2+1}^{N_b/2}\widehat{f}_{\theta^{m}}(\omega)e^{i2\pi\omega\ol{\theta}^m(t_j)},\quad j=1,\cdots,N_s,
\end{eqnarray}
 and $\chi$ is the cutoff function, 
\begin{eqnarray}
\chi(\omega)=\left\{ \begin{array}{cl}
1, & -1/2<\omega<1/2,\\
0, & \mbox{otherwise}.
\end{array}\right.
\end{eqnarray}
\item Step 3: Update $\theta^{m}$ in the $t$-coordinate:
\begin{eqnarray}
\dt'=P_{V_{M_{0}}}\left(\frac{d}{dt}\left(\arctan\left(\frac{b^{m+1}}{a^{m+1}}\right)\right)\right),\;\dt(t)=\int_{0}^{t}\dt'(s)ds,\quad\theta^{m+1}=\theta^{m}+\beta\dt,
\nonumber
\end{eqnarray}
 where $\beta\in[0,1]$ is chosen to make sure that $\theta^{m+1}$
is monotonically increasing: 
\begin{eqnarray}
\beta=\max\left\{ \alpha\in[0,1]:\frac{d}{dt}\left(\theta^{m}+\alpha\dt\right)\ge0\right\} ,
\end{eqnarray}
 and $P_{V_{M_{0}}}$ is the projection operator to the space 
$V_{M_{0}}=\mbox{span}\left\{ e^{i2k\pi t/T},k=-M_{0},\cdots,0,\cdots,M_{0}\right\}$ and $M_0$ is chosen 
{\it a priori}.

\item Step 4: If $\|\theta_{k}^{n+1}-\theta_{k}^{n}\|_{2}<\epsilon_{0}$,
stop. Otherwise, set $n=n+1$ and go to Step 1. 
\end{itemize}

Suppose the sample points $t_j,\;j=1,\cdots,N_s$ are selected at random
from a set of uniform 
grid $l/N_f,\; l=0,\cdots,N_f-1$, then the optimization problem \myref{opt-weight} in Step 1 can be rewritten in the following form: 
\begin{eqnarray}
\min\|x\|_{1},\quad\mbox{subject to}\quad \Phi_{\theta^{m}}\cdot x=\widetilde{f},
\label{opt-l2}
\end{eqnarray}
 where $\widetilde{f}=\sqrt{\frac{\left(\ot^{m}\right)'}{N_f}}\; f$ and $\Phi_{\theta^{m}}$ is obtained by selecting $N_s$ rows from an $N_f$ by $N_b$
matrix $U_{\theta^{m}}$ which is defined as $U_{\theta^{m}}(j,k)=\sqrt{\frac{\left(\ot^{m}\right)'}{N_f}}\cdot e^{i2\pi k\ot^m(t_{j})},\;j=1,\cdots,N_f,\; k=-N_b/2+1
,\cdots,N_b/2$. 
As we will show later, the columns of $U_{\theta^m}$
are approximately orthogonal to each other. This property
will play an important role in our convergence and stability analysis.

We remark that our problem is more challenging than the compressive sensing problem in the sense that we need not only to find the sparsest representation but also a basis parametrized by a phase function $\theta$ over 
which the signal has the sparsest representation. To overcome this
difficulty, we propose an iterative algorithm to solve this nonlinear
optimization problem.

\subsection{Exact recovery}
\label{section-sparse-exact}

\begin{theorem} \label{theorem-sparse-sample-exact} 
Under the same assumption as in Theorem \ref{theorem-sparse}, there exist $\eta_0>0,\; \eta_1>0$, such that
\begin{eqnarray}
\|\mathcal{F}\left(\left(\theta^{m+1}-\theta\right)'\right)\|_{1}\le \frac{1}{2}\left\Vert \mathcal{F}\left(\left(\theta^{m}-\theta\right)'\right)\right\Vert _{1},
\end{eqnarray}
provided that $L\ge \eta_0$ and $S\ge \eta_1$, where
$S$ be the largest number such that $\delta_{3S}(\Phi_{\theta^{m}})+3\delta_{4S}(\Phi_{\theta^{m}})<2$. Here
$\delta_S(A)$ is the $S$-restricted isometry constant of matrix $A$ given in \cite{CT06a}, which is the smallest number such that 
\begin{eqnarray}
  (1-\delta_S)\|c\|_{l^2}^2\le\|A_Tc\|_{l^2}^2\le (1+\delta_S)\|c\|_{l^2}^2,\nonumber
\end{eqnarray}
for all subsets $T$ with $|T|\le S$ and coefficients sequences $(c_j)_{j\in T}$.
 \end{theorem}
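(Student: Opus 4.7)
The plan is to follow the proof of Theorem~\ref{theorem-sparse} verbatim, replacing only the step that extracts the Fourier coefficients of $f$ in the $\theta^m$-space. In the well-resolved case that step was an FFT and produced the exact coefficients, so the tail bounds~\myref{est-da-0}--\myref{est-db-0} held. In the sparse-sample case it is the $l^1$ minimization~\myref{opt-weight}, which returns a minimizer $\widehat{f}^{\,*}_{\theta^m}$ in place of the true coefficients $\widehat{f}_{\theta^m}$. Once $\|\Delta a^m\|_\infty$ and $\|\Delta b^m\|_\infty$ are controlled, the rest of the argument -- the mean-value inequality~\myref{est-dt} for the arctan, the projection/monotonicity step leading to~\myref{est-dt-l1-0}, the uniform control $\al\in[7/8,9/8]$, $Q\le Q_0$, $D_1+D_2\le 2E_0$, and the induction on $m$ from the initial bound~\myref{cond-initial} -- transfers unchanged.

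For the $l^1$ step I invoke the Cand\`es--Tao recovery theorem from \cite{CT06a}: under $\delta_{3S}(\Phi_{\theta^m})+3\delta_{4S}(\Phi_{\theta^m})<2$,
\begin{equation*}
\bigl\|\widehat{f}^{\,*}_{\theta^m}-\widehat{f}_{\theta^m}\bigr\|_{l^2}\le C_{CT}\,\sigma_S\bigl(\widehat{f}_{\theta^m}\bigr)_{l^1}/\sqrt{S},
\end{equation*}
where $\sigma_S(\cdot)_{l^1}$ denotes the best $S$-term $l^1$-approximation error. Writing $f=f_0+a^m\cos\theta^m-b^m\sin\theta^m$ (which follows from $\theta=\theta^m+\dt^m$), the sequence $\widehat{f}_{\theta^m}$ is assembled from $\widehat{f}_{0,\theta^m}$ and the shifts of $\widehat{a}^m_{\theta^m}$, $\widehat{b}^m_{\theta^m}$ by $\pm L^m$, and the estimates~\myref{est-a0}--\myref{est-b} of Appendix~B already give power-law decay of order $(|\omega|/2)^{-n}$ for these tails outside three bands of fixed width centred near $0$ and $\pm L^m$. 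Summing those decays outside an optimally chosen support of size $S$ yields
\begin{equation*}
\sigma_S\bigl(\widehat{f}_{\theta^m}\bigr)_{l^1}\le \widetilde{C}\, S^{-n+2}\gamma,\qquad \gamma=\frac{\|\mathcal{F}[(\dt^m)']\|_1}{2\pi M_0}.
\end{equation*}
Passing through the $\chi$-cutoff and the inverse $\theta^m$-Fourier transform of Step~2 and combining with~\myref{est-da-0}--\myref{est-db-0}, I obtain
\begin{equation*}
\|\Delta a^m\|_\infty+\|\Delta b^m\|_\infty\le \Gamma_0 Q_0(\al L)^{-n+1}\gamma+\widetilde{\Gamma}_0\,S^{-n+2}\gamma.
\end{equation*}
Inserting this into~\myref{est-dt-l1-0} produces a contraction factor of the form $\beta_L(L)+\beta_S(S)$ with $\beta_L(L),\beta_S(S)\to 0$ as $L,S\to\infty$. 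Choosing $\eta_0,\eta_1$ so that $\beta_L(\eta_0)+\beta_S(\eta_1)\le 1/2$ (and large enough that the standing hypothesis $\gamma\le 1/4$ used in Appendix~B is preserved throughout the iteration) closes the induction.

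The main obstacle is the best-$S$-term estimate $\sigma_S(\widehat{f}_{\theta^m})_{l^1}=O(S^{-n+2}\gamma)$. Unlike in Theorem~\ref{theorem-sparse}, where the tails of $\widehat{f}_{0,\theta^m}$, $\widehat{a}^m_{\theta^m}$, $\widehat{b}^m_{\theta^m}$ were summed over windows with deterministic centres, here I need their joint $l^1$-mass on the complement of an \emph{optimally chosen} support of size $S$, and the three bands near $0$ and $\pm L^m$ must be treated simultaneously. A secondary technicality is that the RIP hypothesis is imposed on $\Phi_{\theta^m}$, whose rows drift with the iterate $\theta^m$; the uniform bounds on $\|(\ot^m)'\|_\infty$ and $\|\mathcal{F}[(\ot^m)']\|_1$ already established inside the proof of Theorem~\ref{theorem-sparse} are what keep this manageable and what connect the present theorem to the random-sampling guarantee stated later in the paper.
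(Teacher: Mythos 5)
Your plan is the paper's plan: start from the sparse-sample error formulas \myref{exp-error-a-ss}--\myref{exp-error-b-ss}, control the new term coming from the $l^1$ minimization by a compressive-sensing recovery theorem together with a best-$S$-term bound on $\widehat{f}_{\theta^m}$ built from the Appendix B tail estimates on three bands around $0,\pm L^m$, and then rerun the contraction argument of Theorem \ref{theorem-sparse}. The one step that does not go through as written is the passage from the recovery estimate you quote to your displayed bound on $\|\Delta a^m\|_\infty+\|\Delta b^m\|_\infty$. You invoke the $l^2$ form $\|\widehat{f}^{\,*}_{\theta^m}-\widehat{f}_{\theta^m}\|_{l^2}\le C\,\sigma_S(\widehat{f}_{\theta^m})_{l^1}/\sqrt{S}$, but what must be controlled in \myref{exp-error-a-ss} is an $l^1$ sum of the coefficient error over the band $L^m/2<k<3L^m/2$, which contains on the order of $L^m$ entries (equivalently, after the cutoff and inverse transform, $\|\Delta a^m\|_\infty$ is bounded by the $l^1$ norm of the coefficient error inside that window). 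Converting your $l^2$ estimate into this band-$l^1$ estimate by Cauchy--Schwarz costs a factor of order $\sqrt{L^m}$, so what you actually obtain is an extra term of order $\sqrt{L^m/S}\,S^{-n+1}\gamma$ rather than $\widetilde{\Gamma}_0 S^{-n+2}\gamma$. This breaks the statement you are proving, because the thresholds $\eta_0$ (for $L$) and $\eta_1$ (for $S$) are required to be independent: with $S$ fixed at $\eta_1$, your error term grows like $\sqrt{L}$ as $L\to\infty$, so no uniform contraction factor $1/2$ results.

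The repair is exactly what the paper does: use the $l^1$ instance-optimality form of the Cand\`es--Romberg--Tao result (the paper's Theorem \ref{stable-l1}), namely $\|x^*-x_0\|_1\le C_{2,S}\|x_0-x_{0,S}\|_1$ under $\delta_{3S}+3\delta_{4S}<2$. Then the band sum is dominated by the full $l^1$ error with no dimension factor, and the best-$S$-term error is estimated just as you sketch: take (WLOG $L^m>S/3$) the candidate support consisting of three bands of half-width $S/6$ centered at $0,\pm L^m$ and sum the Appendix B tails, which gives $\|\widehat{f}_{\theta^m}-\widehat{f}_{\theta^m,S}\|_1\le\Gamma_1 Q S^{-n+1}\gamma$; note your exponent $S^{-n+2}$ is one power weaker than this and yields no decay when $n=2$. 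With these substitutions one gets $|\Delta a^m|,|\Delta b^m|\le\left(\Gamma_0(\al L)^{-n+1}+C_{2,S}\Gamma_1 S^{-n+1}\right)Q\gamma$, which is the paper's estimate, and the rest of your argument (the arctan mean-value bound, the projection step, the uniform bounds on $\al$, $Q$, $D_1+D_2$, and the induction from \myref{cond-initial}) then closes the proof as you describe.
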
 

To prove this theorem, we need to use the following theorem of Candes, Romberg, and Tao \cite{CRT06b}. 
\begin{theorem} \label{stable-l1} Let $S$
be such that $\delta_{3S}(A)+3\delta_{4S}(A)<2$, where $A\in\mathbb{R}^{n\times m},\; n<m$.
Suppose that $x_{0}$ is an arbitrary vector in $\mathbb{R}^{m}$
and let $x_{0,S}$ be the truncated vector corresponding to the $S$
largest values of $x_{0}$. Then the solution $x^{*}$ to the $l_{1}$
minimization problem 
\begin{eqnarray}
\min\|x\|_{1},\quad\quad\mbox{subject to}\quad Ax=f
\end{eqnarray}
satisfies 
\begin{eqnarray}
\|x^{*}-x_{0}\|_{1}\le C_{2,S}\cdot\|x_{0}-x_{0,S}\|_{1} .
\end{eqnarray}
 \end{theorem}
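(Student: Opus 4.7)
The plan is to bound $h := x^* - x_0$ directly in $\ell^1$ by combining two ingredients: the optimality of $x^*$ in the $\ell^1$ program, which forces $h$ to be nearly concentrated on the top-$S$ support of $x_0$, and the near-isometry of $A$ on sparse vectors, which controls any vector in the null space of $A$. The bridge between them is the observation that since $Ax^* = Ax_0 = f$, the difference satisfies $Ah = 0$.

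First I would fix $T_0$ to be the support of $x_{0,S}$ (the $S$ largest-magnitude indices of $x_0$) and partition the complement $T_0^c$ greedily into successive blocks $T_1,T_2,\ldots$ of size $3S$ (except possibly the last), where $T_1$ indexes the $3S$ largest entries of $h$ restricted to $T_0^c$, $T_2$ indexes the next $3S$, and so on. This choice is tuned precisely to the hypothesis: the combined set $T_{01} := T_0 \cup T_1$ has cardinality at most $4S$ (controlled by $\delta_{4S}$), while each subsequent block $T_j$ has size $3S$ (controlled by $\delta_{3S}$). The extremal property of this greedy partition yields the tail-decay estimate $\|h_{T_j}\|_2 \le (3S)^{-1/2}\|h_{T_{j-1}}\|_1$ for $j\ge 2$, and hence $\sum_{j\ge 2}\|h_{T_j}\|_2 \le (3S)^{-1/2}\|h_{T_0^c}\|_1$.

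Next I would derive the cone condition from $\ell^1$ optimality. Since $\|x^*\|_1 \le \|x_0\|_1$, splitting the norm on $T_0$ and $T_0^c$ and applying the reverse triangle inequality yields the standard tube estimate
\begin{eqnarray*}
\|h_{T_0^c}\|_1 \le \|h_{T_0}\|_1 + 2\|x_0 - x_{0,S}\|_1 \le \sqrt{S}\,\|h_{T_0}\|_2 + 2\|x_0 - x_{0,S}\|_1,
\end{eqnarray*}
which is the bridge transferring $\ell^2$ control on $T_{01}$ into $\ell^1$ control everywhere.

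The core RIP step rewrites $Ah = 0$ as $Ah_{T_{01}} = -\sum_{j\ge 2} Ah_{T_j}$, applies the lower RIP bound $(1-\delta_{4S})\|h_{T_{01}}\|_2^2 \le \|Ah_{T_{01}}\|_2^2$, and expands the right-hand side using the near-orthogonality consequence of RIP that $|\langle Au, Av\rangle| \le \delta_{p+q}\|u\|_2\|v\|_2$ for disjointly supported sparse vectors; this leaves $\|h_{T_{01}}\|_2$ bounded by a multiple of $\sum_{j\ge 2}\|h_{T_j}\|_2$, which the tail decay converts into an estimate of the form $\|h_{T_{01}}\|_2 \le C\,S^{-1/2}\|h_{T_0^c}\|_1$. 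The main obstacle, and precisely the reason the hypothesis $\delta_{3S} + 3\delta_{4S} < 2$ appears, is closing the loop: substituting the cone estimate into this RIP estimate produces a self-referential inequality of the form $\|h_{T_{01}}\|_2 \le \rho\,\|h_{T_{01}}\|_2 + C'\,S^{-1/2}\|x_0 - x_{0,S}\|_1$, and the algebra collapses to the contraction $\rho < 1$ exactly under the stated assumption. Once this yields $\|h_{T_{01}}\|_2 \le C''\,S^{-1/2}\|x_0 - x_{0,S}\|_1$, substituting back into the cone estimate and using $\|h\|_1 = \|h_{T_0}\|_1 + \|h_{T_0^c}\|_1$ produces the claimed $\ell^1$ bound $\|h\|_1 \le C_{2,S}\|x_0 - x_{0,S}\|_1$, with $C_{2,S}$ an explicit function of $\delta_{3S}$ and $\delta_{4S}$.
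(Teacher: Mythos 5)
First, note that the paper does not prove this statement at all: it is quoted verbatim as a known result of Cand\`es--Romberg--Tao \cite{CRT06b} and used as a black box in the proof of Theorem \ref{theorem-sparse-sample-exact}. So there is no ``paper proof'' to compare against; what you have written is a reconstruction of the CRT argument, and its skeleton --- set $h=x^*-x_0$ with $Ah=0$, take $T_0$ the top-$S$ support of $x_0$, partition $T_0^c$ greedily into blocks of size $3S$, derive the tail estimate $\sum_{j\ge2}\|h_{T_j}\|_2\le(3S)^{-1/2}\|h_{T_0^c}\|_1$, the cone condition $\|h_{T_0^c}\|_1\le\|h_{T_0}\|_1+2\|x_0-x_{0,S}\|_1$ from $\ell^1$ optimality, and then close a self-referential inequality for $\|h_{T_{01}}\|_2$ --- is exactly the standard one, and the final passage from the $\ell^2$ bound on $h_{T_{01}}$ to the stated $\ell^1$ bound is correct.

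The gap is in your core RIP step. You propose to expand $\|Ah_{T_{01}}\|_2^2=-\langle Ah_{T_{01}},\sum_{j\ge2}Ah_{T_j}\rangle$ and invoke $|\langle Au,Av\rangle|\le\delta_{p+q}\|u\|_2\|v\|_2$ for disjointly supported vectors. But with your block sizes the pair $(T_1,T_j)$ consists of two disjoint sets of size $3S$ each, so that inequality requires $\delta_{6S}$ (or the restricted orthogonality constant $\theta_{3S,3S}$), which the hypothesis $\delta_{3S}+3\delta_{4S}<2$ does not control; the claim that ``the algebra collapses to the contraction exactly under the stated assumption'' is therefore not substantiated by this route. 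The condition in the statement comes out of the cruder estimate that avoids inner products altogether:
\begin{eqnarray}
0=\|Ah\|_2\ \ge\ \|Ah_{T_{01}}\|_2-\sum_{j\ge2}\|Ah_{T_j}\|_2\ \ge\ \sqrt{1-\delta_{4S}}\,\|h_{T_{01}}\|_2-\sqrt{1+\delta_{3S}}\sum_{j\ge2}\|h_{T_j}\|_2,\nonumber
\end{eqnarray}
followed by your tail and cone estimates, which give $\sum_{j\ge2}\|h_{T_j}\|_2\le\frac{1}{\sqrt{3}}\|h_{T_0}\|_2+\frac{2}{\sqrt{3S}}\|x_0-x_{0,S}\|_1$; since $\|h_{T_0}\|_2\le\|h_{T_{01}}\|_2$, the loop closes precisely when $\sqrt{1+\delta_{3S}}/\sqrt{3}<\sqrt{1-\delta_{4S}}$, i.e.\ when $\delta_{3S}+3\delta_{4S}<2$, which is where the factor $3$ and the block size $3S$ come from. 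With that one step replaced, the rest of your argument goes through and yields the $\ell^1$ bound with an explicit $C_{2,S}$.
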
 

Now we present the proof of Theorem \ref{theorem-sparse-sample-exact}.

\begin{proof}\textit{of Theorem \ref{theorem-sparse-sample-exact}}.
Using \myref{exp-error-a-ss} and \myref{exp-error-b-ss} in Appendix B, we have 
\begin{eqnarray}
|\Delta a^m|
& \le &2\sum_{\frac{L^m}{2}<k<\frac{3}{2}L^m}\left|\widehat{f}_{0,\theta^m}(k)\right|+\sum_{\frac{3}{2}L^m<k<\frac{5}{2}L^m}\left(\left|
\widehat{a}^m_{\theta^m}(k)\right|+\left|\widehat{b}^m_{\theta^m}(k)\right|\right)\nonumber\\
&&+\sum_{|k|>\frac{L^m}{2}}\left|\widehat{a}^m_{\theta^m}(k)\right|+2\sum_{\frac{L^m}{2}<k<\frac{3}{2}L^m}\left|\widehat{f}_{\theta^m}(k)-
\widehat{\widetilde{f}}_{\theta^m}(k)\right|\nonumber \\
 & \le & \Gamma_{0}Q(\al L)^{-n+1}\gamma +C_{2,S}\cdot\|\widehat{f}_{\theta^m}-\widehat{f}_{\theta^m,S}\|_{1},
\label{eqn-91}
\end{eqnarray}
where $\Gamma_0$ is a constant depending on $M_0, M_1, n$ and $\widehat{f}_{\theta^m,S}$ is the truncated vector corresponding to the $S$
largest values of $\widehat{f}_{\theta^m}$.

 Without loss of generality, we assume that $L^m>S/3$, and define $\widehat{\overline{f}}_{\theta^m,S}$ to be 
 \begin{eqnarray*}
   \widehat{\overline{f}}_{\theta^m,S}(k)=\left\{\begin{array}{ll}
\widehat{f}_{\theta^m}(k),& k\in [-L^m-S/6,-L^m+S/6]\cup [-S/6,S/6]\cup [L^m-S/6,L^m+S/6],\\
0,& \mbox{otherwise}.
\end{array}\right.
 \end{eqnarray*}
Then by the definition of $\widehat{f}_{\theta^m,S}$ and $\widehat{\overline{f}}_{\theta^m,S}$,
we have 
\begin{eqnarray}
\|\widehat{f}_{\theta^m}-\widehat{f}_{\theta^m,S}\|_{1} & \le & \|\widehat{f}_{\theta^m}-\widehat{\overline{f}}_{\theta^m,S}\|_{1}\nonumber\\
&=&\sum_{S/6<|k|<L^m-S/6}|\widehat{f}_{\theta^m}(k)|+\sum_{|k|>L^m+S/6}|\widehat{f}_{\theta^m}(k)|\nonumber \\
 & \le & \sum_{|k|>S/6}|\widehat{f}_{0,\theta^m}(k)|+\sum_{|k|>S/6}|\widehat{a}_{\theta^m}(k)|+\sum_{|k|>S/6}|\widehat{b}_{\theta^m}(k)|\nonumber \\
 & \le & \Gamma_{1}QS^{-n+1}\gamma.\quad\quad
\label{eqn-92}
\end{eqnarray}
Substituting \myref{eqn-92} into \myref{eqn-91}, we get
\begin{eqnarray}
|\Delta a^m| \le  \left(\Gamma_0(\al L)^{-n+1}+C_{2,S}\Gamma_1S^{-n+1}\right)Q\gamma.
\end{eqnarray}
Similarly, we obtain
\begin{eqnarray}
|\Delta b^m| \le  \left(\Gamma_0(\al L)^{-n+1}+C_{2,S}\Gamma_1S^{-n+1}\right)Q\gamma.
\end{eqnarray}
Using these two key estimates and follow the same argument as that in
the proof of Theorem \ref{theorem-sparse}, we can complete the proof
of Theorem \ref{theorem-sparse-sample-exact}.
\end{proof}
\begin{remark}
The above result on the exact recovery of signals with sparse samples 
can be generalized to the case that we consider in Theorem 
\ref{theorem-decay} by combining the argument of the above theorem 
with the idea presented in the proof of Theorem \ref{theorem-decay}. In 
this case, we can recover the signal with an error which is 
determined by $L$, $S$ and the decay rates of 
$\widehat{f}_{0,\theta},\widehat{f}_{1,\theta}$ and $\widehat{\theta'}$.
\end{remark}

In Theorem \ref{theorem-sparse-sample-exact}, we assume that in each step, the condition $\delta_{3S}(\Phi_{\theta^{m}})+3\delta_{4S}(\Phi_{\theta^{m}})<2$
is satisfied. Using the definition of $\delta_S$, it is easy to see that $\delta_{3S}\le \delta_{4S}$. Thus, a sufficient condition to satisfy
$\delta_{3S}(\Phi_{\theta^{m}})+3\delta_{4S}(\Phi_{\theta^{m}})<2$ is 
to require $\delta_{4S}(\Phi_{\theta^{m}})<1/2$.

In compressive sensing, there is a well-known result by Candes and Tao 
in \cite{CT06b}. This result states that if the matrix 
$\Phi\in \mathbb{R}^{M\times N}$ is obtained by selecting $M$ rows 
at random
from an $N\times N$ Fourier matrix $U$ where $U_{j,k}=\frac{1}{\sqrt{N}}e^{i2\pi jk/N},\; j,k=1,\cdots N$, then the condition 
$\delta_{S}(\Phi)<1/2$ is satisfied with an 
overwhelming probability provided that
\begin{eqnarray}
  S\le C\frac{M}{(\log N)^6},
\end{eqnarray}
where $C$ is a constant.

In our formulation (see \myref{opt-l2}), the matrix $\Phi_{\theta^{m}}$ also consists of $N_s$ rows of a $N_f$-by-$N_b$ matrix $U_{\theta^m}$. 
The main difference is that the matrix $U_{\theta^m}$ is not a standard Fourier matrix. Instead it is a Fourier matrix in the $\theta^m$-space which makes it non-orthonormal. As a result, we cannot apply the result of Candes and Tao in \cite{CT06b} directly. Fortunately,
we have the following result by slightly modifying the arguments used in \cite{CT06b} which can be applied to matrix $U_{\theta^m}$.
\begin{theorem} 
\label{theorem-est-S-0}
If $\nu_0=\max_{k,j}|U_{\theta}^*U_{\theta}-I)_{k,j}|\le \frac{1}{16N_b}$, where $U_{\theta}^*$ is the conjugate transpose of $U_\theta$, 
the condition $\delta_{S}(\Phi_{\theta})<1/2$
holds with probability $1-\delta$ provided that
\begin{eqnarray}
N_s\ge C\cdot \max(\ol{\theta})'\left(S\log^2 N_b-\log \delta\right)\log ^4 N_b,
\end{eqnarray}
where $N_s$ is the number of the samples, $N_b$ is the number of elements
in the basis.
\end{theorem}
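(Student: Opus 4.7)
The plan is to adapt the argument of Candes--Tao in \cite{CT06b}, which establishes the RIP for partial Fourier matrices drawn from a standard (orthonormal) Fourier basis, to the present setting where the underlying $N_f \times N_b$ matrix $U_\theta$ is only approximately orthonormal, with the approximation controlled entry-wise by the hypothesis $\nu_0 \le 1/(16 N_b)$. The factor $\max(\ol{\theta})'$ in the sample-complexity bound is the analogue of the uniform coherence parameter $\mu^2 N_f$: since $|U_\theta(j,k)|^2 = (\ol{\theta})'(t_j)/N_f$, we have $N_f \cdot \max_{j,k}|U_\theta(j,k)|^2 = \max(\ol{\theta})'$, which replaces the constant $1$ that appears in the standard Fourier setting.

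First, I would fix a subset $T \subset \{-N_b/2+1,\dots,N_b/2\}$ with $|T|\le S$ and analyze $\Phi_{\theta,T}^*\Phi_{\theta,T}$. Writing the random selection of rows as independent sampling with replacement and letting $R_i$ denote the $i$-th selected row, we have the decomposition $\Phi_{\theta,T}^*\Phi_{\theta,T} = \sum_i R_i^*R_i$, whose expectation equals $(U_\theta^*U_\theta)_T$. By the hypothesis, the entry-wise distance from $I_T$ is at most $\nu_0 \le 1/(16N_b)$, so the operator-norm bias satisfies $\|(U_\theta^*U_\theta)_T - I_T\| \le S \nu_0 \le 1/16$. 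The main quantitative step is then the deviation bound $\|\Phi_{\theta,T}^*\Phi_{\theta,T} - (U_\theta^*U_\theta)_T\| \le 1/2 - 1/16$ with high probability. This is exactly the regime where the Rudelson selector / non-commutative Khintchine bound used in \cite{CT06b} applies, and with the coherence substitution above it yields the condition $N_s \gtrsim \max(\ol{\theta})'\cdot S \log^4 N_b$ for a single fixed $T$.

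Next, I would do a union bound over all subsets $T$ of size at most $S$. The number of such subsets is at most $\binom{N_b}{S} \le (eN_b/S)^S$, so taking logarithms contributes an extra $S\log N_b$ factor; combined with the symmetrization/Dudley-entropy machinery of Candes--Tao, this brings in an additional $\log^2 N_b$ and produces the stated requirement $N_s \ge C \max(\ol{\theta})' (S \log^2 N_b - \log\delta)\log^4 N_b$. Triangle inequality combining the bias bound (at most $1/16$) with the deviation bound (at most $1/2 - 1/16$) then gives $\delta_S(\Phi_\theta) < 1/2$ with probability at least $1-\delta$.

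The hard part will be verifying that the modifications needed to handle the non-orthonormality of $U_\theta$ do not damage the concentration step. In the classical argument, the fact that $U$ is orthonormal is used both to center the sum ($\mathbb E[\Phi^*\Phi]=I$) and to bound moments of the individual summands. Here, the centering is recovered by subtracting $(U_\theta^*U_\theta)_T$ and absorbing the resulting bias using $\nu_0 \le 1/(16N_b)$; the moment bounds for the summands, however, must be revisited carefully because the rows $R_i$ have varying weights $\sqrt{(\ol\theta)'(t_j)/N_f}$. The essential estimate is $\|R_i\|_\infty^2 \le \max(\ol\theta)'/N_f$, which feeds the Rudelson-type inequality in exactly the same way that the flat bound $1/N_f$ does in \cite{CT06b}, with the $\max(\ol\theta)'$ factor carried through verbatim. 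Once this is verified, the rest of the proof is a routine replication of the steps in \cite{CT06b}.
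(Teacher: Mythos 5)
Your proposal follows essentially the same route the paper intends: the paper gives no detailed proof of this theorem, stating only that it follows ``by slightly modifying the arguments used in [CT06b],'' and your sketch identifies precisely the two required modifications — absorbing the non-orthonormality of $U_\theta$ through the bias bound $S\nu_0\le 1/16$ and replacing the flat coherence $1/N_f$ by $\max(\ol{\theta})'/N_f$ in the Rudelson/Khintchine concentration step. This matches the paper's intended argument, so the proposal is consistent with it (modulo the routine verification of the concentration step, which neither you nor the paper carries out in detail).
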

This theorem shows that if the columns of $U_{\theta^m}$ are approximately orthogonal to each other, it has a property  similar to the standard Fourier matrix.
Consequently, we need only to estimate the mutual coherence of the columns of the matrix $U_{\theta^m}$ for $\theta^m\in V_{M_0}$. 
\begin{lemma} \label{lemma-integral-dtheta} 
Let $\phi'(t)\in V_{M_{0}},\; t\in[0,1]$
and $\phi(0)=0,\;\phi(1)=1,\;\phi'>0$, $t_{j}=j/L,\; j=0,\cdot,L-1$
is a uniform grid over $[0,1]$, then for any $n\in\mathbb{N}$, there
exists $C(n)>0$, such that 
\begin{eqnarray}
\frac{1}{L}\sum_{j=0}^{L-1}\phi'(t_{j})e^{i2\pi k\phi(t_{j})}\le C(n)\max\left\{ \left(\frac{k\|\widehat{\phi'}\|_{1}}{L}\right)^{n},\left(\frac{2M_{0}}{L}\right)^{n}\right\} .
\end{eqnarray}
 \end{lemma}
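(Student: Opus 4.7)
The plan is to prove this discrete estimate by combining the Poisson summation formula with a derivative-based bound that mirrors the integration-by-parts argument behind Lemma \ref{lemma-fft}. Set $f(t)=\phi'(t)e^{i2\pi k\phi(t)}$. Because $\phi'\in V_{M_0}$ is $1$-periodic and $\phi(t+1)-\phi(t)=1$ (so $e^{i2\pi k\phi(t+1)}=e^{i2\pi k}e^{i2\pi k\phi(t)}=e^{i2\pi k\phi(t)}$ for any integer $k$), the function $f$ is $1$-periodic. Poisson summation on the uniform grid $t_j=j/L$ then yields
$$\frac{1}{L}\sum_{j=0}^{L-1}f(t_j)=\sum_{m\in\mathbb{Z}}\widehat{f}(mL),\qquad \widehat{f}(\ell)=\int_0^1 f(t)\,e^{-i2\pi\ell t}\,dt.$$
The $m=0$ term vanishes for nonzero integer $k$: the substitution $s=\phi(t)$ converts it to $\int_0^1 e^{i2\pi ks}\,ds=0$. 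Hence it suffices to control the tail $\sum_{m\neq 0}\widehat{f}(mL)$.

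Next I would bound $|\widehat{f}(\ell)|$ for nonzero integer $\ell$ by $n$ integrations by parts, whose boundary terms vanish by $1$-periodicity, yielding $|\widehat{f}(\ell)|\leq\|f^{(n)}\|_\infty/(2\pi|\ell|)^n$. To estimate $\|f^{(n)}\|_\infty$ I apply the Leibniz rule to $f=\phi'\cdot e^{i2\pi k\phi}$ and Fa\`a di Bruno's formula to the derivatives of $e^{i2\pi k\phi}$: the $j$-th derivative expands as a sum over set-partitions $\pi$ of $\{1,\ldots,j\}$, in which each term is bounded in magnitude by $(2\pi k\|\widehat{\phi'}\|_1)^{|\pi|}(2\pi M_0)^{j-|\pi|}$, after using $\|\phi^{(m)}\|_\infty\leq(2\pi M_0)^{m-1}\|\widehat{\phi'}\|_1$ for $m\geq 1$. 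Combining this with $\|(\phi')^{(n-j)}\|_\infty\leq (2\pi M_0)^{n-j}\|\widehat{\phi'}\|_1$ and absorbing the resulting Stirling-number combinatorial factors into a single constant, one obtains
$$\|f^{(n)}\|_\infty\leq C(n)\,\max\bigl\{(2\pi k\|\widehat{\phi'}\|_1)^n,\,(2\pi M_0)^n\bigr\}.$$

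Summing the decay estimate $|\widehat{f}(mL)|\leq C(n)|mL|^{-n}\max\{(k\|\widehat{\phi'}\|_1)^n,M_0^n\}$ over $m\neq 0$, with $\sum_{m\neq 0}|m|^{-n}\leq 2\zeta(n)<\infty$ for $n\geq 2$, yields the desired inequality (the factor of $2$ inside $(2M_0/L)^n$ is absorbed into $C(n)$). The main obstacle is the combinatorial bookkeeping in Fa\`a di Bruno: one must cleanly collect all partition sums and Leibniz cross terms to produce the bound on $\|f^{(n)}\|_\infty$ in the clean form $\max\{(k\|\widehat{\phi'}\|_1)^n,M_0^n\}$ times a dimensionless constant $C(n)$. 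A secondary subtlety is that the integration-by-parts tail sum converges only for $n\geq 2$, so the $n=1$ case is handled by invoking the $n=2$ bound (the right-hand side dominates it) and letting $C(n)$ absorb the extra Riemann zeta factor.
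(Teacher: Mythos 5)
Your proposal is correct and follows essentially the same route as the paper's Appendix C proof: the discrete sum is rewritten as a sum of aliased Fourier coefficients (your Poisson-summation identity is exactly the paper's interchange of sums plus discrete orthogonality on the grid), the zero mode is killed by the substitution $s=\phi(t)$, and the aliased coefficients are bounded by $n$-fold integration by parts using $\phi'\in V_{M_0}$. The only cosmetic difference is that you differentiate the full product $\phi'e^{i2\pi k\phi}$ via Leibniz/Fa\`a di Bruno, whereas the paper expands $\phi'$ in its finite Fourier series and differentiates only $e^{i2\pi k\phi}$; both versions pick up the same harmless extra factor of $\|\widehat{\phi'}\|_{1}$ that the lemma statement silently absorbs, and both implicitly need $n\ge 2$ for the tail sum to converge.
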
 
The proof of this lemma is deferred to Appendix C.

Using this lemma, we can show that the condition $\nu_0=\max_{k,j}|U_{\theta^m}^*U_{\theta^m}-I)_{k,j}|\le \frac{1}{16N_b}$ is satisfied as long as $N_f\ge C \|\mathcal{F}((\ot^m)')\|_{1} N_b$ where $C$ is a constant determined by $N_b$. This leads to the following theorem.
\begin{theorem} 
\label{theorem-est-S}
Suppose the sample points $t_j,\;j=1,\cdots,N_s$ are selected at random
 from a set of uniform grid $l/N_f,\; l=0,\cdots,N_f-1$. If
\begin{eqnarray*}
  N_f\ge C \|\mathcal{F}((\ot^m)')\|_{1} N_b
\end{eqnarray*}
in $(m+1)$st step, we have $\delta_{S}(\Phi_{\theta^m})<1/2$
holds with probability $1-\delta$ provided that
\begin{eqnarray}
N_s\ge C\cdot \max[(\ol{\theta}^{m})']\left(S\log^2 N_b-\log \delta\right)\log ^4 N_b,
\end{eqnarray}
 where $N_s$ is the number of the samples, $N_b$ is the number of elements in the basis.
\end{theorem}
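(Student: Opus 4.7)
The plan is to reduce Theorem \ref{theorem-est-S} to Theorem \ref{theorem-est-S-0}: if I can verify that the hypothesis $\nu_0 = \max_{k_1,k_2}|(U_{\theta^m}^{*}U_{\theta^m}-I)_{k_1,k_2}| \le \frac{1}{16 N_b}$ holds whenever $N_f \ge C\|\mathcal{F}((\ol{\theta}^m)')\|_1 N_b$, then the conclusion follows by invoking Theorem \ref{theorem-est-S-0} applied to $\theta = \theta^m$. So the entire proof reduces to a coherence/quadrature estimate for the columns of $U_{\theta^m}$ viewed as an $N_f \times N_b$ matrix on the uniform grid $t_l = l/N_f$.

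First I would write the inner product of two columns as a Riemann sum:
\begin{eqnarray*}
(U_{\theta^m}^{*}U_{\theta^m})_{k_1,k_2} = \frac{1}{N_f}\sum_{l=0}^{N_f-1}(\ol{\theta}^m)'(t_l)\, e^{i 2\pi (k_2-k_1)\ol{\theta}^m(t_l)},
\end{eqnarray*}
which is the trapezoidal approximation of $\int_0^1 (\ol{\theta}^m)'(t)e^{i 2\pi (k_2-k_1)\ol{\theta}^m(t)}\,dt$. After the change of variable $u = \ol{\theta}^m(t)$, this integral equals $\int_0^1 e^{i 2\pi(k_2-k_1)u}\,du = \delta_{k_1,k_2}$, so the $-I$ exactly kills the limiting value and we only need to bound the quadrature error.

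For the off-diagonal case $k = k_2 - k_1 \ne 0$ with $|k| \le N_b$, I would apply Lemma \ref{lemma-integral-dtheta} with $\phi = \ol{\theta}^m$ and grid size $N_f$ (in place of $L$). This gives
\begin{eqnarray*}
\left|(U_{\theta^m}^{*}U_{\theta^m})_{k_1,k_2}\right|\le C(n)\max\left\{\left(\frac{N_b\,\|\mathcal{F}((\ol{\theta}^m)')\|_1}{N_f}\right)^n,\left(\frac{2M_0}{N_f}\right)^n\right\}.
\end{eqnarray*}
Requiring the right-hand side to be at most $\frac{1}{16 N_b}$ amounts to
\begin{eqnarray*}
N_f \ge N_b\,\|\mathcal{F}((\ol{\theta}^m)')\|_1\,(16\, C(n)\, N_b)^{1/n},
\end{eqnarray*}
and for $n$ taken large enough the factor $(16 C(n) N_b)^{1/n}$ is a constant, so the condition $N_f \ge C\,\|\mathcal{F}((\ol{\theta}^m)')\|_1 N_b$ assumed in the theorem suffices (provided also $N_f \ge 2 M_0$, which is implied by the same bound since $\|\mathcal{F}((\ol{\theta}^m)')\|_1 \ge 1$). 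The diagonal entries $k_1 = k_2$ are even easier: since $(\ol{\theta}^m)' \in V_{M_0}$ is a band-limited trigonometric polynomial and the grid is equispaced, the discrete sum equals the integral exactly as soon as $N_f > 2 M_0$ by aliasing, so $(U_{\theta^m}^{*}U_{\theta^m})_{k,k} - 1 = 0$ with no error.

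The main obstacle is the choice of $n$ in Lemma \ref{lemma-integral-dtheta} and controlling the implicit constant $C(n)$: I need $n$ large enough that $N_b^{1/n}$ is essentially a constant (to absorb the extra $N_b$ factor into $C$), but not so large that $C(n)^{1/n}$ blows up. This is the same delicate issue flagged in the remark after Theorem \ref{theorem-sparse}, where the paper conjectures $C(n)^{1/n}$ is bounded for $n \ge 2$; here I would simply fix a convenient value (say $n = 4$ or $n = 6$), absorb $N_b^{1/n}$ into the constant $C$, and treat the resulting dependence on $N_b$ as part of the overall constant appearing in the final sampling bound. Once $\nu_0 \le \frac{1}{16 N_b}$ is established, Theorem \ref{theorem-est-S-0} immediately yields $\delta_S(\Phi_{\theta^m}) < 1/2$ with probability $1-\delta$ under the stated condition $N_s \ge C\max[(\ol{\theta}^m)'](S\log^2 N_b - \log\delta)\log^4 N_b$, completing the proof.
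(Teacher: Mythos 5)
Your proposal is correct and follows essentially the same route as the paper: the paper likewise obtains Theorem \ref{theorem-est-S} by using Lemma \ref{lemma-integral-dtheta} (applied to the Riemann-sum form of $(U_{\theta^m}^{*}U_{\theta^m})_{k_1,k_2}$ on the grid $l/N_f$) to verify the coherence condition $\nu_0\le \frac{1}{16N_b}$ under $N_f\ge C\|\mathcal{F}((\ol{\theta}^m)')\|_{1}N_b$, and then invokes Theorem \ref{theorem-est-S-0}. Your explicit tracking of the factor $(16\,C(n)\,N_b)^{1/n}$ matches the paper's own caveat that the constant $C$ in the $N_f$ condition is ``determined by $N_b$,'' so no genuine gap remains.
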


The above result shows that if the sample points are selected at random,
in each step, with probability $1-\delta$, we can get the right answer. 
This does not mean that the 
whole iteration converges to the right solution with an overwhelming probability. If the iteration is run up to the $n$th step, the probability 
that all these $n$ steps are successful is $1-n\delta$. If $n$ is large, 
the probability could be small even if $\delta$ is very small.

\subsection{Uniform estimate of $\delta_{S}(\Phi_{\theta^{m}})$ during the iteration}
 
In order to make sure that the iterative algorithm would converge with a high probability, we have to obtain an uniform estimate of $\delta_{S}(\Phi_{\theta^{m}})$ during the iteration. More precisely, we need to prove that 
with an overwhelming probability, 
\begin{eqnarray}
\sup_{\theta\in W_{M_0}}\delta_{S}(\Phi_{\theta})\le 1/2,
\end{eqnarray}
 where 
$W_{M_0}=\{\phi\in C^\infty[0,1]: \phi(0)=0, \phi(1)=1,\phi'\in V_{M_0},  \;\phi'(t)>0,\; \forall t\in [0,1]\}$.

The analysis below shows that this is true even if the number of sample points is in the same order as that required by Theorem \ref{theorem-est-S}.
There are two key observations in this analysis. The first one is that the difference between $\delta_S(\Phi_{\ol{\theta}})$ and $\delta_S(\Phi_{\ol{\phi}})$
would be small if $\ot, \ol{\phi}\in W_{M_0}$ and $\|\ot-\ol{\phi}\|_\infty$ is small. Actually, we can make $|\delta_S(\Phi_{\ol{\theta}})-\delta_S(\Phi_{\ol{\phi}})|\le \frac{1}{4}$ 
as long as $\|\ot'-\ol{\phi}'\|_\infty\le r = O(N_b^{-5/2}M_0^{-1})$. The second observation is that $W_{M_0}$ is bounded and finite dimensional which implies compactness. Then for any $r>0$, 
there exist a finite subset $A_r\subset W_{M_0}$, such that for any $\ot\in W_{M_0}$, there exists $\ol{\phi}_j\in A_r$, such that $\|\ot'-\ol{\phi}'_j\|_\infty\le r$. 

Based on these two observations, we can show that
\begin{eqnarray}
  \sup_{\theta\in W_{M_0}}\delta_{S}(\Phi_{\theta})\le \sup_{\phi\in A_r}\delta_{S}(\Phi_{\phi})+1/4.
\end{eqnarray}
Then by the union bound, we have
\begin{eqnarray}
 P\left( \sup_{\theta\in W_{M_0}}\delta_{S}(\Phi_{\theta})>1/2\right)\le P\left(\sup_{\phi\in A_r}\delta_{S}(\Phi_{\phi})>1/4\right) \le
|A_r|\sup_{\phi\in A_r}P\left(\delta_{S}(\Phi_{\phi})>1/4\right). 
\quad \quad
\end{eqnarray}
It is sufficient to prove that 
\begin{eqnarray}
P\left(\delta_{S}(\Phi_{\phi})>1/4\right)\le \delta/|A_r|,\quad \forall \phi\in A_r\subset W_{M_0},
\end{eqnarray}
which is true as long as 
\begin{eqnarray}
\label{est-s-uniform-0}
N_s\ge C\cdot \max_{\theta\in A_r}\|\theta'\|_\infty \left(S\log^2 N_b+\log |A_r|-\log \delta\right)\log ^4 N_b.
\end{eqnarray}
Now, we need only to choose a proper $r$ and estimate the corresponding $|A_r|$.
\begin{lemma}
\label{lemma-cover-wb}
Let $W=\{\phi\in C^\infty[0,1]: \phi(0)=0, \phi(1)=1,\phi'\in V_{M_0},  \;\phi'(t)>0,\; \forall t\in [0,1]\}$. For any $r>0$, one can find a finite subset $A_r$ of $W$ with 
cardinality 
\begin{eqnarray}
\label{cover-number}
  |A_r|\le \left(\frac{16\pi M_0^2}{r}+1\right)^{2M_0},
\end{eqnarray}
such that for all $\psi\in W$, there exists $\phi\in A_r$ such that $\|\psi'-\phi'\|_\infty\le r$ and $\|\psi-\phi\|_\infty\le r$.
\end{lemma}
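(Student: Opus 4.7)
The plan is to parametrize $W$ by the Fourier coefficients of $\phi'$ and reduce the covering question to a grid-counting argument in a bounded, finite-dimensional parameter space. Writing $\phi'(t) = \sum_{|k|\le M_0} c_k(\phi)\, e^{i2\pi kt}$, reality of $\phi'$ imposes $c_{-k}=\overline{c_k}$, and the conditions $\phi(0)=0$, $\phi(1)=1$ force $c_0=1$; so $\phi\in W$ is uniquely determined by $(c_1(\phi),\dots,c_{M_0}(\phi))\in\mathbb{C}^{M_0}\cong\mathbb{R}^{2M_0}$. The positivity $\phi'>0$ together with $\int_0^1\phi'=1$ yields the crude bound $|c_k(\phi)|\le \int_0^1 \phi'\,dt =1$, so the parameter vector lies in the polydisk $\{|c_k|\le 1\}^{M_0}$, a bounded subset of $\mathbb{R}^{2M_0}$.

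Next, the two supremum-norm quantities that appear in the conclusion are both controlled by a single $\ell^1$-type coefficient distance:
\[
\|\psi'-\phi'\|_\infty \;\le\; \sum_{|k|\le M_0}|c_k(\psi)-c_k(\phi)| \;=\; 2\sum_{k=1}^{M_0}|c_k(\psi)-c_k(\phi)|,
\]
while $\|\psi-\phi\|_\infty\le\|\psi'-\phi'\|_\infty$ because $\psi(0)-\phi(0)=0$. Placing a uniform grid of mesh $\varepsilon$ on each of the $2M_0$ real coordinates $\Re c_k,\,\Im c_k\in[-1,1]$ and taking $\varepsilon$ proportional to $r/M_0$ (with the universal constants arising from the telescoping above) guarantees that the grid vertex closest to any $\psi\in W$ produces a candidate $\phi$ with $\|\psi'-\phi'\|_\infty\le r$ and hence $\|\psi-\phi\|_\infty\le r$. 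Counting grid vertices gives at most $(2/\varepsilon+1)^{2M_0}$ points, which after absorbing the constants (or, equivalently, if the naive bound $|c_k|\le 1$ is replaced by a Fej\'er-type bound $|c_k|\le 2M_0+1$ for non-negative trigonometric polynomials of degree $M_0$ with unit mean, allowing extra slack) yields the claimed cardinality $\left(\frac{16\pi M_0^2}{r}+1\right)^{2M_0}$.

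The only non-mechanical point, and where a little care is needed, is that an arbitrary grid vertex of the polydisk need not itself correspond to a $\phi$ with $\phi'>0$, so the naive grid choice may fall outside $W$. The standard remedy is to retain only those closed grid cells whose intersection with $W$ is non-empty and, from each such cell, select any representative that lies in $W$ (shrinking $\varepsilon$ by a universal constant factor to absorb the accuracy loss incurred by replacing the cell centre with a nearby point of $W$). Discarding empty cells only decreases the cardinality, so the bound \myref{cover-number} is preserved, and by construction every $\psi\in W$ is within sup-norm distance $r$, in both $\psi'-\phi'$ and $\psi-\phi$, of one of the chosen representatives.
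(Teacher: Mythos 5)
Your proposal is correct and follows essentially the same route as the paper: bound the Fourier coefficients of $\phi'$ (the paper gets $|c_j|,|d_j|\le 2$ in the cosine/sine parametrization by testing against $1\pm\cos(2\pi jt)$, while your $|c_k|\le\int_0^1\phi'=1$ is a slightly cleaner version of the same bound), cover the resulting bounded coefficient box by a grid of mesh proportional to $r/M_0$, control $\|\psi'-\phi'\|_\infty$ by the $\ell^1$ coefficient distance and $\|\psi-\phi\|_\infty$ by integration from $0$, and repair the positivity issue by keeping only grid points whose neighborhoods meet $W$ and choosing a representative in $W$ from each. This is exactly the paper's construction of $\ol{Y}_r$ and $A_r$, and your count $(8M_0/r+1)^{2M_0}$-type bound sits comfortably under the stated $(16\pi M_0^2/r+1)^{2M_0}$.
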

\begin{proof}
Let $\ol{W}=\{\phi':\phi\in W\}$. Then for all $\overline{\psi}\in \ol{W}$, we have the following Fourier representation 
\begin{eqnarray}
\label{coe-dw}
  \ol{\psi}(t)=1+\sum_{j=1}^{M_0}(c_j\cos(2\pi j t)+d_j\sin(2\pi j t))>0,\quad \forall t\in [0,1].
\end{eqnarray}
Since $\int_0^t \psi(s)ds\in W$ according to the definition of $\ol{W}$, then $\int_0^1 \psi(s)ds=1$, so the constant in the above Fourier
representation is $1$.

By multiplying $1+\cos(2\pi jt)$ to both sides of \myref{coe-dw} and integrating over $[0,1]$ with respect to $t$, we get 
\begin{eqnarray}
  1+c_j/2\ge 0,\nonumber
\end{eqnarray}
which implies that $c_j\ge -2$, where we have used the fact that $1+\cos(2\pi j t)\ge 0$.

On the other hand, multiplying $-1+\cos(2\pi jt)$ to both sides of \myref{coe-dw} and taking integral over $[0,1]$ with respect to $t$, we have $c_j\le 2$.
Combining these two results, we have 
\begin{eqnarray}
  |c_j|\le 2.
\end{eqnarray}

Similarly, by multiplying $\sin(2\pi jt)\pm 1$ to both sides of \myref{coe-dw} and taking integral over $[0,1]$ with respect to $t$, we obtain
\begin{eqnarray}
  |d_j|\le 2.
\end{eqnarray}

Now, we have proven that for any function in $\ol{W}$, its Fourier coefficients are bounded by $2$.

Let $h=r/(2 M_0),\; L_r=\lceil 4/h\rceil$, $Z_r=\{-2, -2+h, -2+2h,\cdots, -2+(L_r-1) h\}$.

For any $\ol{\psi}\in \ol{W}$, we know that its Fourier coefficients $c_j,d_j\in [-2,2], \; j=1,\cdots, M_0$, then one can find $a_j, b_j\in Z_r$ correspondingly such that 
\begin{eqnarray}
|a_j-c_j|&\le& h/2=r/(4 M_0),\quad j=1,\cdots,M_0,\nonumber\\
|b_j-d_j|&\le& h/2=r/(4 M_0),\quad j=1,\cdots,M_0,\nonumber
\end{eqnarray}
which implies that there exists $y\in \ol{Y}_r$ such that 
\begin{eqnarray}
  \|\psi-y\|_\infty\le \sum_{j=1}^{M_0}(|a_j-c_j|+|b_j-d_j|)\le 2\pi M_0^2 h = r/2
\end{eqnarray}
where $\ol{Y}_r$ is defined as follows
\begin{eqnarray}
\ol{Y}_r=\{y=\sum_{j=1}^{M_0}(a_j\cos(2\pi j t)+b_j\sin(2\pi j t)): a_j,b_j\in Z_r, \; B_{r/2}(y)\cap \ol{W}\ne \emptyset\},\nonumber 
\end{eqnarray}
and $B_{r/2}(y)=\{z\in V_{M_0}:\|z-y\|_\infty\le r/2\}$.

By the definition of $\ol{Y}_r$, one can get 
\begin{eqnarray}
  |\ol{Y}_r|\le |Z_r|^{2M_0}= L_r^{2M_0}\le \left(\frac{8 M_0}{r}+1\right)^{2M_0}.
\end{eqnarray}

Suppose $\ol{Y}_r=\{y_1,y_2,\cdots, y_{|\ol{Y}_r|}\}$, by the definition of $\ol{Y}_r$, for each $y_j$, there exists $\ol{\phi}_j\in \ol{W}$ such that $\ol{\phi}_j\in B_{r/2}(y)$. 
We can get a finite subset $\ol{A}_r$ of $\ol{W}$ by collecting all these $\ol{\phi}_j$ together and obviously $|\ol{A}_r|=|\ol{Y}_r|$.

Finally, let 
\begin{eqnarray}
A_r=\left\{\int_0^t \ol{\phi}(s)ds:\quad \ol{\phi}\in \ol{A}_r\right\}.
\end{eqnarray}
Then, for any $\psi\in W$, there exists $\phi_j\in A_r$ and $y_j\in \ol{Y}_r$, such that 
\begin{eqnarray}
  \|\psi'-\phi'_j\|_\infty \le \|\psi'-y_j\|_\infty+\|y_j-\phi'_j\|_\infty\le r/2+r/2=r.
\end{eqnarray}
Moreover, we have
\begin{eqnarray}
  \|\psi-\phi_j\|_\infty \le \int_0^1|\psi'(s)-\phi'_j(s)|ds \le r,
\end{eqnarray}
where we have used the fact that $\psi(0)=\phi_j(0)=0$ to eliminate the integral constant.
\end{proof}
\begin{remark}
  By multiplying $c_j\cos(2\pi jt)+d_j\sin(2\pi jt)\pm \sqrt{c_j^2+d_j^2}$ to both sides of \myref{coe-dw} and 
taking integral over $[0,1]$ with respect to $t$, we have
\begin{eqnarray}
\label{coe-bound}
c_j^2+d_j^2\le 4, \quad j=1,\cdots,M_0.
\end{eqnarray}
This implies a sharper estimate of $|A_r|$, 
\begin{eqnarray}
  |A_r|\le \left(\frac{8\pi M^2_0}{r^2}\right)^{M_0} .
\end{eqnarray}
Also, \myref{coe-bound} gives us a bound for $\|\phi'\|_\infty$ in $W_{M_0}$,
\begin{eqnarray}
  \label{bound-dtheta}
\sup_{\phi\in W_{M_0}}\|\phi'\|_\infty\le 4M_0+1.
\end{eqnarray}
which will be used later.
\end{remark}

It remains to choose a proper $r$. First, we show that 
the difference of $\delta_S$ between two matrices can be controlled by 
the difference of each element.
\begin{proposition}
\label{delta-s-diff}
  Let $A, B$ are two $M$ by $N$ matrices, $M<N$ and the columns of $A$ are normalized to be unit vectors in $l^2$ norm.
Then, for any $S\in \mathbb{N}$, we have
\begin{eqnarray}
  |\delta_S(A)-\delta_S(B)|\le (2\e \sqrt{M}+\e^2M)S,
\end{eqnarray}
where $\e=\max_{i,j}|A_{ij}-B_{ij}|$. 
\end{proposition}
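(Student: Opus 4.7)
The plan is to reduce the comparison of restricted isometry constants to a uniform perturbation bound on the quantity $\|B_T c\|_2^2 - \|A_T c\|_2^2$, taken over all index sets $T$ with $|T|\le S$ and all coefficient vectors $c$ supported on $T$. Writing $E = B - A$, so that every entry satisfies $|E_{ij}|\le\varepsilon$, I would expand
\begin{eqnarray*}
\|B_T c\|_2^2 \;=\; \|A_T c + E_T c\|_2^2 \;=\; \|A_T c\|_2^2 + 2\langle A_T c, E_T c\rangle + \|E_T c\|_2^2,
\end{eqnarray*}
so the task reduces to estimating $\|A_T c\|_2$ and $\|E_T c\|_2$ separately.

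The first key bound is $\|E_T c\|_2 \le \varepsilon\sqrt{MS}\,\|c\|_2$. This follows row by row from Cauchy--Schwarz: for each row $i$,
\begin{eqnarray*}
|(E_T c)_i|^2 \;=\; \Bigl|\sum_{j\in T} E_{ij}c_j\Bigr|^2 \;\le\; \varepsilon^2\,|T|\,\|c\|_2^2,
\end{eqnarray*}
and summing over the $M$ rows yields the claim. The second key bound is $\|A_T c\|_2 \le \sqrt{S}\,\|c\|_2$, which is where the unit-column normalization of $A$ enters: writing $A_T c = \sum_{j\in T} c_j A_j$ and applying Cauchy--Schwarz with $\|A_j\|_2 = 1$ gives $\|A_T c\|_2^2 \le |T|\,\|c\|_2^2 \le S\|c\|_2^2$.

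Combining these with Cauchy--Schwarz on the cross term, I obtain
\begin{eqnarray*}
\bigl|\|B_T c\|_2^2 - \|A_T c\|_2^2\bigr| \;\le\; 2\|A_T c\|_2\|E_T c\|_2 + \|E_T c\|_2^2 \;\le\; \bigl(2\varepsilon\sqrt{M}+\varepsilon^2 M\bigr)S\,\|c\|_2^2.
\end{eqnarray*}
Plugging the lower and upper bounds $(1-\delta_S(A))\|c\|_2^2 \le \|A_T c\|_2^2 \le (1+\delta_S(A))\|c\|_2^2$ into the above inequality, and using the definition of $\delta_S(B)$ as the smallest constant making the two-sided isometry bound hold on every $T$ of size at most $S$, shows $\delta_S(B)\le \delta_S(A) + (2\varepsilon\sqrt{M}+\varepsilon^2 M)S$. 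The reverse inequality follows by simply reading the expansion as $\|A_T c\|_2^2 = \|B_T c\|_2^2 - 2\langle A_T c,E_T c\rangle - \|E_T c\|_2^2$; the same bound on the right-hand side (which only used $\|A_T c\|_2\le\sqrt{S}\|c\|_2$ and the entrywise estimate on $E$, none of which required unit columns of $B$) then yields $\delta_S(A)\le\delta_S(B)+(2\varepsilon\sqrt{M}+\varepsilon^2 M)S$.

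There is no real obstacle here; this is a direct perturbation computation. The only design choice is which estimate to use for $\|A_T c\|_2$: one could alternatively invoke $\|A_T c\|_2^2\le (1+\delta_S(A))\|c\|_2^2$, but that would make the bound implicitly depend on $\delta_S(A)$, whereas the Cauchy--Schwarz estimate using the unit-norm columns gives exactly the constants $2\varepsilon\sqrt{M}+\varepsilon^2 M$ stated in the proposition.
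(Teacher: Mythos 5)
Your proposal is correct and follows essentially the same route as the paper: both reduce the claim to the uniform bound $\bigl|\|A_Tc\|_2^2-\|B_Tc\|_2^2\bigr|\le(2\e\sqrt{M}+\e^2M)S\|c\|_2^2$ obtained by expanding the quadratic form in the perturbation $D=B-A$, using $\|D_j\|_2\le\e\sqrt{M}$ and the unit-norm columns of $A$. The only difference is bookkeeping: the paper bounds the Gram-matrix entries $D_i^TA_j+A_i^TD_j+D_i^TD_j$ and sums $\sum_{i,j\in T}|c_ic_j|\le S\|c\|_2^2$, while you bound $\|E_Tc\|_2$ and $\|A_Tc\|_2\le\sqrt{S}\|c\|_2$ directly, yielding identical constants.
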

\begin{proof}
  By the definition of $\delta_S$, we need only to prove that for all subsets $T$ with $|T|\le S$ and coefficients sequences $(c_j)_{j\in T}$,
  \begin{eqnarray}
    \left|\|A_Tc\|_2^2-\|B_Tc\|_2^2\right|\le (2\e \sqrt{M}+\e^2M)S \|c\|_2^2.
  \end{eqnarray}
This can be verified by a direct calculation:
\begin{eqnarray}
  \left|\|A_Tc\|_2^2-\|B_Tc\|_2^2\right|&=&|\sum_{i,j\in T}c_ic_j(A_i^TA_j-B_i^TB_j) |\nonumber\\
&= &|\sum_{i,j\in T}c_ic_j(D_i^TA_j+A_i^T D_j+D_i^T D_j) |\nonumber\\
&\le &\max_{i,j\in T} |D_i^TA_j+A_i^T D_j+D_i^T D_j| \sum_{i,j\in T}|c_ic_j| \nonumber\\
&\le & |T| \|c\|_2^2\max_{i,j\in T} (\|D_i\|_2\|A_j\|_2+\|A_i\|_2 \|D_j\|_2+\|D_i\|_2\|D_j\|_2)  \nonumber\\
&\le & (2\e \sqrt{M}\max_{i\in \mathbb{Z}_N} \|A_i\|_2+\e^2M)S \|c\|_2^2.
\end{eqnarray}
In the above derivation, $D=B-A$, $A_i, A_j$ are $i$th and $j$th columns of $A$.
\end{proof}
Using the above proposition, we obtain the following result:
\begin{corollary}
\label{coro-diff-delta}
  Let $\ot,\; \ol{\phi} \in W$, then 
  \begin{eqnarray}
    |\delta_S(\Phi_{\ol{\theta}})-\delta_S(\Phi_{\ol{\phi}})|\le \frac{1}{8}   ,
  \end{eqnarray}
provided that $|\ol{\theta}'-\ol{\phi}'|\le C N_b^{-2}M_0^{-1/2}$, where $C$ is an absolute constant.
\end{corollary}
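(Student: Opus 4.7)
The strategy is a direct invocation of Proposition \ref{delta-s-diff} with $A = \Phi_{\ol{\theta}}$ and $B = \Phi_{\ol{\phi}}$. Both are $N_s \times N_b$ matrices whose $(j,k)$ entries are (up to the uniform normalization) $\sqrt{(\ol{\theta})'(t_j)/N_f}\,e^{i 2\pi k \ol{\theta}(t_j)}$ and $\sqrt{(\ol{\phi})'(t_j)/N_f}\,e^{i 2\pi k \ol{\phi}(t_j)}$ respectively, with $|k|\le N_b/2$. The task therefore reduces to (i) controlling the entry-wise maximum
\begin{equation*}
\epsilon \;=\; \max_{j,k}\bigl|\Phi_{\ol{\theta}}(j,k)-\Phi_{\ol{\phi}}(j,k)\bigr|
\end{equation*}
in terms of $\|\ol{\theta}'-\ol{\phi}'\|_\infty$, and (ii) choosing the absolute constant $C$ small enough that the proposition delivers $(2\epsilon\sqrt{N_s}+\epsilon^{2}N_s)\,S\le 1/8$.

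For step (i), I would split the entry-wise difference as
\begin{align*}
\Phi_{\ol{\theta}}(j,k)-\Phi_{\ol{\phi}}(j,k)
&= \tfrac{1}{\sqrt{N_f}}\bigl(\sqrt{(\ol{\theta})'(t_j)}-\sqrt{(\ol{\phi})'(t_j)}\bigr)\,e^{i2\pi k\ol{\theta}(t_j)}\\
&\quad + \tfrac{1}{\sqrt{N_f}}\sqrt{(\ol{\phi})'(t_j)}\,\bigl(e^{i2\pi k\ol{\theta}(t_j)}-e^{i2\pi k\ol{\phi}(t_j)}\bigr),
\end{align*}
and treat the pieces separately. The oscillation piece is handled via $|e^{i\alpha}-e^{i\beta}|\le|\alpha-\beta|$, the fact that $\ol{\theta}(0)=\ol{\phi}(0)=0$ so $\|\ol{\theta}-\ol{\phi}\|_\infty\le\|\ol{\theta}'-\ol{\phi}'\|_\infty$, the bound $|k|\le N_b/2$, and the uniform derivative estimate $\|\phi'\|_\infty\le 4M_0+1$ from \myref{bound-dtheta}, giving a contribution of order $N_b\sqrt{M_0/N_f}\,\|\ol{\theta}'-\ol{\phi}'\|_\infty$. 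The square-root piece is handled via $|\sqrt{x}-\sqrt{y}|=|x-y|/(\sqrt{x}+\sqrt{y})$; this introduces a factor $(\min(\ol{\theta})',(\ol{\phi})')^{-1/2}$ but contributes at worst a term of the same order, so that overall
\begin{equation*}
\epsilon \;\le\; \frac{C'\,N_b\sqrt{M_0}}{\sqrt{N_f}}\,\|\ol{\theta}'-\ol{\phi}'\|_\infty.
\end{equation*}

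Substituting this into Proposition \ref{delta-s-diff} with $M=N_s$ and requiring $(2\epsilon\sqrt{N_s}+\epsilon^{2}N_s)\,S\le 1/8$ produces a threshold of the form $\|\ol{\theta}'-\ol{\phi}'\|_\infty\le C\,N_b^{-2}M_0^{-1/2}$, where the constant $C$ absorbs $N_s$, $N_f$, $S$, and the lower bound on the phase derivatives; this is precisely the statement of the corollary. The main technical obstacle is the square-root piece: the factor $1/\sqrt{\min((\ol{\theta})',(\ol{\phi})')}$ is not uniformly bounded on all of $W$, so one must either supplement the hypothesis with a uniform positive lower bound on the admissible phases (implicit in the preceding analysis, where $\ot$ is the true phase of the signal and $\ol{\phi}$ lies in a small neighborhood of it) or bound this term by exploiting the smallness $r$ of the perturbation itself to write the square root as a relative perturbation. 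Once this point is resolved, the remainder of the argument is straightforward bookkeeping of the constants already made explicit in the statement.
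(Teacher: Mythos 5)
Your route is the paper's route: the same entrywise splitting of $\Phi_{\ol{\theta}}(j,k)-\Phi_{\ol{\phi}}(j,k)$ into a square-root piece and an oscillation piece, the same treatment of the oscillation piece via $|e^{i\alpha}-e^{i\beta}|\le|\alpha-\beta|$, $\|\ol{\theta}-\ol{\phi}\|_\infty\le\|\ol{\theta}'-\ol{\phi}'\|_\infty$, $|k|\le N_b$ and the bound $\|\ol{\phi}'\|_\infty\le 4M_0+1$ from \myref{bound-dtheta}, followed by Proposition \ref{delta-s-diff} together with $S\le N_b$. But the one genuinely non-trivial point is exactly the one you leave open: how to bound $|\sqrt{\ol{\theta}'(t_j)}-\sqrt{\ol{\phi}'(t_j)}|$ when $W$ gives no uniform positive lower bound on the derivatives. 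Your first suggested fix (add a lower-bound hypothesis on the admissible phases) would prove a different statement from the corollary, which asserts an absolute constant $C$ for all $\ot,\ol{\phi}\in W$; your second suggestion is not carried out, and a naive ``relative perturbation'' argument fails precisely when $\ol{\phi}'(t_j)$ is comparable to or smaller than the perturbation. So as written the proposal has a gap at its acknowledged main obstacle.

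The missing ingredient is elementary: for $x,y\ge 0$ one has $|\sqrt{x}-\sqrt{y}|\le\sqrt{|x-y|}$, so the square-root piece is bounded by $\sqrt{\e}/\sqrt{N_s}$ with $\e=\|\ol{\theta}'-\ol{\phi}'\|_\infty$, with no lower bound on $\min\ol{\phi}'$ needed. This changes the scaling of that piece from linear to square-root in $\e$, and the stated threshold $\e\le C N_b^{-2}M_0^{-1/2}$ is calibrated exactly for this: after multiplying by $\sqrt{N_s}$ in Proposition \ref{delta-s-diff} (the $\sqrt{N_s}$ cancels) and using $S\le N_b$, the square-root piece contributes about $\sqrt{\e}\,N_b\le\sqrt{C}\,M_0^{-1/4}$ while the oscillation piece contributes about $N_b^{2}\sqrt{M_0}\,\e\le C'$, both made smaller than $1/8$ by choosing $C$ small. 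With this one inequality inserted, the rest of your bookkeeping goes through and the argument coincides with the paper's proof.
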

\begin{proof} We need only to show that the difference between $\Phi_{\ol{\theta}}$ and $\Phi_{\ol{\phi}}$ can be controlled by 
$|\ol{\theta}'-\ol{\phi}'|$. This is quite straightforward using the definition of $\Phi_{\ol{\theta}}$ and $\Phi_{\ol{\phi}}$:
  \begin{eqnarray}
    |\Phi_{\ol{\theta}}(j,k)-\Phi_{\ol{\phi}}(j,k)|&=&\frac{1}{\sqrt{N_s}}\left|\sqrt{\ol{\theta}'(t_j)}e^{i2\pi k \ot(t_j)}-\sqrt{\ol{\phi}'(t_j)}e^{i2\pi k \ol{\phi}(t_j)}\right|\nonumber\\
&\le &\frac{|\sqrt{\ol{\theta}'(t_j)}-\sqrt{\ol{\phi}'(t_j)}|}{\sqrt{N_s}}+\frac{\sqrt{\ol{\theta}'(t_j)}}{\sqrt{N_s}}\left|e^{i2\pi k (\ot(t_j)- \ol{\phi}(t_j)}-1\right|\nonumber\\
&\le &\frac{|\sqrt{\ol{\theta}'(t_j)}-\sqrt{\ol{\phi}'(t_j)}|}{\sqrt{N_s}}+\frac{\sqrt{\ol{\theta}'(t_j)}}{\sqrt{N_s}}2\pi k |\ot(t_j)- \ol{\phi}(t_j)|\nonumber\\
&\le & \frac{\sqrt{\e}}{\sqrt{N_s}}+\frac{2\pi N_b \e \sqrt{4M_0+1}}{\sqrt{N_s}},
  \end{eqnarray}
where we have used the estimate $\|\ol{\theta}'\|_\infty\le 4M_0+1$ given in \myref{bound-dtheta}. 
Using Proposition \ref{delta-s-diff} and the fact that $S\le N_b$, we can complete the proof.
\end{proof}

Combining Lemma \ref{lemma-cover-wb}, Corollary \ref{coro-diff-delta} and \myref{est-s-uniform-0}, we have the following theorem,
\begin{theorem} 
\label{theorem-est-S-uniform}
$\sup_{\theta\in W_{M_0}}\delta_{S}(\Phi_{\theta})\le 1/2$
holds with probability $1-\delta$ provided that
\begin{eqnarray}
N_s\ge C\cdot (4M_0+1)\left(S\log^2 N_b+M_0\log N_b-\log \delta\right)\log ^4 N_b,
\end{eqnarray}
 where $N_s$ is the number of the samples, $N_b$ is the number of elements
in the basis.
\end{theorem}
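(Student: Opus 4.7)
The plan is to assemble the pieces that the excerpt has already stated in the paragraphs just before the theorem: the covering lemma (Lemma \ref{lemma-cover-wb}), the stability of $\delta_S$ under perturbations of $\theta$ (Corollary \ref{coro-diff-delta}), the pointwise probabilistic bound for a fixed $\theta$ (the estimate \myref{est-s-uniform-0} coming from Theorem \ref{theorem-est-S-0}), and the uniform $L^\infty$ bound \myref{bound-dtheta} on $\phi'$ for $\phi\in W_{M_0}$. The logic is a standard $\varepsilon$-net/union-bound argument.

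First I would fix a resolution $r=cN_b^{-2}M_0^{-1/2}$ with $c$ chosen so that Corollary \ref{coro-diff-delta} guarantees $|\delta_S(\Phi_{\ol\theta})-\delta_S(\Phi_{\ol\phi})|\le 1/8$ whenever $\|\ol\theta'-\ol\phi'\|_\infty\le r$. By Lemma \ref{lemma-cover-wb} there exists a finite subset $A_r\subset W_{M_0}$ such that every $\ot\in W_{M_0}$ lies within $\|\cdot\|_\infty$-distance $r$ of the derivative of some element of $A_r$, and
\begin{equation*}
\log|A_r|\le 2M_0\log\!\left(\tfrac{16\pi M_0^2}{r}+1\right)=O(M_0\log N_b).
\end{equation*}
Combining the $\varepsilon$-net property with the corollary gives the deterministic reduction
\begin{equation*}
\sup_{\theta\in W_{M_0}}\delta_S(\Phi_\theta)\le \sup_{\phi\in A_r}\delta_S(\Phi_\phi)+\tfrac18.
\end{equation*}

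Next, for each fixed $\phi\in A_r$, I would apply the pointwise estimate \myref{est-s-uniform-0}: $\delta_S(\Phi_\phi)\le 1/4$ holds with probability at least $1-\delta/|A_r|$ provided
\begin{equation*}
N_s\ge C\cdot \|\phi'\|_\infty\bigl(S\log^2 N_b+\log|A_r|-\log\delta\bigr)\log^4 N_b.
\end{equation*}
Using \myref{bound-dtheta}, $\|\phi'\|_\infty\le 4M_0+1$ uniformly in $\phi\in W_{M_0}$, so this condition is uniform in $\phi\in A_r$ and reduces, after substituting the bound on $\log|A_r|$, to the hypothesis of the theorem. A union bound over $A_r$ then yields
\begin{equation*}
\mathbb{P}\!\left(\sup_{\phi\in A_r}\delta_S(\Phi_\phi)>1/4\right)\le |A_r|\cdot\frac{\delta}{|A_r|}=\delta,
\end{equation*}
and combining with the $1/8$ gap from the covering estimate gives $\sup_{\theta\in W_{M_0}}\delta_S(\Phi_\theta)\le 3/8<1/2$ with probability at least $1-\delta$.

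The only genuinely delicate step is calibrating the resolution $r$: one has to choose $r$ small enough that the perturbation bound from Corollary \ref{coro-diff-delta} is comfortably below $1/2-\sup_{\phi\in A_r}\delta_S(\Phi_\phi)$, yet large enough that $\log|A_r|=O(M_0\log N_b)$ is absorbed into the $S\log^2 N_b$ term in the sample-complexity bound. Because the corollary's tolerance depends polynomially on $N_b$ and $M_0$, the resulting $\log|A_r|$ only contributes an $M_0\log N_b$ term, which matches the statement exactly. Everything else is bookkeeping once the three lemmas are in place.
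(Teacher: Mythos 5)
Your proposal is correct and follows essentially the same route as the paper: an $\varepsilon$-net of $W_{M_0}$ from Lemma \ref{lemma-cover-wb}, the perturbation bound on $\delta_S$ from Corollary \ref{coro-diff-delta}, the fixed-$\phi$ probabilistic estimate \myref{est-s-uniform-0} with the uniform bound $\|\phi'\|_\infty\le 4M_0+1$ from \myref{bound-dtheta}, and a union bound, with the choice of $r$ calibrated so that $\log|A_r|=O(M_0\log N_b)$. This is exactly the argument the paper gives.
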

\begin{remark}
Comparing with the condition stated in Theorem \ref{theorem-est-S}, we 
require extra $M_0\log^5 N_b$ samples in order to get the uniform estimate.
But this number $M_0\log^5 N_b$ can be absorbed by $S\log^6 N_b$, since $S$ is larger than $M_0$. Thus the condition to get an uniform estimate is essentially the same as that in Theorem \ref{theorem-est-S}.
\end{remark}

\section{Numerical results}
In this section, we will perform several numerical experiments to 
confirm our theoretical results presented in the previous section 
and to demonstrate the performance of the algorithm based on 
the weighted $l^1$ optimization.

\vspace{3mm}
\noindent
\textbf{Example 1: Exact recovery for a well-resolved signal} 

\noindent
The first example is a well-resolved periodic signal. In this example,
the mean and the envelope have a sparse Fourier representation in 
the $\theta$-space and the instantaneous frequency has a sparse Fourier 
spectrum in the physical space. The signal we use is generated by 
the following formula: 
\begin{eqnarray}
  \label{exact-recover}
&&\theta=20\pi t +2\cos2\pi t +2\sin4\pi t,\quad \ot=\theta/10\nonumber\\
&&a_0=2+\cos\ot+2\sin2\ot+\cos3\ot,\quad a_1=3+\cos\ot+\sin3\ot\nonumber\\
&&f=a_0+a_1\cos\theta.
\end{eqnarray}
This signal is sampled over a uniform mesh of 256 points such that 
there are about 12 samples in each period of the signal on average.

\begin{figure}

    \begin{center}

\includegraphics[width=0.45\textwidth]{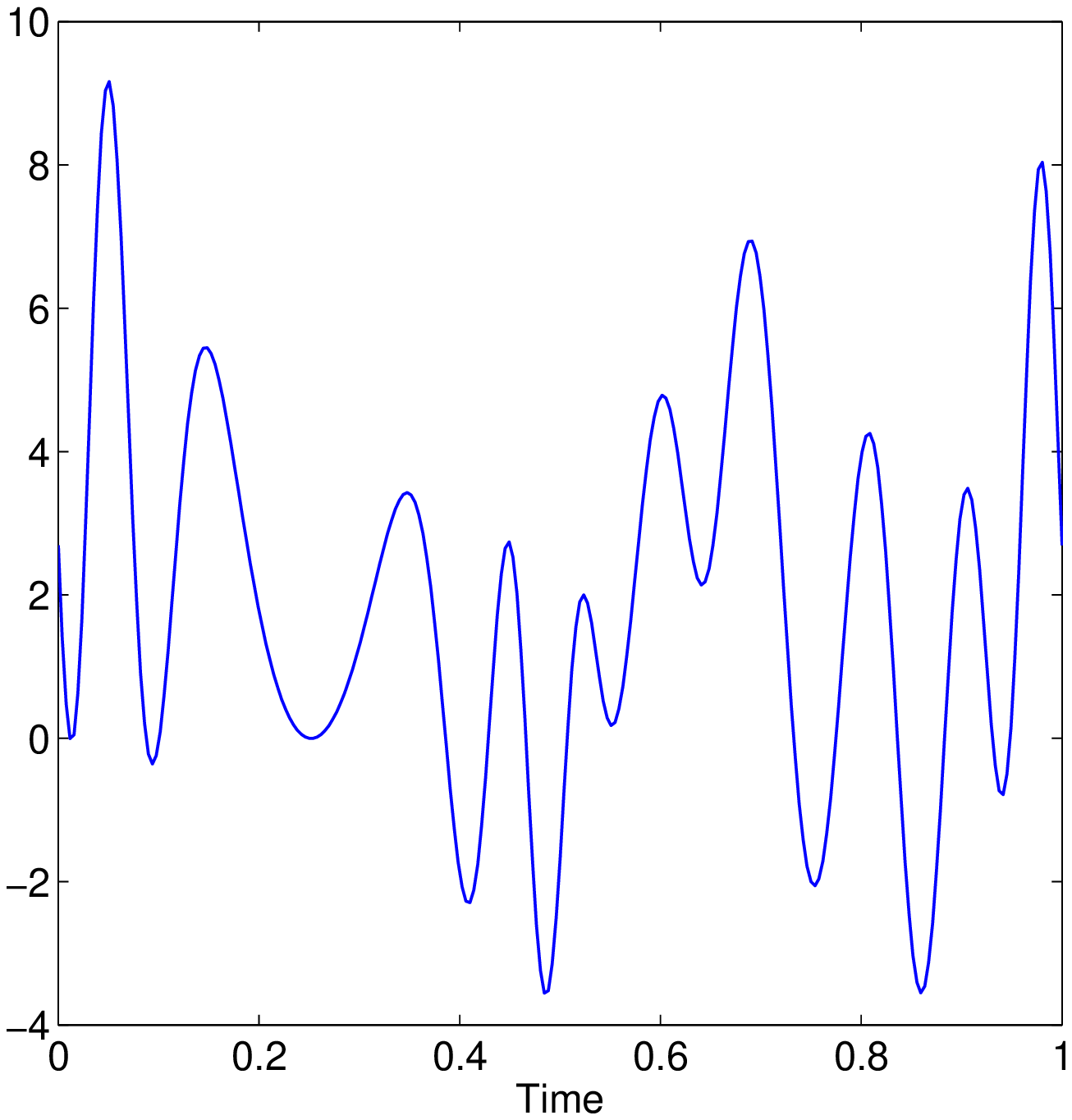}
\includegraphics[width=0.45\textwidth]{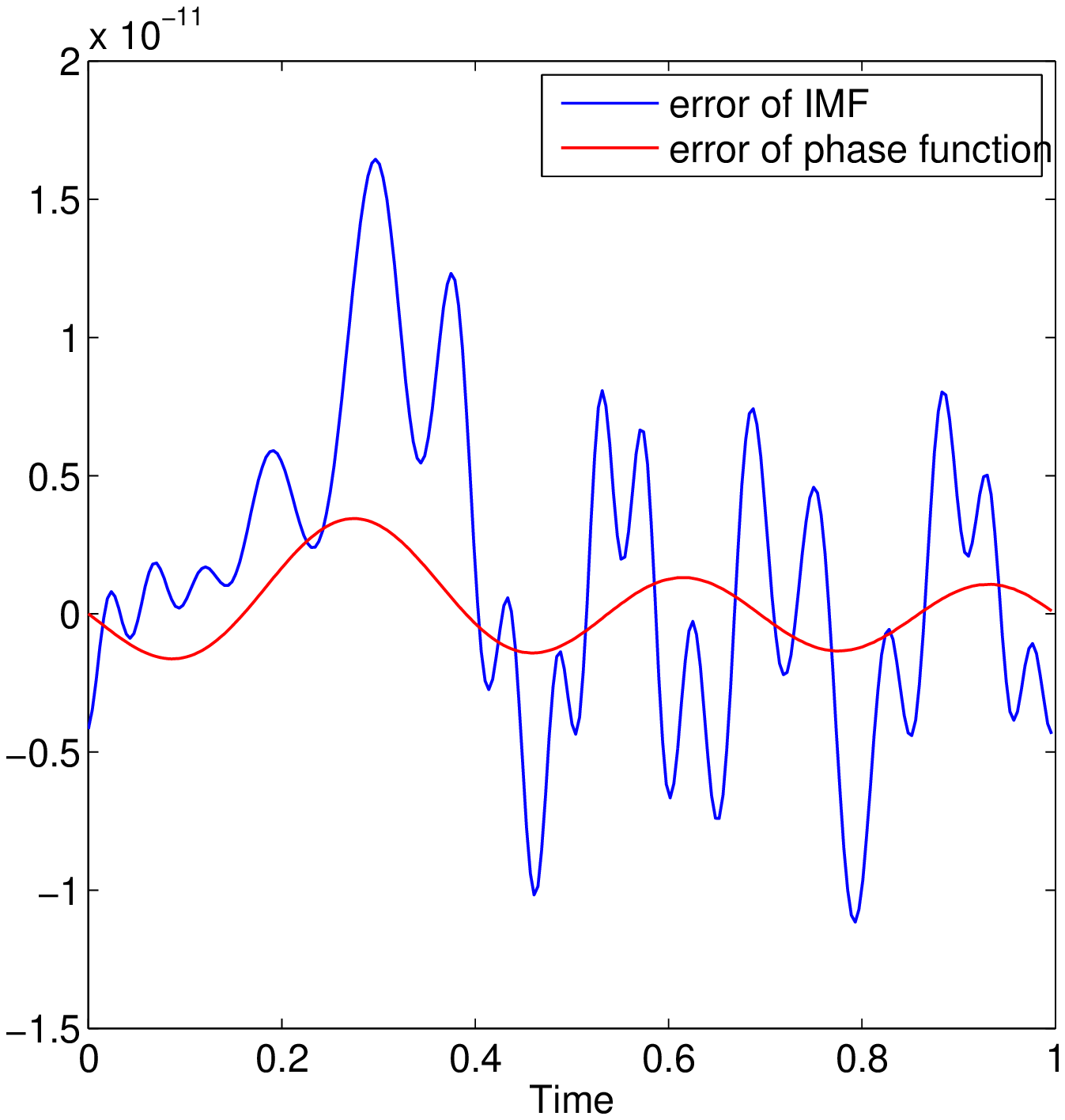}
     \end{center}
    \caption{  \label{error-direct}Left: Original signal; Right: Error of the IMF and the phase function.}
\end{figure}
The numerical results are shown in Fig. \ref{error-direct} and Fig. \ref{error-fft}.
In Fig. \ref{error-direct}, we can see that our algorithm indeed 
recovers the exact decomposition of this signal.
This is also consistent with the theoretical result we obtained in Theorem \ref{theorem-sparse}. The result shown in Fig. \ref{error-direct}
is obtained by applying the non-uniform Fourier transform directly. 
As we proposed in our algorithm, for a well-resolved signal, it is
more efficient to use a combination of interpolation and FFT. This 
procedure would introduce some interpolation error, 
however the computation is accelerated tremendously. 
\begin{figure}
    \begin{center}
\includegraphics[width=0.45\textwidth]{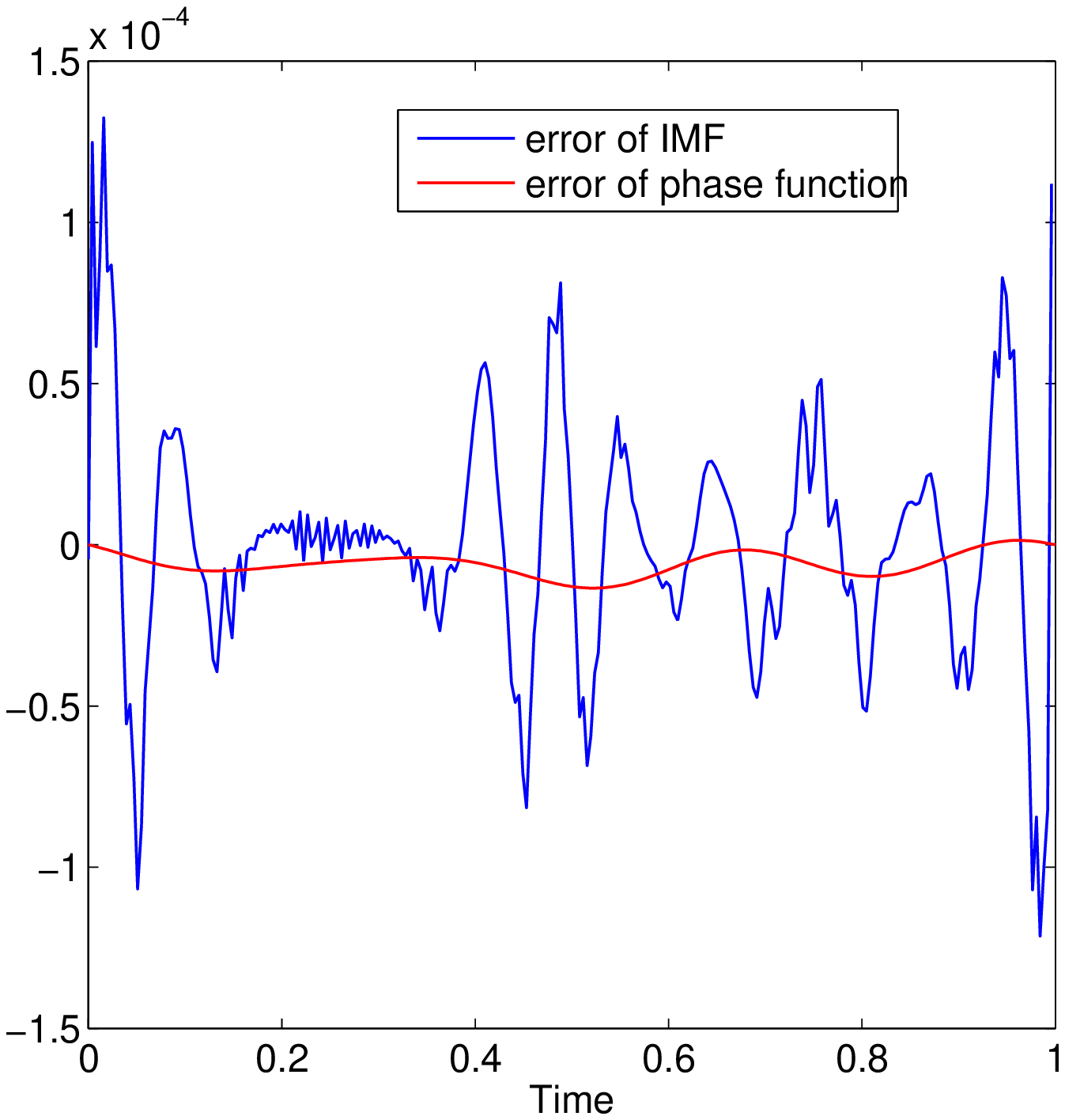}
\includegraphics[width=0.45\textwidth]{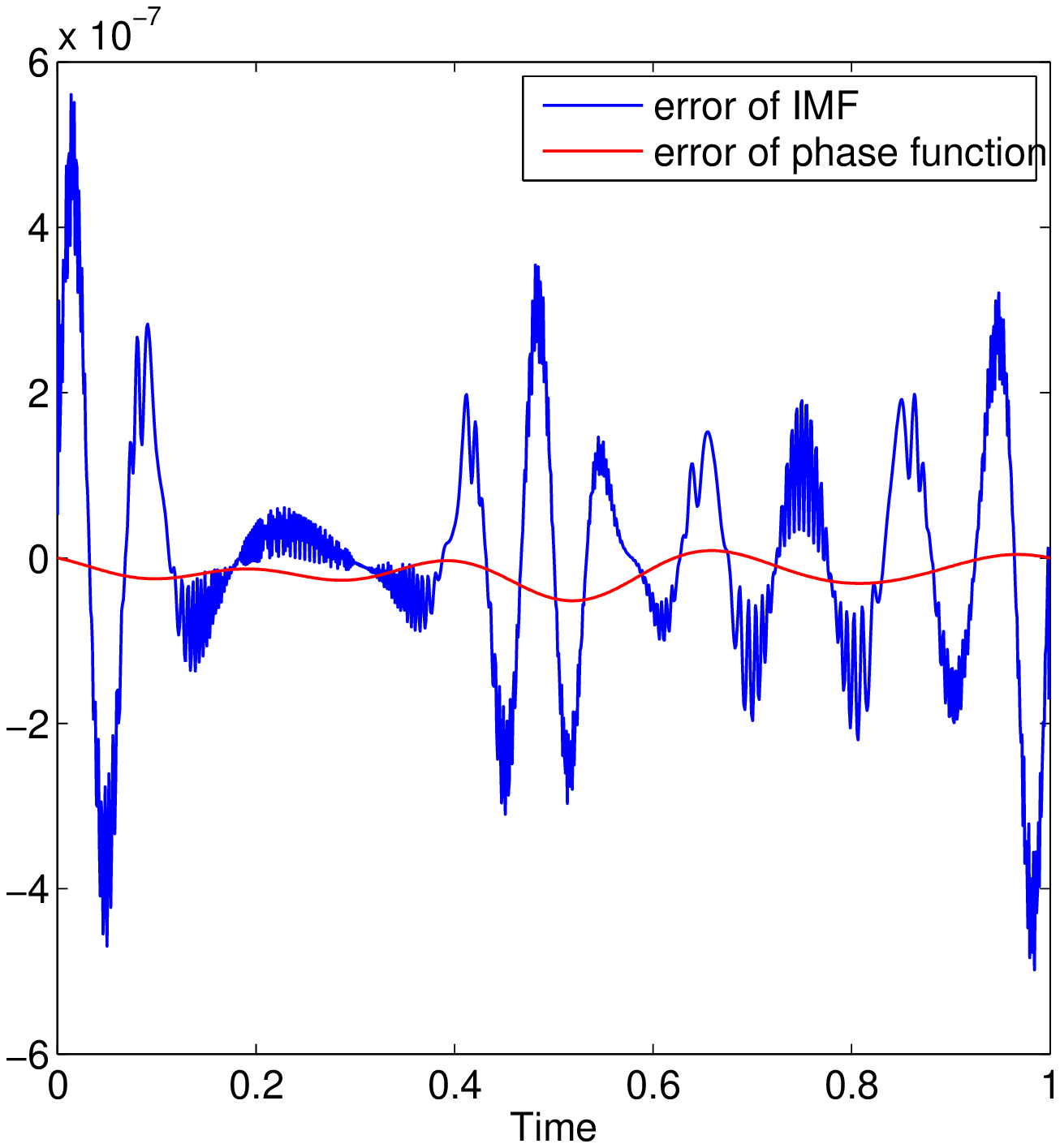}
     \end{center}
    \caption{  \label{error-fft}Left: Error of the IMF and the phase function with 256 uniform samples; Right: 
Error of the IMF and the phase function with 1024 uniform samples.}
\end{figure}
As we see in Fig. \ref{error-fft}, if we use the FFT-based algorithm, the error increase to the order of $10^{-4}$ instead of $10^{-11}$ in the previous result when we used the non-uniform Fourier transform. If we increase 
the number of sample points to 1024, the order of error decreases to 
$10^{-7}$. This indicates that the main source of error comes from the 
interpolation error.

In our previous paper \cite{HS12}, we have shown many numerical results to
demonstrate the stability of our algorithm. These numerical examples 
confirm the theoretical results presented in Theorem \ref{theorem-decay} and Theorem \ref{theorem-decay-theta}. We will not reproduce these numerical
examples in this paper.

\vspace{3mm}
\noindent
\textbf{Example 2: Exact recovery for a signal with random samples}

\noindent
The second example is designed to confirm the result of Theorem \ref{theorem-sparse-sample-exact}. This example shows that 
for a signal with a sparse structure, our algorithm is capable of producing the exact decomposition even if it is poorly sampled. The signal is given
below in \myref{exact-recover-sparse}.  
\begin{eqnarray}
  \label{exact-recover-sparse}
&&\theta=200\pi t -10\cos2\pi t -2\sin4\pi t,\quad \ot=\theta/(100)\nonumber\\
&&a_0=\cos\ot,\quad a_1=3+\cos\ot+\sin2\ot\nonumber\\
&&f=a_0+a_1\cos\theta.
\end{eqnarray}
The number of sample points is set to be 120. These sample points are 
selected at random over 4096 uniformly distributed points. 
On average, there are only 1.2 points in each period of the signal. 
We test 100 independent samples and our algorithm is able to recover
the signal for 97 samples, which gives $97\%$ success rate.  
Fig. \ref{error-sparse-exact} gives one of the successful samples.
\begin{figure}

    \begin{center}

\includegraphics[width=0.43\textwidth]{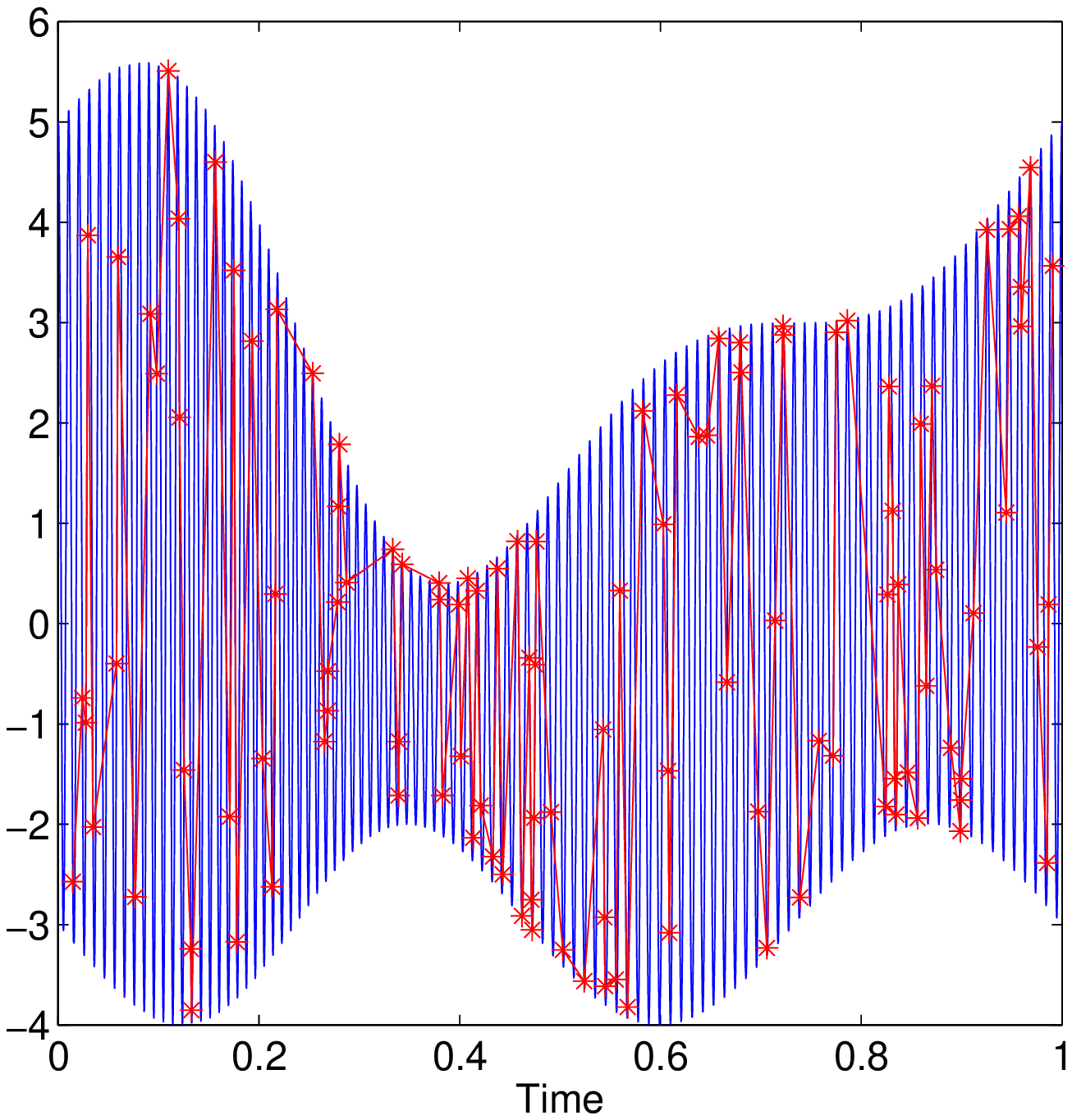}
\includegraphics[width=0.47\textwidth]{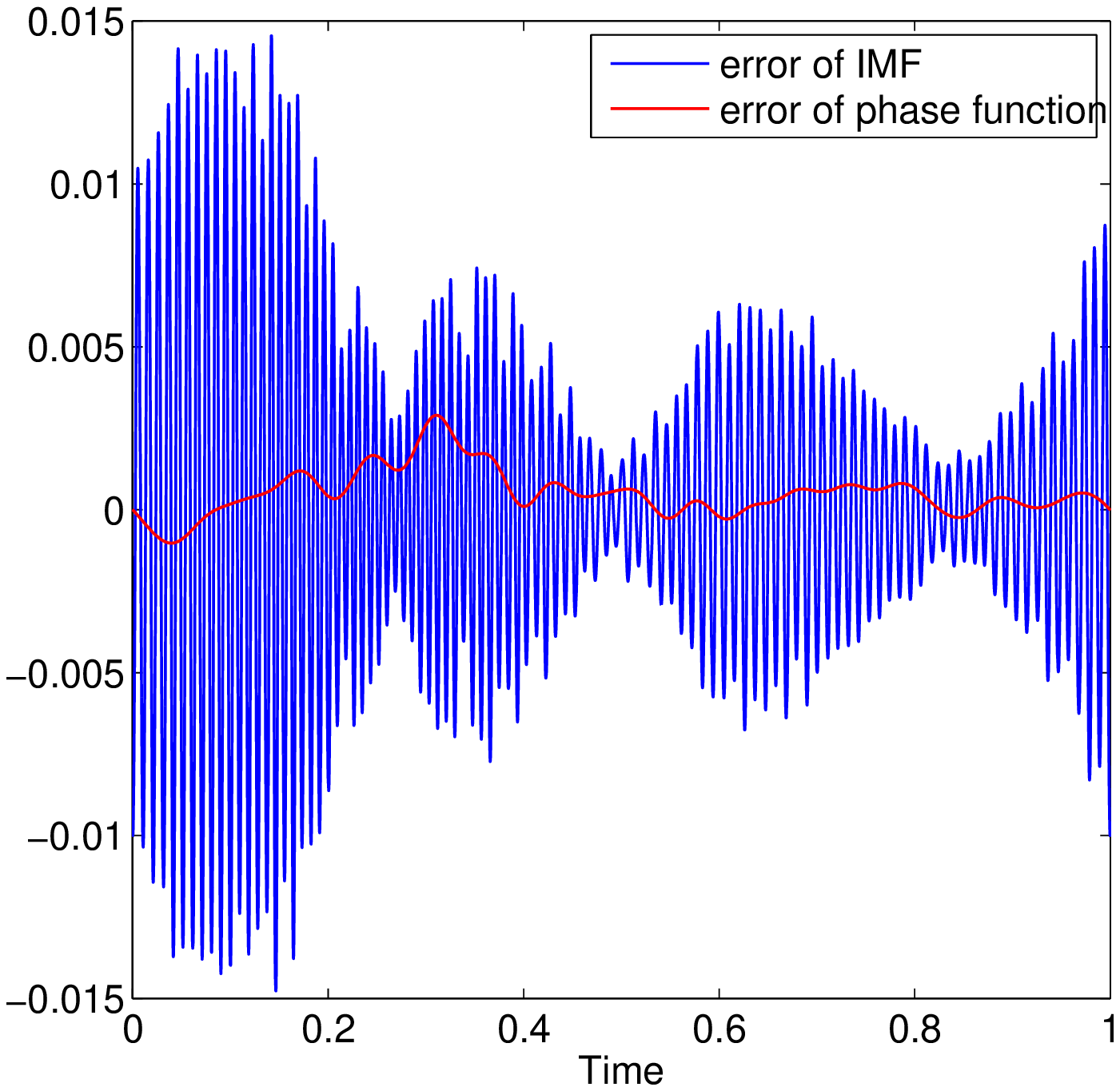}
     \end{center}
    \caption{  \label{error-sparse-exact}Left: Original signal and the sample points; Right: Error of the IMF and phase function.}
\end{figure}

The right panel of Fig. \ref{error-sparse-exact} shows that the order of error is $10^{-2}$ for IMF and $10^{-3}$ for the phase function.
In the computation, the $l^1$ optimization problem is solved approximately in each step of the iteration. This is the reason that the 
error is much larger than the round-off error of the computer. If we increase the accuracy in solving the $l^1$ optimization problem, the algorithm
would give a more accurate result. However the computational cost also 
increases as a consequence. We also reduce the number of sample points to 
80 and carry out the same test for 100 times. In this case, the recovery 
rate was 46 out of 100.

\vspace{3mm}
\noindent
\textbf{Example 3: Approximate recovery for a signal with random samples}

\noindent
In this example, we will check the stability of our algorithm for a sparsely sampled signal. 
The signal is generated by \myref{sparse-stable},
\begin{eqnarray}
  \label{sparse-stable}
&&\theta=\theta_0 + 0.1\sin(120\pi t),\nonumber\\
&&a_0=\cos(2\pi t),\quad a_1=3+\cos(2\pi t)+\sin(4\pi t)\nonumber\\
&&f=a_0+a_1\cos\theta+0.1X(t).
\end{eqnarray}
where $\theta_0$ is the $\theta$ given in \myref{exact-recover-sparse}, and $X(t)$ is the Gaussian noise with standard deviation $\sigma^2=1$.
Based on the signal in the previous example, we add one small high frequency component on the phase function such that this high frequency part 
cannot be captured during the iteration. Moreover, 
$a_0$ and $a_1$ are not exactly sparse over the Fourier basis in
the $\theta$-space. We also add a white noise to the original signal to 
make it even more challenging to decompose. 

In this example, when the number of sample points is 120, our method can give 92 successful recoveries in 100 independent tests. 
Fig. \ref{error-sparse-appr} gives one of the successful recoveries 
obtained by our algorithm. Due to the truncation error and the noise, 
the error becomes much larger than that in the previous example. But all 
the errors are comparable with the magnitude of the truncation error and 
noise, which shows that our method has good stability even for signals 
with rare samples. When the number of samples is reduced to 80, the 
recovery rate drops to 40 out of 100.
\begin{figure}

    \begin{center}

\includegraphics[width=0.45\textwidth]{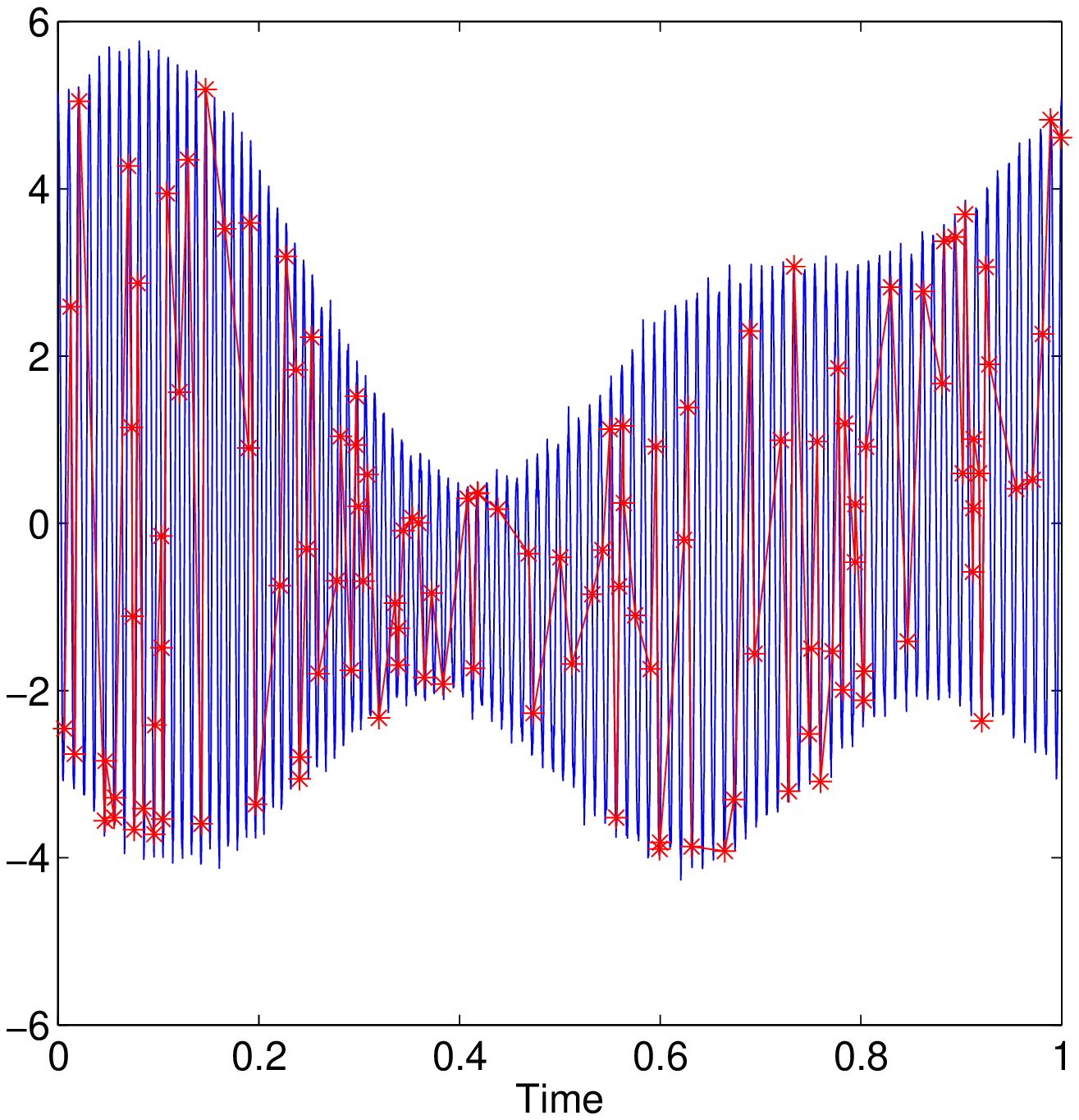}
\includegraphics[width=0.45\textwidth]{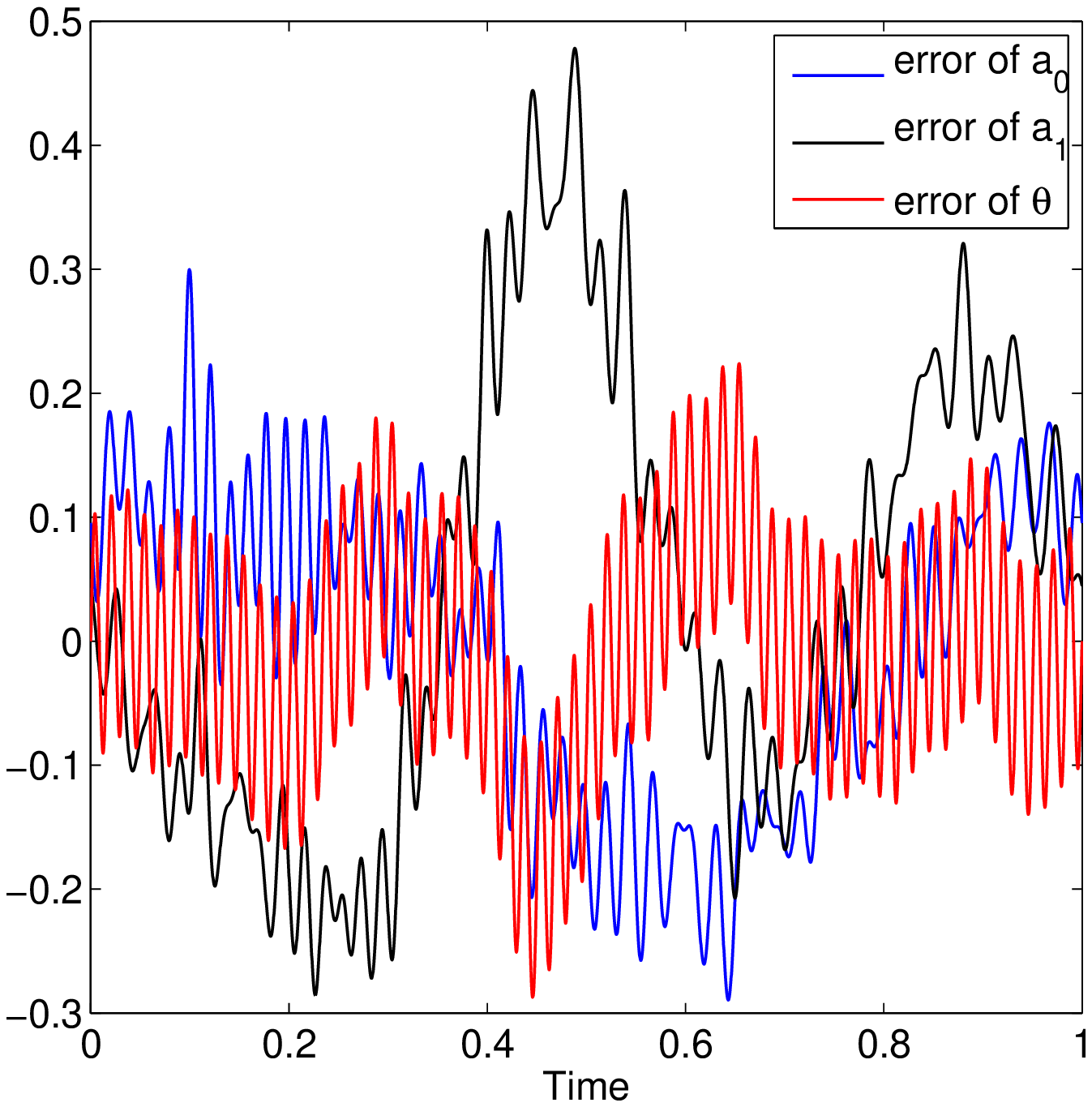}
     \end{center}
    \caption{  \label{error-sparse-appr} Left:Original signal (blue) and the sample points (red) in Ex 3; Right: Errors of $a_0$, $a_1$ and $\theta$.}
\end{figure}

\section{Concluding remarks}
\label{section-conclusion}

In this paper, we analysed the convergence of the data-driven 
time-frequency analysis method proposed in \cite{HS12}.
First, we considered the case when the number of sample points is 
large enough. We proved that the algorithm we developed would converge 
to the exact decomposition if the signal has an intrinsic sparsity 
structure in the coordinate determined by the phase function. We 
also proved the convergence of our method with an approximate 
decomposition when the signal does not have an exact sparse structure 
but its spectral coefficients have a fast decay.

We also considered the more challenging case when only a few number of 
samples are given which do not resolve the original signal accurately. 
In this case, we need to solve a $l^1$ minimization problem which is
computationally more expensive.
We proved the stability and convergence of our method by using
some results developed in compressive sensing.
As in compressive sensing, the convergence and stability of our method
assumes that certain $S$-restricted isometry condition is satisfied.
We proved that for each fixed step in the iteration, this 
$S$-restricted isometry condition is satisfied with an overwhelming 
probability if the sample points are selected at random.

We presented numerical evidence to support our theoretical results. Our numerical results confirmed the theoretical results in all cases that we considered. 

We are currently working on the convergence of the data-driven
time-frequency analysis method  for non-periodic signals. Our extensive
numerical results seem to indicate that our method also converges for 
non-periodic signals. The theoretical analysis for this problem is 
more challenging. We will report the result in a subsequent paper.

\vspace{0.2in}
\noindent
{\bf Acknowledgments.}
This work was in part supported by the AFOSR MURI grant
FA9550-09-1-0613, a DOE grant DE-FG02-06ER25727, and a NSF grant DMS-0908546.  
The research of Dr. Z. Shi was in part supported by a NSFC Grant 11201257.

\vspace{3mm}
\noindent
\textbf{Appendix A: Error of the envelope functions}

Suppose
\begin{eqnarray}
f(t)=f_{0}(t)+f_{1}(t)\cos\theta
\end{eqnarray}
is the signal we want to decompose.

 Let $a^m=f_{1}\cos\Delta\theta^m,\quad b^m=f_{1}\sin\Delta\theta^m$,
then, we have
\begin{eqnarray}
f=f_{0}+a^m\cos\theta^m-b^m\sin\theta^m.
\end{eqnarray}
 Let $L^m=\frac{\theta^m(T)-\theta^m(0)}{2\pi}$ and $\overline{\theta}^m=\theta^m/(2\pi L^m)$.
 Then, we have 
\begin{eqnarray}
\label{exp-f-theta}
f=f_{0}+a^m\cos2\pi L^m\ot^m-b^m\sin2\pi L^m\ot^m.
\end{eqnarray}
 Define the Fourier transform in $\ot$-space as: 
\begin{eqnarray}
\widehat{f}_{\theta^m}=\int_{0}^{1}f(t)e^{-i2\pi k\ot^m}d\ot^m .
\end{eqnarray}
Applying Fourier transform to both sides of \myref{exp-f-theta}, we have
\begin{eqnarray}
\widehat{f}_{\theta^m}(k)=\widehat{f}_{0,\theta^m}(k)+\frac{1}{2}\left(\widehat{a}^m_{\theta^m}(k+L^m)+\widehat{a}^m_{\theta^m}(k-L)\right)
-\frac{i}{2}\left(\widehat{b}^m_{\theta^m}(k+L^m)-\widehat{b}^m_{\theta^m}(k-L^m)\right)\label{fft-f}.\quad\quad
\end{eqnarray}
Then, we get 
\begin{eqnarray*}
  \widehat{a}^m_{\theta^m}(k)-i\widehat{b}^m_{\theta^m}(k)&=&2\widehat{f}_{\theta^m}(k-L^m)-2\widehat{f}_{0,\theta^m}(k-L^m)-\widehat{a}^m_{\theta^m}(k-2L^m)-i\widehat{b}^m_{\theta^m}(k-2L^m),\\
\widehat{a}^m_{\theta^m}(k)+i\widehat{b}^m_{\theta^m}(k)&=&2\widehat{f}_{\theta^m}(k+L^m)-2\widehat{f}_{0,\theta^m}(k+L^m)-\widehat{a}^m_{\theta^m}(k+2L^m)+i\widehat{b}^m_{\theta^m}(k+2L^m).
\end{eqnarray*}
It is easy to solve for 
$\widehat{a}^m_{\theta^m}$ and $\widehat{b}^m_{\theta^m}$ to obtain:
\begin{eqnarray}
\widehat{a}^m_{\theta^m}(k) & = & \widehat{f}_{\theta^m}(k+L^m)+\widehat{f}_{\theta^m}(k-L^m)
-\left[\widehat{f}_{0,\theta^m}(k+L^m)+\widehat{f}_{0,\theta^m}(k-L^m) \right.\nonumber \\ &  & \hspace{0mm}
\left.+\frac{1}{2}\left(\widehat{a}^m_{\theta^m}(k+2L^m)+\widehat{a}^m_{\theta^m}(k-2L^m)\right)
-\frac{i}{2}\left(\widehat{b}^m_{\theta^m}(k+2L^m)-\widehat{b}^m_{\theta^m}(k-2L^m)\right)\right],\label{exp-a-exact}\quad\quad \\
\widehat{b}^m_{\theta^m}(k) & = & -i\left(\widehat{f}_{\theta^m}(k+L^m)-\widehat{f}_{\theta^m}(k-L^m)\right)+i\left[\widehat{f}_{0,\theta^m}(k+L^m)
-\widehat{f}_{0,\theta^m}(k-L^m)
\right.\nonumber \\
 &  & \hspace{0mm}\left.+\frac{1}{2}\left(\widehat{a}^m_{\theta^m}(k+2L^m)-\widehat{a}^m_{\theta^m}(k-2L^m)\right)
-\frac{i}{2}\left(\widehat{b}^m_{\theta^m}(k+2L^m)+\widehat{b}^m_{\theta^m}(k-2L^m)\right)\right].\label{exp-b-exact}
\end{eqnarray}
 In our algorithm, $\mathcal{F}_{\theta^m}(\widetilde{a}^m)$ and $\mathcal{F}_{\theta^m}(\widetilde{b}^m)$ are approximated
in the following way: 
\begin{eqnarray}
\widehat{\widetilde{a}}^m_{\theta^m}(k) & = & \left\{ \begin{array}{cc}
\widehat{f}_{\theta^m}(k+L^m)+\widehat{f}_{\theta^m}(k-L^m), & -L^m/2\le k\le L^m/2,\\
0, & otherwise.
\end{array}\right.\\
\widehat{\widetilde{b}}^m_{\theta^m}(k) & = & \left\{ \begin{array}{cc}
-i(\widehat{f}_{\theta^m}(k+L^m)-\widehat{f}_{\theta^m}(k-L^m)), & -L^m/2\le k\le L^m/2,\\
0, & otherwise.
\end{array}\right.
\end{eqnarray}
 Then, we can get the error of the approximation in the spectral space:
\begin{eqnarray*}
\widehat{\Delta a}^m_{\theta^m}(k)=\left\{ \begin{array}{cl}
-\left[\widehat{f}_{0,\theta^m}(k+L^m)+\widehat{f}_{0,\theta^m}(k-L^m)+\frac{1}{2}\left(\widehat{a}^m_{\theta^m}(k+2L^m)+\widehat{a}^m_{\theta^m}(k-2L^m)\right)\right.\\
\hspace{1cm}\left.-\frac{i}{2}\left(\widehat{b}^m_{\theta^m}(k+2L^m)-\widehat{b}^m_{\theta^m}(k-2L^m)\right)\right], & |k|\le L^m/2,\\
\widehat{a}^m_{\theta^m}(k), & |k|>L^m/2.
\end{array}\right.
\end{eqnarray*}
\begin{eqnarray*}
\widehat{\Delta b}^m_{\theta^m}(k)=\left\{ \begin{array}{cl}
i\left[\widehat{f}_{0,\theta^m}(k+L^m)-\widehat{f}_{0,\theta^m}(k-L^m)+\frac{1}{2}\left(\widehat{a}^m_{\theta^m}(k+2L^m)-\widehat{a}^m_{\theta^m}(k-2L^m)\right)\right.\\
\left.
-\frac{i}{2}\left(\widehat{b}^m_{\theta^m}(k+2L^m)+\widehat{b}^m_{\theta^m}(k-2L^m)\right)\right], & |k|\le L^m/2,\\
\widehat{b}^m_{\theta^m}(k), & |k|>L^m/2.
\end{array}\right.
\end{eqnarray*}
Thus, we have the following inequality for the $l^1$ norm of the error in the spectral space:
\begin{eqnarray}
\label{exp-error-a}
 |\Delta a^m|&\le & \|\widehat{\Delta a}^m_{\theta^m}\|_{1}\nonumber \\
 & \le &2\sum_{\frac{L^m}{2}<k<\frac{3}{2}L^m}\left|\widehat{f}_{0,\theta^m}(k)\right|+\sum_{\frac{3}{2}L^m<k<\frac{5}{2}L^m}\left(\left|
\widehat{a}^m_{\theta^m}(k)\right|+\left|\widehat{b}^m_{\theta^m}(k)\right|\right)+\sum_{|k|>\frac{L^m}{2}}\left|\widehat{a}^m_{\theta^m}(k)\right|.\quad\quad\quad
\end{eqnarray}
Similarly,  we get
\begin{eqnarray}
\label{exp-error-b}
  |\Delta b^m|\le 2\sum_{\frac{L^m}{2}<k<\frac{3}{2}L^m}\left|\widehat{f}_{0,\theta^m}(k)\right|+\sum_{\frac{3}{2}L^m<k<\frac{5}{2}L^m}\left(\left|
\widehat{a}^m_{\theta^m}(k)\right|+\left|\widehat{b}^m_{\theta^m}(k)\right|\right)+\sum_{|k|>\frac{L^m}{2}}\left|\widehat{b}^m_{\theta^m}(k)\right|.\quad\quad\quad
\end{eqnarray}

In the above derivation, we assume that the Fourier transform of $f$ in $\theta^m$-space can be calculated exactly. If only approximate
Fourier transform is available, denoted as $\widehat{\widetilde{f}}_{\theta^m}$, such as the signal with sparse samples we discussed in Section \ref{section-sparse}, there would 
be an extra term in the estimates of $\Delta a^m$ and $\Delta b^m$,
\begin{eqnarray}
\label{exp-error-a-ss}
 |\Delta a^m|
  & \le &2\sum_{\frac{L^m}{2}<k<\frac{3}{2}L^m}\left|\widehat{f}_{0,\theta^m}(k)\right|+\sum_{\frac{3}{2}L^m<k<\frac{5}{2}L^m}\left(\left|
\widehat{a}^m_{\theta^m}(k)\right|+\left|\widehat{b}^m_{\theta^m}(k)\right|\right)\nonumber\\
&&+\sum_{|k|>\frac{L^m}{2}}\left|\widehat{a}^m_{\theta^m}(k)\right|+2\sum_{\frac{L^m}{2}<k<\frac{3}{2}L^m}\left|\widehat{f}_{\theta^m}(k)-
\widehat{\widetilde{f}}_{\theta^m}(k)\right|,\\
\label{exp-error-b-ss}
 |\Delta b^m|
  & \le &2\sum_{\frac{L^m}{2}<k<\frac{3}{2}L^m}\left|\widehat{f}_{0,\theta^m}(k)\right|+\sum_{\frac{3}{2}L^m<k<\frac{5}{2}L^m}\left(\left|
\widehat{a}^m_{\theta^m}(k)\right|+\left|\widehat{b}^m_{\theta^m}(k)\right|\right)\nonumber\\
&&+\sum_{|k|>\frac{L^m}{2}}\left|\widehat{b}^m_{\theta^m}(k)\right|+2\sum_{\frac{L^m}{2}<k<\frac{3}{2}L^m}\left|\widehat{f}_{\theta^m}(k)-
\widehat{\widetilde{f}}_{\theta^m}(k)\right| .
\end{eqnarray}

\vspace{3mm}
\noindent
\textbf{Appendix B: Estimates of $\widehat{f}_{0,\theta^m}(\omega)$, $\widehat{a}^m_{\theta^m}(\omega)$ 
and $\widehat{b}^m_{\theta^m}(\omega)$in Theorem \ref{theorem-sparse}.}

We first estimate $f_0$. We have
\begin{eqnarray}
|\widehat{f}_{0,\theta^m}(\omega)| & = & \left|\int_{0}^{1} f_{0}(t)e^{-i2\pi\omega\ot^m}d\ot^m\right|\nonumber \\
& = & \int_0^1\sum_{|k|\le M_{1}}\widehat{f}_{0,\theta}(k)e^{i2\pi (k\ot-\omega\ot^m)}d\ot^m\nonumber \\
 & = & \sum_{|k|\le M_{1}}\widehat{f}_{0,\theta}(k)\int_{0}^{1}e^{i2\pi (k\ot-\omega\ot^m)}d\ot^m\nonumber\\
 & = & \left|\sum_{|k|\le M_{1}}\widehat{f}_{0,\theta}(k)\int_{0}^{1}e^{i2\pi(\al k-\omega)\ot^m}
e^{ik\dt^m/L}d\ot^m\right|,
\end{eqnarray}
where $\al=L^m/L$. In the last equality, we have used the fact 
that $\theta=2\pi L \ot,\; \theta^m=2\pi L^m \ot^m$ and $\theta=\theta^m+\Delta \theta^m$.

Using Lemma \ref{lemma-fft}, we obtain for any $|\omega|>L/2$ that
\begin{eqnarray}
\label{est-a0-0}
|\widehat{f}_{0,\theta^m}(\omega)| & \le & \left|\sum_{|k|\le M_{1}}\widehat{f}_{0,\theta}(k)\int_{0}^{1}e^{i2\pi(\al k-\omega)\ot^m}
e^{ik\dt^m/L}d\ot^m\right|\nonumber \\
 & \le & C_{0}\sum_{|k|\le M_{1}}\frac{QM_{0}^{n}}{|\omega-\al k|^{n}}\sum_{j=1}^{n}\left|\frac{k}{L}\right|^{j}(2\pi M_{0})^{-j}\|\mathcal{F}_{\theta^m}[(\Delta \theta^m)']\|_{1}^{j}\nonumber \\
 & \le & 2C_{0}Q\left(\frac{|\omega|}{2}\right)^{-n}M_{0}^{n}M_{1}\sum_{j=1}^{n}\left(M_{1}\gamma/L\right)^{j},
\end{eqnarray}
 where 
\begin{eqnarray}
Q=\frac{P\left(z,n\right)}{\left(\min(\ot^m)'\right)^{n}},\quad z=\frac{\|\mathcal{F}[(\ot^m)']\|_{1}}{\min(\ot^m)'},\quad  \gamma=\frac{\|\mathcal{F}[(\Delta \theta^m)']\|_{1}}{2\pi M_{0}}.
\end{eqnarray}
In the above derivation, we need to assume that $L\ge 4M_1$ such that $|\omega-\al k|\ge |\omega|/2$ for all $|\omega|\ge L/2$ and $|k|\le M_1$.

If we further assume that $\gamma \le 1/4$, we have
\begin{eqnarray}
|\widehat{f}_{0,\theta^m}(\omega)| 
&\le &  C_{0}Q\left(\frac{|\omega|}{2}\right)^{-n}M_{0}^{n}M_{1}\gamma .
\end{eqnarray}

Next, we estimate $\widehat{a}^m_{\theta^m}$. The method of analysis
is similar to the previous one, however the derivation is a little more complicated. We proceed as follows:
\begin{eqnarray}
\label{est-a-0}
|\widehat{a}^m_{\theta^m}(\omega)| & = & \left|\int_{0}^{1}f_{1}(t)\cos\dt^m(t) e^{-i2\pi\omega\ot^m}d\ot^m\right|\nonumber\\
 & \le & \frac{1}{2}\left|\int_{0}^{1}\sum_{|k|\le M_1}f_{1,\theta}(k)e^{i2\pi k\ot} (e^{i\dt}+e^{-i\dt})e^{-i2\pi\omega\ot^m}d\ot^m\right|\nonumber\\
 & \le & \frac{1}{2}\left|\sum_{|k|\le M_1}f_{1,\theta}(k)\int_{0}^{1}e^{i2\pi(\al k-\omega)\ot^m}e^{i(k+L)\dt/L}d\ot^m\right|\nonumber\\
 &  & +\frac{1}{2}\left|\sum_{|k|\le M_1}f_{1,\theta}(k)\int_{0}^{1}e^{i2\pi(\al k-\omega)\ot^m}e^{i(k-L)\dt/L}d\ot^m\right|.
\end{eqnarray}
For the first term in the above inequality, we have that for any $|\omega|>L/2$,
\begin{eqnarray}
\label{est-a-1}
 &  & \left|\sum_{|k|\le M_1}f_{1,\theta}(k)\int_{0}^{1}e^{i2\pi(\al k-\omega)\ot^m}e^{i(k+L)\dt/L}d\ot^m\right|\nonumber \\
 & \le & C_{0}Q\sum_{|k|\le M_{1}}\frac{M_{0}^{n}}{|\omega-\al k|^{n}}\sum_{j=1}^{n}\left|1+\frac{k}{L}\right|^{j}\gamma^{j}\nonumber \\
 & \le & C_{0}Q\left(\frac{|\omega|}{2}\right)^{-n}M_{0}^{n}\sum_{j=1}^{n}2^{j-1}\gamma^{j}\sum_{|k|\le M_{1}}\left(1+\left|\frac{k}{L}\right|^{j}\right)\nonumber \\
 & \le & 4C_{0}Q\left(\frac{|\omega|}{2}\right)^{-n}M_{0}^{n}(2M_{1}+1)\gamma.
\end{eqnarray}
Here we also assume that $L\ge 4M_1,\; \gamma\le 1/4$. The definition of $Q$ and $\gamma$ can be found in \myref{def-Q}.

For the second term in \myref{est-a-0}, we can get the same bound for $|\omega|\ge L/2$,
\begin{eqnarray}
\label{est-a-2}
\left|\sum_{|k|\le M_1}f_{1,\theta}(k)\int_{0}^{1}e^{i2\pi(\al k-\omega)\ot^m}e^{i(k-L)\dt/L}d\ot^m\right| \le 4C_{0}Q\left(\frac{|\omega|}{2}\right)^{-n}M_{0}^{n}(2M_{1}+1)\gamma.
\end{eqnarray}
By combining \myref{est-a-0},\myref{est-a-1} and \myref{est-a-2}, we obtain a complete control of $\ho{a}$,
\begin{eqnarray}
|\widehat{a}^m_{\theta^m}(\omega)| & \le & 4C_{0}Q\left(\frac{|\omega|}{2}\right)^{-n}M_{0}^{n}(2M_{1}+1)\gamma, \quad \forall |\omega|\ge L/2.
\end{eqnarray}
Similarly, we can estimate $\widehat{b}^m_{\theta^m}$ by the same upper bound,
\begin{eqnarray}
|\widehat{b}^m_{\theta^m}(\omega)| & \le & 4C_{0}Q\left(\frac{|\omega|}{2}\right)^{-n}M_{0}^{n}(2M_{1}+1)\gamma, \quad \forall |\omega|\ge L/2.
\end{eqnarray}

\vspace{3mm}
\newpage
\noindent
\textbf{Appendix C: Proof of Lemma \ref{lemma-integral-dtheta}}
\begin{proof}
Since $e^{i2\pi k\phi}$ is a periodic
function over $[0,1]$, it can be represented by Fourier series:
\begin{eqnarray}
e^{i2\pi k\phi(t)}=\sum_{l=-\infty}^{+\infty}d_{l}e^{i2\pi lt},\quad t\in[0,1],
\end{eqnarray}
 where $d_{l}=\int_{0}^{1}e^{i2\pi k\phi(t)}e^{-i2\pi lt}dt$.
By assumption, we have $\phi'(t)\in V_{M_{0}}$. Thus,  we get 
\begin{eqnarray}
\phi'(t)=\sum_{j=-M_{0}}^{M_{0}}c_{j}e^{i2\pi jt},\quad t\in[0,1],
\end{eqnarray}
 where $c_{j}=\int_{0}^{1}\ot'(t)e^{-i2\pi jt}dt$.

Then, we have 
\begin{eqnarray}
 &  & \frac{1}{L}\sum_{m=0}^{L-1}\phi'(t_{m})e^{i2\pi k\phi(t_{m})}\nonumber \\
 & = & \frac{1}{L}\sum_{m=0}^{L-1}\sum_{j=-M_{0}}^{M_{0}}\sum_{l=-\infty}^{+\infty}c_{j}d_{l}e^{i2\pi(l+j)t_{m}}\nonumber \\
 & = & \frac{1}{L}\sum_{j=-M_{0}}^{M_{0}}\sum_{l=-\infty}^{+\infty}c_{j}d_{l}\sum_{m=0}^{L-1}e^{i2\pi(l+j)m/L}\nonumber \\
 & = & \sum_{j=-M_{0}}^{M_{0}}\sum_{p\in\mathbb{Z}}c_{j}d_{pL-j}\nonumber \\
 & = & \sum_{j=-M_{0}}^{M_{0}}c_{j}d_{-j}+\sum_{j=-M_{0}}^{M_{0}}\sum_{p\in\mathbb{Z},p\ne0}c_{j}d_{pL-j}\nonumber \\
 & = & \int_{0}^{1}\ot'(t)e^{i2\pi k\phi(t)}dt+\sum_{j=-M_{0}}^{M_{0}}\sum_{p\in\mathbb{Z},p\ne0}c_{j}d_{pL-j}\nonumber \\
 & = & \sum_{j=-M_{0}}^{M_{0}}\sum_{p\in\mathbb{Z},p\ne0}c_{j}d_{pL-j}.
\end{eqnarray}

Using integration by parts, we have 
\begin{eqnarray}
|d_{l}| & = & |\int_{0}^{1}e^{i2\pi k\phi}e^{-i2\pi lt}dt|\nonumber \\
 & = & \frac{1}{|l|^{n}}\left|\int_{0}^{1}\left(\frac{d^{n}}{dt^{n}}e^{i2\pi k\phi}\right)e^{-i2\pi lt}dt\right|\nonumber \\
 & \le & \frac{1}{|l|^{n}}\int_{0}^{1}\left|\left(\frac{d^{n}}{dt^{n}}e^{i2\pi k\phi}\right)\right|dt\nonumber \\
 & \le & \frac{1}{|l|^{n}}\max_{t}\left|\left(\frac{d^{n}}{dt^{n}}e^{i2\pi k\phi}\right)\right|.
\end{eqnarray}
Using the inequality \myref{control-g} in the proof of Lemma \ref{lemma-fft},
and by a direct calculation, we can show that for any $n>0$, 
there exists $C(n)>0$, such that 
\begin{eqnarray}
\max_{t}\left|\left(\frac{d^{n}}{dt^{n}}e^{i2\pi k\phi}\right)\right| & \le & C(n)\sum_{j=1}^{n}|k|^{j}M_{0}^{n-j}\|\widehat{\phi'}\|_{1}^{j}
=C(n)|k|M_{0}^{n-1}\|\widehat{\phi'}\|_{1}\frac{\frac{|k|^{n}}{M_{0}^{n}}\|\widehat{\phi'}\|_{1}^{n}-1}{\frac{|k|}{M_{0}}\|\widehat{\phi'}\|_{1}-1}\nonumber \\
 & \le & \left\{ \begin{array}{cc}
2C(n)|k|^{n}\|\widehat{\phi'}\|_{1}^{n}, & \frac{|k|}{M_{0}}\|\widehat{\phi'}\|_{1}>2,\\
2C(n)(2M_{0})^{n}, & \frac{|k|}{M_{0}}\|\widehat{\phi'}\|_{1}\le2.
\end{array}\right.
\end{eqnarray}
As a result, we obtain
\begin{eqnarray}
|d_{l}|\le\left\{ \begin{array}{cc}
2C(n)\left|\frac{k\|\widehat{\phi'}\|_{1}}{l}\right|^{n}, & |k|\|\widehat{\phi'}\|_{1}>2M_{0},\\
2C(n)\left|\frac{2M_{0}}{l}\right|^{n}, & |k|\|\widehat{\phi'}\|_{1}\le2M_{0},
\end{array}\right.
\end{eqnarray}
Finally, we derive the following estimate 
\begin{eqnarray}
 &  & \left|\sum_{j=-M_{0}}^{M_{0}}\sum_{p\in\mathbb{Z},p\ne0}c_{j}d_{pL-j}\right|\nonumber \\
 & \le & \sum_{p\in\mathbb{Z},p\ne0}\sum_{j=-M_{0}}^{M_{0}}|c_{j}||d_{pL-j}|\nonumber \\
 & \le & 2\sum_{j=-M_{0}}^{M_{0}}|c_{j}|\sum_{p=1}^{+\infty}\max_{j}|d_{pL-j}|\nonumber \\
 & \le & 4C(n)\|\widehat{\phi'}\|_{1}\sum_{p=1}^{+\infty}\max\left(\left|\frac{k\|\widehat{\phi'}\|_{1}}{pL-M_{0}}\right|^{n},\left|\frac{2M_{0}}{pL-M_{0}}\right|^{n}\right)\nonumber \\
 & \le & 4C(n)\|\widehat{\phi'}\|_{1}\max\left(\left|\frac{k\|\widehat{\phi'}\|_{1}}{L}\right|^{n},\left|\frac{2M_{0}}{L}\right|^{n}\right)\sum_{p=1}^{+\infty}(p-M_{0}/L)^{-n}\nonumber \\
 & \le & 4\left(1-M_{0}/L\right)^{-n+1}\frac{C(n)}{n-1}\|\widehat{\phi'}\|_{1}\max\left(\left|\frac{k\|\widehat{\phi'}\|_{1}}{L}\right|^{n},\left|\frac{2M_{0}}{L}\right|^{n}\right).
\end{eqnarray}
\end{proof}

\end{document}